\documentclass[11pt]{amsart}
\usepackage[utf8]{inputenc} 
\usepackage[british]{babel} 
\usepackage{culmus} 
\usepackage{dsfont}

\begin{hyphenrules}{british}
\end{hyphenrules}

\usepackage{graphicx} 
\usepackage{booktabs} 
\usepackage{array} 
\usepackage{paralist} 
\usepackage{verbatim} 
\usepackage{subfig} 
\usepackage{scalerel,stackengine} 

\usepackage{bmpsize}
\usepackage{amssymb}
\usepackage{mathtools}
\usepackage{amsmath}	
\usepackage{amssymb}
\usepackage{amsthm}
\usepackage[mathcal]{eucal}
\usepackage{graphicx}
\usepackage{faktor} 
\usepackage{mathrsfs} 
\usepackage[all, cmtip]{xy} 
\usepackage{systeme}
\usepackage[titletoc]{appendix}
\usepackage{leftindex}
\usepackage{enumitem}
\usepackage{appendix}

\swapnumbers

\theoremstyle{plain}
\newtheorem{theorem}[subsubsection]{Theorem}
\newtheorem{lemma}[subsubsection]{Lemma}
\newtheorem{proposition}[subsubsection]{Proposition}
\newtheorem{corollary}[subsubsection]{Corollary}

\theoremstyle{definition}
\newtheorem{definition}[subsubsection]{Definition}

\theoremstyle{remark}

\newtheorem{examples}[subsubsection]{Examples}
\newtheorem{remark}[subsubsection]{Remark}
\newtheorem{remarks}[subsubsection]{Remarks}
\newtheorem{remarks-examples}[subsubsection]{Remarks and Examples}

\stackMath
\newcommand\reallywidehat[1]{%
\savestack{\tmpbox}{\stretchto{%
  \scaleto{%
    \scalerel*[\widthof{\ensuremath{#1}}]{\kern-.6pt\bigwedge\kern-.6pt}%
    {\rule[-\textheight/2]{1ex}{\textheight}}
  }{\textheight}%
}{0.5ex}}%
\stackon[1pt]{#1}{\tmpbox}%
}
\newcommand{\pullback}[4]{{#1 \leftindex_{#2}{\times}_{#3} #4}}
\newcommand{\R}[1]{R_{\mathbb{E}}}
\renewcommand{\P}[1]{P_{\mathbb{E}}}
\newcommand{\la}{\langle}
\newcommand{\ra}{\rangle}
\newcommand{\un}{\underline}
\usepackage{color}

\def\pullback{
 \ar@{-}[]+R+<4pt,-3pt>;[]+RD+<4pt,-6pt>%
 \ar@{-}[]+D+<3pt,-6pt>;[]+RD+<4pt,-6pt>}

\DeclareSymbolFont{alephbet}{HE8}{frank}{m}{n}
\SetSymbolFont{alephbet}{bold}{HE8}{frank}{b}{n}
\DeclareMathSymbol{\samech}{\mathord}{alephbet}{"F1}
\DeclareMathSymbol{\mem}{\mathord}{alephbet}{"EE}

\setlist[itemize]{label=$-$} 

\usepackage{chngcntr}
\counterwithin*{equation}{section}

\def\pullback{
 \ar@{-}[]+R+<4pt,-3pt>;[]+RD+<4pt,-6pt>%
 \ar@{-}[]+D+<3pt,-6pt>;[]+RD+<4pt,-6pt>}

\begin{document}

\title[A direction functor approach to the cohomology of small categories]{A direction functor approach to the cohomology of small categories}

\author[S.~Ambra]{Stefano Ambra}
\author[A.~Duvieusart]{Arnaud Duvieusart}
\author[A.~Montoli]{Andrea Montoli}

\email{stefano.ambra@unimi.it}
\email{arnaud.duvieusart@uclouvain.be}
\email{andrea.montoli@unimi.it}

\address[Stefano Ambra]{Dipartimento di Matematica ``Federigo Enriques'', Universit\`{a} degli Studi di Milano, Via Saldini 50, 20133 Milano, Italy}
\address[Arnaud Duvieusart]{Institut de Recherche en Mathématique et Physique, Université catholique de Louvain, Chemin du Cyclotron 2, 1348 Louvain-la-Neuve, Belgium}
\address[Andrea Montoli]{Dipartimento di Matematica ``Federigo Enriques'', Universit\`{a} degli Studi di Milano, Via Saldini 50, 20133 Milano, Italy}

\begin{abstract}
We show how the direction functors can be used to develop a cohomology theory for Barr-exact and $S$-Mal'tsev categories, where $S$ is a suitable class of split epimorphisms with a fixed section. Using the fact that, for any set $B$, the category of small categories with $B$ as set of object is $S$-Mal'tsev with respect to the class of Schreier points, we show that the cohomology theory of small categories arising from the direction functors coincides with the one introduced by Hoff and Golasi\'{n}ski.
\end{abstract}

\subjclass[2020]{18E13, 
	18E99, 
	08C05
}
\keywords{Direction functor, $S$-protomodular category, $S$-Mal'tsev category, Schreier point, cohomology of small categories}

\maketitle

\section{Introduction}
The cohomology theory for groups, introduced in \cite{EML}, can be described in terms of suitable $n$-extensions, as shown in \cite{holt, huebschmann}. A categorical explanation of this fact can be obtained by means of the so-called \emph{direction functors}, first introduced in \cite{baer-sums} in dimension one, and then extended to higher dimensions in \cite{bourn-rodelo}. The first-dimensional direction functor associates with any object with global support and equipped with an internal Mal'tsev operation, in a Barr-exact \cite{barr} category $\mathcal{C},$ an internal group in $\mathcal{C}.$ If $\mathcal{C}$ is a Mal'tsev \cite{carboni} category, this group is necessarily abelian. When $\mathcal{C}$ is a slice category $\mathbf{Gp}/G,$ where $\mathbf{Gp}$ is the category of groups, the objects with global support and admitting an internal Mal'tsev operation are precisely the surjective group homomorphisms with abelian kernel, and the direction functor associates with any such morphism $f$ the split extension corresponding to the action induced by $f$ on its kernel. The fibres of the direction functors inherit from the objects of the codomain category a canonical abelian group structure, which coincides with the Eilenberg-Mac Lane second cohomology group. Higher dimensional direction functors, where extensions with abelian kernels are replaced by suitable internal groupoids, equip their fibres with abelian group structures that are isomorphic to the higher cohomology groups. Applying the same strategy to other algebraic categories, like associative algebras or Lie algebras, gives a description of other known cohomology theories.

On the other hand, in recent years there has been interest in the study of monoids (and, more generally, of monoid-like structures) from a categorical point of view. Indeed, it has been observed that the category of monoids, which is not a Mal'tsev category, retains most of the good algebraic and homological properties of the category of groups, when the attention is restricted to a suitable class of points (by point we mean a split extension with a fixed section), called Schreier points \cite{MMS13, schreier_book}. This led to the notions of $S$-protomodular \cite{S-protomodular} and $S$-Mal'tsev \cite{mal'tsev-reflection} category, with respect to a pullback-stable class of points. Examples of such categories, beside monoids, are \emph{monoids with operations} \cite{MMS13} (which include semirings, semilattices, distributive lattices with a bottom element or a top one...), quandles (that are algebraic structures used in the classification of knots), and small categories with a fixed set of objects.

The aim of this paper is to show how the direction functors can be considered in the context of Barr-exact and $S$-Mal'tsev categories, in such a way as to yield a cohomology theory for all the above structures. Moreover, by applying these functors to the case of the category $\mathbf{Cat}_B$ of small categories with fixed set of objects $B,$ we obtain a categorical description of the cohomology theory for small categories introduced in \cite{cocat} and in \cite{golasinski}.

The paper is organized as follows. In Section \ref{section direction functors} we recap from \cite{baer-sums, bourn-rodelo} the general constrution and the main properties of the direction functors. In Section \ref{section S-protomodularity} we recall the definitions of $S$-protomodular and $S$-Mal'tsev categories, and study in detail the example which is more relevant for us, namely the one of the category $\mathbf{Cat}_B.$ In Section \ref{section second cohomology CatB} we apply the first-dimensional direction functor to $\mathbf{Cat}_B,$ and provide an explicit description of the resulting cohomology groups. Eventually, in Section \ref{section higher cohomology} we do the same for the higher dimensional direction functors, giving a description of cohomology groups, in every dimension, by means of suitable extensions.

\section{Direction functors and Cohomology} \label{section direction functors}
\label{sec:direction}
\subsection{Internal Mal'tsev algebras and naturally Mal'tsev categories}
Let $\mathcal{C}$ be a category with finite products. Recall that an \emph{internal Mal'tsev algebra} (cf.~\cite{smith}) in $\mathcal{C}$ is a pair $(X,p_X),$ where $X$ is an object of $\mathcal{C}$ and $p_X\colon X^3=X\times X\times X\rightarrow X$ is an internal ternary operation satisfying the Mal'tsev identities $p_X(x,x,y)=y$ and $p_X(x,y,y)=x.$ We say that $p_X$ is \emph{associative} when the equation $p_X\big(x,y,p_X(z,u,v)\big)=p_X\big(p_X(x,y,z),u,v\big)$ is also satisfied, and in this case we call $(X,p_X)$ an (internal) associative Mal'tsev algebra. Such objects, together with the morphisms $f\colon(X,p_X)\rightarrow(Y,p_Y),$ where $f\colon X\rightarrow Y$ is a morphism in $\mathcal{C}$ such that the square
\[
\xymatrix{
{X^3} \ar[d]_-{f^3=f\times f\times f} \ar[r]^-{p_X} &X \ar[d]^-f \\
{Y^3} \ar[r]_-{p_Y} &Y
}
\]
commutes, make for a category which we denote by $\mathrm{Mal}(\mathcal{C}).$

If $(X,p_X)$ is an associative Mal'tsev algebra and a morphism $x_0\colon1\rightarrow X$ exists in $\mathcal{C},$ $1$ being a terminal object, the binary operation $x\cdot y=p_X(x,x_0,y)$ endows $X$ with the structure of an internal group in $\mathcal{C},$ whose neutral element is $x_0$ and where $x^{-1}=p_X(x_0,x,x_0);$ this group is abelian as soon as the Mal'tsev operation $p_X$ is \emph{commutative}, i.e. when $p_X(x,y,z)=p_X(z,y,x)$ holds. Conversely, any internal group $(G,\cdot,e)$ in $\mathcal{C}$ determines a canonical associative Mal'tsev operation $p_G(x,y,z)=x\cdot y^{-1}\cdot z$ on $G,$ which is commutative if and only if the group is abelian. Thus, the associative Mal'tsev algebras in $\mathcal{C}$ play the role of non-pointed internal groups and the associative and commutative Mal'tsev algebras play the role of non-pointed internal abelian groups.

It is proven in \cite{baer-sums} (see also \cite{johnstone}) that a Mal'tsev operation $p_X$ is both associative and commutative if and only if it commutes with itself, i.e. if and only if $p_X$ is a morphism of Mal'tsev algebras:
\begin{equation*}
\begin{split}
p_X&\big(p_X(x_1,y_1,z_1),p_X(x_2,y_2,z_2),p_X(x_3,y_3,z_3)\big)\\&=p_X\big(p_X(x_1,x_2,x_3),p_X(y_1,y_2,y_3),p_X(z_1,z_2,z_3)\big).
\end{split}
\end{equation*}
Following \cite{linton}, $p_X$ is called \emph{autonomous} in this case. We shall denote by $\mathrm{AMal}(\mathcal{C})\subseteq\mathrm{Mal}(\mathcal{C})$ the full subcategory of autonomous Mal'tsev algebras.

If $\mathcal{C}$ is a Mal'tsev \cite{carboni} category, every object $X\in\mathcal{C}$ admits at most one internal Mal'tsev operation, which is in this case necessarily autonomous (see \cite{borceux-bourn}, Propositions $2.3.3$ and $2.3.7$), so that $\mathrm{Mal}(\mathcal{C})=\mathrm{AMal}(\mathcal{C}).$ In this context, internal Mal'tsev algebras are also called \emph{abelian objects}, see \cite{bourn-abelian}.

Recall also from \cite{johnstone} that the category $\mathcal{C}$ is \emph{naturally Mal'tsev} if there exists a natural transformation $p\colon()^3\xlongrightarrow{\cdot} Id_{\mathcal{C}}$ from the functor $X\mapsto X^3,$ $f\mapsto f^3,$ to the identity functor of $\mathcal{C},$ such that for all $X\in\mathcal{C}$ the component $p_X$ of $p$ in $X$ is a Mal'tsev operation. The naturality of $p$ then forces $p_X$ to be autonomous, so that $\mathcal{C}$ is naturally Mal'tsev if and only if $\mathcal{C}=\mathrm{AMal}(\mathcal{C}).$ Every naturally Mal'tsev category is a Mal'tsev category (see \cite{bourn96}, Proposition 17).

Much as the autonomous Mal'tsev algebras are the non-pointed analogues of internal abelian groups, naturally Mal'tsev categories provide for a non-pointed analogue of additive categories, since one can prove that a category with finite products $\mathcal{C}$ is additive if and only if it is naturally Mal'tsev and pointed (\cite{johnstone}, p.~255).
\begin{remarks}
\label{ex:naturally_maltsev}
\begin{enumerate}
\item Let $(G,\cdot,e)$ be an internal group in a category $\mathcal{C}$ with finite products, and suppose $e^\prime\colon1\rightarrow G$ is some other morphism. Then, if $\cdot^\prime$ is defined by $x\cdot^\prime y=p_G(x,e^\prime,y)$ (with the above notation), the groups $(G,\cdot,e)$ and $(G,\cdot^\prime,e^\prime)$ are isomorphic in the category $\mathrm{Gp}(\mathcal{C})$ of internal groups in $\mathcal{C}$ via $(G,\cdot,e)\rightarrow(G,\cdot^\prime,e^\prime),$ $x\mapsto p_G(e^\prime,e,x).$

\item If $\mathcal{D}$ is a Mal'tsev category, for any $Y\in\mathcal{D}$ the slice category $\mathcal{D}/Y$ is again Mal'tsev (\cite{borceux-bourn}, Example $2.2.14$), so that $\mathcal{C}=\mathrm{Mal}(\mathcal{D}/Y)$ is naturally Mal'tsev: in view of the study of cohomology by means of the so-called \emph{direction functors} (\cite{baer-sums, bourn-rodelo} - see below), particularly interesting instances of this situation occur when:
\begin{itemize}
\item $\mathcal{D}$ is additive, in which case $\mathcal{D}/Y=\mathrm{Mal}(\mathcal{D}/Y)=\mathrm{AMal}(\mathcal{D}/Y)$ is itself naturally Mal'tsev (\cite{borceux-bourn}, Corollary $2.4.11$ - see also \cite{bourn-rodelo});
\item $\mathcal{D}$ is one of the categories $\mathbf{Gp}$ of groups, $\mathbf{GpTop}$ of topological groups, $\mathbf{GpHaus}$ of Hausdorff groups, $\mathbf{Lie}_R$ of Lie algebras over a (commutative) ring $R$: an object in $\mathrm{Mal}(\mathcal{D}/Y)$ is in this case a group homomorphism with codomain $Y$ and abelian kernel when $\mathcal{D}=\mathbf{Gp},$ a continuous group homomorphism with codomain $Y$ and abelian kernel when $\mathcal{D}=\mathbf{GpTop}$ or $\mathcal{D}=\mathbf{GpHaus},$ a Lie-homomorphism with codomain $Y$ and kernel with trivial Lie bracket when $\mathcal{D}=\mathbf{Lie}_R$ (see \cite{bourn-rodelo});
\end{itemize}
\item If $\mathcal{D}$ is an $S$-Mal'tsev category, for any $Y\in\mathcal{D}$ the full subcategory of $S$-special morphisms $Sl(\mathcal{D}/Y)\subset \mathcal{D}/Y$ is a Mal'tsev category \cite{schreier_book, S-protomodular}, so that $\mathcal{C}=\mathrm{Mal}(Sl(\mathcal{D}/Y))$ is naturally Mal'tsev: this is the case that we shall study in the present work.
\end{enumerate}
\end{remarks}
\subsection{The direction of an associative Mal'tsev algebra and of an $n$-groupoid}
Suppose now that $\mathcal{C}$ is (finitely complete and) Barr-exact \cite{barr}, and fix an associative Mal'tsev algebra $(X,p_X).$ Then, by setting $(x,t)\sim(y,z)$ if and only if $t=p_X(x,y,z),$ one defines an internal equivalence relation
\begin{equation}
\label{eqn:Chasles}
\xymatrixcolsep{2.5pc}
\xymatrix{
{X\times X\times X} \ar@<.5ex>[r]^-{\langle \pi_1,p_X\rangle} \ar@<-.5ex>[r]_-{\pi_{2,3}} &{X\times X}
}
\end{equation}
on $X\times X,$  where $\pi_1$ and $\pi_{2,3}$ denote the projections on the first factor and on the second and third factors, respectively.

Following \cite{baer-sums}, we shall call \eqref{eqn:Chasles} the \emph{Chasles relation} associated with $p_X,$ though a similar construction was considered in \cite{kock} under the name of \emph{geometric} relation.

By the Barr-exactness of $\mathcal{C},$ the relation \eqref{eqn:Chasles} is effective and admits a quotient $\big(d(X,p_X),\rho\big)$:
\begin{equation*}
\xymatrixrowsep{2.5pc}
\xymatrixcolsep{2.5pc}
\xymatrix{
{X\times X\times X} \ar[d]_-{p_{1,2}} \ar@{}[rd]|{(a)} \ar@<.5ex>[r]^-{\langle \pi_1,p_X\rangle} \ar@<-.5ex>[r]_-{\pi_{2,3}} &{X\times X} \ar[d]^-{\pi_1} \ar@{->>}[r]^-{\rho} \ar@{}[rd]|{(b)} &{d(X,p_X)} \ar[d] \\
{X\times X} \ar@<.5ex>[r]^-{\pi_1} \ar@<-.5ex>[r]_-{\pi_2} &{X} \ar[r]_-{\tau_X} &{1.}
}
\end{equation*}
Since the two commutative squares $(a)$ are pullbacks, the so-called Barr-Kock theorem (\cite{barr}, Example III.6.10) guarantees that the square $(b)$ is also a pullback, meaning that $X\times X\cong X\times d(X,p_X).$

The main point, now, is that when the terminal map $\tau_X$ is a regular epimorphism - a fact which we express by saying that $X$ has \emph{global support} (\cite{tower}, p.~144) - the quotient $d(X,p_X)$ is an \emph{internal group} in $\mathcal{C}.$ In this respect, we take a diversion from the path outlined in \cite{baer-sums}, choosing to stress the point of view of groups as \emph{pointed} associative Mal'tsev algebras, and to derive the group structure on $d(X,p_X)$ from the following:
\begin{proposition}
If $(X,p_X)\in\mathrm{Mal}(\mathcal{C})$ (resp., $(X,p_X)\in\mathrm{AMal}(\mathcal{C})$), the quotient $d(X,p_X)$ inherits an associative (resp., autonomous) Mal'tsev algebra structure.
\end{proposition}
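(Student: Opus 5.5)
Since $\rho\colon X\times X\to d(X,p_X)$ is a regular epimorphism in the Barr-exact category $\mathcal{C}$, and regular epimorphisms are stable under finite products in a regular category, the map $\rho^3\colon (X\times X)^3\to d(X,p_X)^3$ is again a regular epimorphism. Moreover, it is the coequalizer of its kernel pair, which is the product of three copies of the Chasles relation \eqref{eqn:Chasles}. The plan is therefore to define an operation $q$ on $X\times X$ that is compatible with the Chasles relation, and then descend it along $\rho^3$. Internally, elements of $d(X,p_X)$ are classes $[x,t]$, with $(x,t)\sim(y,p_X(y,x,t))$; equivalently, $(x,t)\sim(y,z)$ iff $t=p_X(x,y,z)$. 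I take on $X\times X$ the componentwise Mal'tsev operation
\[
q\big((x_1,t_1),(x_2,t_2),(x_3,t_3)\big)=\big(p_X(x_1,x_2,x_3),\,p_X(t_1,t_2,t_3)\big),
\]
that is, $q=p_X\times p_X$ composed with the obvious shuffle. This is the product Mal'tsev structure, so it is associative (resp.\ autonomous) whenever $p_X$ is.

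The key step is to show that $q$ preserves the Chasles relation. The relation is a subobject of $(X\times X)^2$, so I need $q^{\,2}$ restricted to $R^3$ to factor through $R$. Equivalently, if $t_i=p_X(x_i,y_i,z_i)$ for $i=1,2,3$, then
\[
p_X(t_1,t_2,t_3)=p_X\big(p_X(x_1,x_2,x_3),p_X(y_1,y_2,y_3),p_X(z_1,z_2,z_3)\big).
\]
This is exactly the autonomy identity, so in the autonomous case it holds immediately. The main obstacle is the merely associative case, where $p_X$ need not commute with itself and $q$ need not preserve $R$. There I would not use the componentwise $q$. Instead I would choose an operation acting on differences: $[x,t]$ represents ``$x^{-1}t$''. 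So I would set
\[
q\big((x_1,t_1),(x_2,t_2),(x_3,t_3)\big)=\big(x_1,\;p_X(t_1,\,p_X(x_2,x_1,t_2)\ldots)\big),
\]
or, more cleanly, first use the Barr--Kock pullback $(b)$ to rewrite every triple of classes in terms of representatives with a common first coordinate. Concretely, the object of triples $(x,t_1,t_2,t_3)$ maps to $d(X,p_X)^3$ by a regular epimorphism, since it is a pullback of $\rho^3$ along the diagonal-type map and $\tau_X$ is... In that case I define $(x,t_1,t_2,t_3)\mapsto (x,p_X(t_1,t_2,t_3))$. Compatibility with changing the common base point $x\mapsto y$, $t_i\mapsto p_X(y,x,t_i)$, then follows from associativity and the Mal'tsev identities alone:
\[
p_X\big(p_X(y,x,t_1),p_X(y,x,t_2),p_X(y,x,t_3)\big)=p_X(y,x,p_X(t_1,t_2,t_3)).
\]
I expect this associativity juggling, together with making the ``common representative'' presentation an honest regular epimorphism, to be the delicate part.

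Once the operation is well defined on $d(X,p_X)$, the Mal'tsev identities, associativity, and, where relevant, autonomy are equational. They hold upstairs, and since $\rho^3$ and $\rho^5$ (or $\rho^9$ for autonomy) are epimorphisms, they descend to $d(X,p_X)$.
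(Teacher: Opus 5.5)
Your route differs from the paper's, which treats both cases uniformly. It takes arbitrary representatives and uses the single map $\varphi(a,b,x,y,u,v)=(a,p_X(b,y,p_X(x,u,v)))$. It then checks, via $p_X(x,p_X(y,z,t),u)=p_X(p_X(x,t,z),y,u)$, that $\rho\varphi$ coequalizes $\langle\pi_1,p_X\rangle^3$ and $\pi_{2,3}^3$, and descends along $\rho^3$.

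That is in fact the formula you began and dropped. Rebasing $(x_i,t_i)$ at $x_1$ gives $(x_1,p_X(x_1,x_i,t_i))$; your draft has the first two arguments swapped. Moreover, $p_X(t_1,p_X(x_1,x_2,t_2),p_X(x_1,x_3,t_3))$ reduces to $p_X(t_1,t_2,p_X(x_2,x_3,t_3))$.

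Your componentwise operation for the autonomous case is correct, since compatibility with the relation is literally the autonomy identity, and it is quicker there. Your common-base-point construction is also correct: the rebasing identity does follow from associativity and the Mal'tsev laws. It has the merit that every axiom is checked in the second coordinate with a fixed base point. The price is that you need a regular epimorphism other than $\rho^3$, whereas the paper only needs that $\rho^3$ is a coequalizer. Your construction also covers the autonomous case, so the componentwise detour is optional.

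The gap is the step you flag: showing that $e\colon X^4\to d(X,p_X)^3$, $(x,t_1,t_2,t_3)\mapsto(\rho(x,t_1),\rho(x,t_2),\rho(x,t_3))$, is a regular epimorphism. Your sketch does not establish it. $X^4$ is a pullback of $\pi_1^3$ (not of $\rho^3$) along the diagonal $X\to X^3$. Finishing with ``$\tau_X$ is a regular epimorphism'' would import global support, which the proposition does not assume; the paper adds it only in the subsequent corollary.

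Use the Barr--Kock pullback $(b)$ instead.
\begin{itemize}
\item The map $\langle\pi_1,\rho\rangle\colon X\times X\to X\times d(X,p_X)$ is an isomorphism over $X$. Fibring three copies over $X$ gives $X^4\cong X\times d(X,p_X)^3$.
\item Under this isomorphism, $e$ becomes $\pi\times 1$, where $\pi\colon X\times d(X,p_X)\to d(X,p_X)$ is the projection.
\item Since $\pi=\rho\langle\pi_1,\rho\rangle^{-1}$ is a regular epimorphism and $\pi\times 1$ is a pullback of it, $e$ is a regular epimorphism.
\end{itemize}

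Because $\mathrm{Eq}(\rho)$ is the Chasles relation, the kernel pair of $e$ is exactly your change-of-base-point relation. So your identity is precisely the coequalizing condition. Finally, in the associative case the axioms must be descended along the analogous maps $X^{n+1}\to d(X,p_X)^n$, not along $\rho^n$, because your operation is only specified on common-base-point representatives.
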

\begin{proof}
By \cite{barr}, Proposition I.2.15, $\rho^3$ is a coequalizer of $\langle \pi_1,p_X \rangle^3$ and ${\pi_{2,3}}^3.$ Consider the morphism
\[
\varphi\colon X^6\rightarrow X^2, \ (a,b,x,y,u,v)\mapsto\Big(a,p_X\big(b,y,p_X(x,u,v)\big)\Big).
\]
Using the equality
\[
p_X\big(x,p_X(y,z,t),u\big)=p_X\big(p_X(x,t,z),y,u\big),
\]
valid for any associative Mal'tsev operation $p_X$ (see \cite{baer-sums}, Corollary 2), one proves that the composition $\rho\varphi\colon X^6\rightarrow d(X,p_X)$ coequalizes $\langle \pi_1,p_X \rangle^3$ and ${\pi_{2,3}}^3,$ so that a unique
\begin{equation}
\label{eqn:induced_maltsev_operation}
p_{d(X,p_X)}\colon d(X,p_X)^3\rightarrow d(X,p_X)
\end{equation}
is induced, satisfying $\rho\varphi=p_{d(X,p_X)}\rho^3.$ A routine computation then shows that \eqref{eqn:induced_maltsev_operation} is an associative Mal'tsev operation on $d(X,p_X),$ which is also commutative if so is $p_X.$
\end{proof}

\begin{corollary}
If $(X,p_X)\in\mathrm{Mal}(\mathcal{C})$ and $X$ has global support, $d(X,p_X)$ admits a group structure in $\mathcal{C},$ which is abelian if $p_X$ is autonomous.
\end{corollary}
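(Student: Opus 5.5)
The plan is to combine the preceding Proposition with the observation, made at the start of this section, that an associative Mal'tsev algebra equipped with a global element $x_0\colon 1\rightarrow X$ is an internal group. By the Proposition, $d(X,p_X)$ carries an associative Mal'tsev operation $p_{d(X,p_X)}$, which is autonomous when $p_X$ is. So it suffices to produce a morphism $1\rightarrow d(X,p_X)$. The group structure will then be $u\cdot v=p_{d(X,p_X)}(u,e,v)$ with neutral element $e$, and it is abelian in the autonomous case, since commutativity of the Mal'tsev operation gives commutativity of the product.

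The natural candidate for $e$ is the class of the diagonal. Consider $\rho\langle 1_X,1_X\rangle\colon X\rightarrow d(X,p_X)$; informally $x\mapsto[(x,x)]$. I will show that it factors through $\tau_X\colon X\rightarrow 1$. Since $X$ has global support, $\tau_X$ is a regular epimorphism, and it is the coequalizer of its kernel pair $\pi_1,\pi_2\colon X\times X\rightarrow X$. It is therefore enough to check that
\[
\rho\langle 1_X,1_X\rangle\pi_1=\rho\langle 1_X,1_X\rangle\pi_2 .
\]
In elements, this asks that $(x,x)\sim(y,y)$ under the Chasles relation. By definition, $(x,t)\sim(y,z)$ holds iff $t=p_X(x,y,z)$, and indeed $p_X(x,y,y)=x$. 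Internally, the pair $\langle\pi_1,\pi_1\rangle,\langle\pi_2,\pi_2\rangle$ factors through the relation \eqref{eqn:Chasles} via $\langle\pi_1,\pi_2,\pi_2\rangle\colon X^2\rightarrow X^3$, using the Mal'tsev identity $p_X\langle\pi_1,\pi_2,\pi_2\rangle=\pi_1$. Because $\rho$ coequalizes the two legs of the relation, the required equality follows. This yields a unique $e\colon 1\rightarrow d(X,p_X)$ with $e\tau_X=\rho\langle 1_X,1_X\rangle$.

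Finally, I apply the earlier remark to the associative Mal'tsev algebra $(d(X,p_X),p_{d(X,p_X)})$ with the point $e$. This gives an internal group with inverse $p_{d(X,p_X)}(e,u,e)$. In the autonomous case $p_{d(X,p_X)}$ is commutative, so the group is abelian. The main (mild) obstacle is the factorization step: making sure that the global support hypothesis is used correctly, namely that in a regular category $\tau_X$ is the coequalizer of its kernel pair $X\times X$. Everything else follows from the Proposition.
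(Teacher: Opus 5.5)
Your proposal is correct and follows the paper's own argument. The paper likewise obtains the unit $u_X\colon 1\rightarrow d(X,p_X)$ by factoring $\rho\Delta_X$ through $\tau_X$, viewed as the coequalizer of the indiscrete relation $\pi_1,\pi_2\colon X\times X\rightarrow X$, using the same map $\langle 1_{X\times X},\pi_2\rangle=\langle\pi_1,\pi_2,\pi_2\rangle$ into the Chasles relation. It then concludes that the pointed associative Mal'tsev algebra $d(X,p_X)$ is an internal group, which is abelian in the autonomous case.
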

Indeed, when $\tau_X$ is a regular epimorphism, the universal property of $\tau_X$ as a coequalizer of the indiscrete relation $\xymatrix{{X\times X} \ar@<.5ex>[r]^-{\pi_1} \ar@<-.5ex>[r]_-{\pi_2} &{X}}$ guarantees the existence of a map $u_X$ making the upward square in the right-hand side of
\begin{equation*}
\xymatrixcolsep{3.5pc}
\xymatrix{
{X\times X\times X} \ar@<-.5ex>[d]_-{\pi_{1,2}} \ar@<.5ex>[r]^-{\langle \pi_1,p_X\rangle} \ar@<-.5ex>[r]_-{\pi_{2,3}} &{X\times X} \ar@<-.5ex>[d]_-{\pi_1} \ar@{->>}[r]^-{\rho} &{d(X,p_X)} \ar@<-.5ex>[d] \\
{X\times X} \ar@<-.5ex>[u]_-{\la 1_{X\times X},\pi_2 \ra} \ar@<.5ex>[r]^-{\pi_1} \ar@<-.5ex>[r]_-{\pi_2} &X \ar@<-.5ex>[u]_-{\Delta_X=\langle 1_X,1_X\rangle} \ar@{->>}[r]_-{\tau_X} &{1} \ar@{.>}@<-.5ex>[u]_-{u_X}
}
\end{equation*}
commute. Then $d(X,p_X)$ is a pointed associative Mal'tsev algebra, and hence an internal group in $\mathcal{C},$ with multiplication $\rho(a,b)\cdot \rho(u,v)=\rho\big(a,p_X(b,u,v)\big)$ induced by \eqref{eqn:induced_maltsev_operation}.

\begin{remark}
Having global support is a necessary condition on $X$ for $d(X,p_X)$ to be a group, for if the latter holds, then the terminal map $d(X,p_X)\rightarrow 1$ is a split epimorphism, and thus, in the commutative diagram
\[
\xymatrix{
{X\times X} \ar[d]_-{\pi_1} \ar@{->>}[r]^-{\rho} &{d(X,p_X)} \ar@{->>}[d] \\
{X} \ar[r]_-{\tau_X} &{1,}
}
\]
$\tau_X$ is a regular epimorphism.
\end{remark}

Again by the universal property of the coequalizer, the map $(X,p_X)\mapsto d(X,p_X)$ extends to the morphisms in $\mathrm{Mal}(\mathcal{C})$ and defines functors $d$
\begin{equation}
\begin{aligned}
\label{eqn:dir_funs}
\xymatrix{
{\mathrm{Mal}(\mathcal{C}_g)} \ar[r]^-d &{\mathrm{Gp}(\mathcal{C})} \\
{\mathrm{AMal}(\mathcal{C}_g)} \ar[r]_-d \ar@{^{(}->}[u] &{\mathrm{Ab}(\mathcal{C}),} \ar@{^{(}->}[u]
}
\end{aligned}
\end{equation}
where $\mathcal{C}_g\subseteq\mathcal{C}$ is the full subcategory of objects with global support (also denoted by $\mathcal{C}_{\sharp}$ in \cite{bourn-rodelo}) and $\mathrm{Gp}(\mathcal{C}),$ $\mathrm{Ab}(\mathcal{C})$ are the categories of internal groups and internal abelian groups in $\mathcal{C},$ respectively.

The group $d(X,p_X),$ defined up to isomorphisms, is called the \emph{direction} of $(X,p_X)\in\mathrm{Mal}(\mathcal{C}_g),$ and consequently, the name \emph{direction functor} is employed for the functors \eqref{eqn:dir_funs}. They were first introduced in \cite{baer-sums}. (The terminology here refers to the link between the theory of associative Mal'tsev algebras in $\mathcal{C}=\mathbf{Set}$ and plane affine geometry, which is studied extensively in \cite{bourn-affine}.)

As the Mal'tsev operation $p_X$ is associative, when $X\in\mathcal{C}_g$ the isomorphism $X\times d(X,p_X)\cong X\times X$ results in a simply transitive \emph{group} action of $d(X,p_X)$ on $X,$ making $X$ into a $d(X,p_X)$-\emph{torsor} \cite{torsors} (or a \emph{principal (right)} $d(X,p_X)$-\emph{object}, in the terminology of \cite{barr}, Definition IV.3.1). Conversely, one can prove that any simply transitive group action of an internal group $G$ on $X$ determines a Mal'tsev operation on $X$ whose direction is $G$: see \cite{baer-sums}, Corollary 4.

Thus, if $\mathcal{C}$ is a Mal'tsev category (so that the possible Mal'tsev operation $p_X$ on $X$ is unique and autonomous), we conclude that $d(X,p_X)=d(X)$ is uniquely determined, up to isomorphisms in $\mathrm{Ab}(\mathcal{C}),$ by its acting on $X$ with a simply transitive group action.
\begin{remark}
The full power of a Barr-exact category is actually not quite necessary to construct the functors $d,$ since Barr-exactness is only used to guarantee the effectiveness of the Chasles relations \eqref{eqn:Chasles} (and the good behaviour of their quotients): for this to hold, however, it would be enough for $\mathcal{C}$ to be \emph{effectively regular} (\cite{bourn-rodelo}, Definition 1.5, also named \emph{efficiently regular} in \cite{efficiently}). Thus, the scope of the above construction covers all the examples listed in Remark \ref{ex:naturally_maltsev}$(2).$ See \cite{bourn-rodelo} for the general theory in this setting.
\end{remark}

If $\mathcal{C}$ is a Mal'tsev category, we already observed that $\mathrm{Mal}(\mathcal{C})=\mathrm{AMal}(\mathcal{C})$ is naturally Mal'tsev, and it is not difficult to prove that $\mathrm{Mal}(\mathcal{C}_g)=\big(\mathrm{Mal}(\mathcal{C})\big)_g$: since our main interest in what follows lies in categories that are indeed Barr-exact and Mal'tsev, for the rest of this section we shall consider directly a Barr-exact and naturally Mal'tsev category $\mathcal{C}$ and write the direction functor of $\mathcal{C}$ as

\begin{equation}
\label{eqn:direction_functor}
d\colon\mathcal{C}_g \longrightarrow \mathrm{Ab}(\mathcal{C}).
\end{equation}

The main features of $d$ are collected in the following:
\begin{theorem}[{\cite{baer-sums}, Proposition 6, Theorem 7}]
\label{thm:properties_of_d}
The direction functor \eqref{eqn:direction_functor} is a cofibration, and it preserves finite products, regular epimorphisms and all existing pullbacks. Moreover, it is conservative, so that as a consequence of the previous properties:
\begin{itemize}
\item every map in $\mathcal{C}_g$ is cocartesian for $d;$
\item every fibre of $d$ is a groupoid;
\item $d$ reflects pullbacks.
\end{itemize}
\end{theorem}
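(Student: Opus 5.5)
The idea is to work throughout with the torsor description of $d$. For $X\in\mathcal{C}_g$ we have the pullback square $(b)$, i.e.\ $X\times X\cong X\times d(X)$ over $X$, so $X$ is a $d(X)$-torsor. For a morphism $f\colon X\to Y$ in $\mathcal{C}_g$, the map $d(f)$ is induced on quotients by $f\times f$. Since $\mathcal{C}$ is naturally Mal'tsev, every $f$ commutes with the Mal'tsev operations, so $f$ is equivariant along $d(f)$: $f(x\cdot g)=f(x)\cdot d(f)(g)$.

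\textbf{Cofibration.} Given $X\in\mathcal{C}_g$ and a morphism of abelian groups $\psi\colon d(X)\to A$, I will build the cocartesian lift as the pushout torsor $X\otimes_{d(X)}A$. This is the quotient of $X\times A$ by the relation $(x\cdot g,a)\sim(x,\psi(g)+a)$, which exists by Barr-exactness. Equivalently, and more intrinsically, I take the quotient of $X\times A$ by the kernel pair of the map $(x,a)\mapsto$ ``class of $(x,a)$''.
\begin{itemize}
\item The Mal'tsev operation on $X\times A$ descends to the quotient. I will check this as in the Proposition above: use Barr, Prop.~I.2.15 to know that $\rho^3$ is again a coequalizer.
\item The quotient has global support, because $X$ does.
\item Its direction is $A$: a Barr--Kock argument shows that the associated Chasles square is a pullback.
\end{itemize}
Cocartesianness then says that maps $X\to Z$ lying over $\chi\circ\psi$ correspond to maps of the pushout lying over $\chi$. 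This is the universal property of the coequalizer together with the equivariance noted above.

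\textbf{Preservation properties.}
\begin{itemize}
\item \emph{Products.} $d(X\times Y)\cong d(X)\times d(Y)$, because the Chasles relation of a product is the product of the Chasles relations, and products of regular epis are regular epis in a regular category.
\item \emph{Regular epimorphisms.} If $f$ is regular epi, then so is $f\times f$, and therefore so is $\rho_Y(f\times f)=d(f)\rho_X$. Hence $d(f)$ is regular epi.
\item \emph{Pullbacks.} Given a pullback $P=X\times_Z Y$ in $\mathcal{C}_g$, I will compare $P\times P$ with $(X\times X)\times_{Z\times Z}(Y\times Y)$. The Barr--Kock squares identify $X\times X$ with $X\times d(X)$, and similarly for $Y$ and $Z$. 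So $P\times P\cong P\times\bigl(d(X)\times_{d(Z)}d(Y)\bigr)$ compatibly with the projections to $P$. Since $P$ has global support, pulling back along $P\to1$ reflects isomorphisms (regular epis are pullback-stable in a regular category). Hence $d(P)\cong d(X)\times_{d(Z)}d(Y)$.
\end{itemize}

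\textbf{Conservativity and corollaries.} Suppose $d(f)$ is an isomorphism. The square $(X\times X\to X\times d(X))$ shows that $f\times f$ restricted over each fibre is a pullback of $d(f)$, so $X\times X\to X\times_Y(Y\times Y)$ is an iso. Thus the kernel pair of $f$ is the diagonal, so $f$ is mono. For surjectivity, $Y\cong Y\times_{Y} \ldots$ is hard to write directly; I will argue instead via the torsor. Equivariance along an isomorphism gives $Y\cong X\otimes_{d(X)}d(Y)\cong X$ via the cocartesian lift. This identification is the main delicate point.

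The three bullet points are then formal.
\begin{itemize}
\item Every map $f$ factors through the cocartesian lift of $d(f)$ by a map in the fibre that $d$ sends to an identity. By conservativity that map is an isomorphism, so $f$ is cocartesian.
\item Fibres are groupoids, since maps in a fibre are sent to identities and $d$ is conservative.
\item $d$ reflects pullbacks: compare with the actual pullback in $\mathcal{C}_g$, which exists because $\mathcal{C}_g$ is closed under pullbacks of global-support objects over a common base in the Mal'tsev setting. Then apply preservation and conservativity.
\end{itemize}

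I expect the construction and verification of the cocartesian lift, in particular showing its direction is exactly $A$, to be the main obstacle.
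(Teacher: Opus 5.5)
The paper gives no proof of its own here; it simply cites Bourn's Proposition~6 and Theorem~7, so your proposal has to stand on its own. The outline for products, regular epimorphisms, pullback preservation (by descent along $P\to 1$) and the pushout-torsor construction of cocartesian lifts is sound. The conservativity step, however, has a genuine gap: your ``surjectivity'' argument is circular. The cocartesian lift only yields a comparison $X\otimes_{d(X)}d(Y)\to Y$ lying over $1_{d(Y)}$. To conclude $Y\cong X\otimes_{d(X)}d(Y)$ you need this map in the fibre to be invertible, which is exactly the ``fibres are groupoids'' / ``every map is cocartesian'' statement that you later deduce \emph{from} conservativity.

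The missing ingredient is descent along the regular epimorphism $X\to 1$, and you already have it. Since $X\times_Y(Y\times Y)\cong X\times Y$, your isomorphism $X\times X\cong X\times_Y(Y\times Y)$ says precisely that $1_X\times f\colon X\times X\to X\times Y$ is invertible, i.e.\ the pullback of $f$ along $X\to 1$ is an isomorphism. Since $X$ has global support, and pulling back along a regular epimorphism reflects isomorphisms in a regular category (the fact you already invoke for pullbacks), $f$ is an isomorphism. No separate mono/epi analysis is needed. Alternatively, prove directly that every $f$ is cocartesian. Given $g\colon X\to Z$ and $\chi\colon d(Y)\to d(Z)$ with $d(g)=\chi\, d(f)$, the map $X\times Y\to Z$, $(x,y)\mapsto g(x)\cdot\chi\rho_Y(f(x),y)$, coequalizes the kernel pair of the regular epimorphism $X\times Y\to Y$ and descends to the required $Y\to Z$. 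A cocartesian map over an isomorphism is an isomorphism, so conservativity follows.

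Your justification for reflection of pullbacks is also wrong: $\mathcal{C}_g$ is not closed under pullbacks. In the Barr-exact naturally Mal'tsev category $\mathrm{AMal}(\mathbf{Set})$ (abelian heaps, including the empty one), the two maps $1\to\mathbb{Z}/2$ picking $0$ and $1$ have empty pullback. The correct argument uses the given square itself. If $W\in\mathcal{C}_g$ is its vertex, the comparison $W\to X\times_Z Y$ forces $X\times_Z Y\to 1$ to be a regular epimorphism, so the $\mathcal{C}$-pullback lies in $\mathcal{C}_g$. Preservation plus conservativity then make the comparison invertible. The same observation is needed to justify your tacit assumption, in the preservation step, that an existing pullback in $\mathcal{C}_g$ is a $\mathcal{C}$-pullback with global support.

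Two smaller points need tightening:
\begin{enumerate}
\item In the pullback-preservation step you must check that the composite $P\times d(P)\cong P\times P\cong P\times\bigl(d(X)\times_{d(Z)}d(Y)\bigr)$ is $1_P$ times the canonical comparison. An abstract isomorphism over $P$ would not let you descend.
\item The identification $d(Q)\cong A$ for the pushout torsor $Q$ still needs proof. One way is to invert $Q\times A\to Q\times Q$ using the map induced on the regular epimorphism $(X\times A)^2\to Q\times Q$ by $((x,a),(y,b))\mapsto([x,a],\psi\rho_X(x,y)+b-a)$. Your ``more intrinsic'' description of $Q$ via the kernel pair of its own quotient map is circular and can be dropped.
\end{enumerate}
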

\begin{remark}
The functor $d$ is actually a \emph{pseudo}-cofibration, meaning that for every morphism $\alpha\colon d(X)\rightarrow A$ in $\mathbf{Ab}(\mathcal{C})$ there exists a morphism $f\colon X\rightarrow Y$ in $\mathcal{C}_g$ which is cocartesian over $\alpha$ only up to an isomorphism $d(Y)\cong A$ (see~\cite{baer-sums}, Theorem 7). This fact does not undermine the properties that are discussed below.
\end{remark}
Next, we extend $d$ to the category $n$-$\mathrm{Gpd}(\mathcal{C})$ of $n$-groupoids and internal $n$-functors in $\mathcal{C},$ always with $\mathcal{C}$ Barr-exact and naturally Mal'tsev.

Recall that, for every natural number $n,$ an $n$\emph{-groupoid} in $\mathcal{C}$ is defined inductively as follows:
\begin{itemize}
\item for $n=0,$ a $0$-groupoid is just an object $X\in\mathcal{C};$
\item for $n=1,$ a $1$-groupoid is an ordinary groupoid: since by assumption $\mathcal{C}$ is naturally Mal'tsev, this amounts to having a reflexive graph $\xymatrix{{\underline{X}_1:X_1} \ar@<.9ex>[r]^-{x_0} \ar@<-.9ex>[r]_-{x_1} &{X_0} \ar[l]|(.39){s_0}}$ (\emph{Lawvere condition}: see \cite{johnstone} or \cite{bourn96}), and there results a functor $()_0\colon1\text{-}\mathrm{Gpd}(\mathcal{C})=\mathrm{Gpd}(\mathcal{C})\rightarrow\mathcal{C}$ associating with every groupoid $\underline{X}_1$ its object of objects $X_0;$
\item for $n\geq 2,$ an $n$-groupoid in $\mathcal{C}$ is a groupoid in the fibre $()^{-1}_{n-2}(\underline{X}_{n-2})$ of the functor $()_{n-2}\colon(n-1)\text{-}\mathrm{Gpd}(\mathcal{C})\rightarrow(n-2)\text{-}\mathrm{Gpd}(\mathcal{C})$ for some $(n-2)$-groupoid $\underline{X}_{n-2},$ and we have a functor $()_{n-1}\colon n\text{-}\mathrm{Gpd}(\mathcal{C})\rightarrow(n-1)\text{-}\mathrm{Gpd}(\mathcal{C})$ which ``cuts off'' the $n$-th component.
\end{itemize}
Thus, explicitly, an $n$-groupoid in $\mathcal{C}$ is a (triple) sequence
\begin{equation*}
\xymatrix{
{\underline{X}_n:X_n} \ar@<.9ex>[r]^-{x_0} \ar@<-.9ex>[r]_-{x_1} &{X_{n-1}} \ar[l]|(.44){s_0} \ar@<.9ex>[r]^-{x_0} \ar@<-.9ex>[r]_-{x_1} &{\dots} \ar[l]|(.43){s_0} \ar@<.9ex>[r]^-{x_0} \ar@<-.9ex>[r]_-{x_1} &{X_0} \ar[l]|{s_0}
}
\end{equation*}
of objects and morphisms of $\mathcal{C}$ satisfying at each level the equations $x_0s_0=x_1s_0=1,$ $x_0x_1=x_0x_0$ and $x_1x_0=x_1x_1,$ and a morphism $\underline{f}_n\colon\underline{X}_n\rightarrow\underline{Y}_n$ of $n$-groupoids (which is called an \emph{internal} $n$\emph{-functor}) is a tuple $(f_n,\dots,f_0)$ of morphisms such that the diagram
\begin{equation*}
\xymatrix{
{X_n} \ar[d]_-{f_n} \ar@<.9ex>[r]^-{x_0} \ar@<-.9ex>[r]_-{x_1} &{X_{n-1}} \ar[d]_-{f_{n-1}} \ar[l]|(.55){s_0} \ar@<.9ex>[r]^-{x_0} \ar@<-.9ex>[r]_-{x_1} &{\dots} \ar[l]|(.43){s_0} \ar@<.9ex>[r]^-{x_0} \ar@<-.9ex>[r]_-{x_1} &{X_0} \ar[d]^-{f_0} \ar[l]|{s_0} \\
{Y_n} \ar@<.9ex>[r]^-{y_0} \ar@<-.9ex>[r]_-{y_1} &{Y_{n-1}} \ar[l]|(.55){z_0} \ar@<.9ex>[r]^-{y_0} \ar@<-.9ex>[r]_-{y_1} &{\dots} \ar[l]|(.43){z_0} \ar@<.9ex>[r]^-{y_0} \ar@<-.9ex>[r]_-{y_1} &{Y_0} \ar[l]|{z_0}
}
\end{equation*}
is commutative, in the usual sense.

Limits in $n\text{-}\mathrm{Gpd}(\mathcal{C})$ are computed level-wise as limits in $\mathcal{C},$ and similarly regular epimorphisms are level-wise regular epimorphisms in $\mathcal{C}.$ The category $n\text{-}\mathrm{Gpd}(\mathcal{C})$ is still naturally Mal'tsev and Barr-exact (or effectively regular), when so is $\mathcal{C}$ (cf. \cite{bourn-rodelo} and \cite{gran}).

Now, for $n\geq 1,$ the functor $()_{n-1}$ has a right adjoint $\nabla_n\colon(n-1)\text{-}\mathrm{Gpd}(\mathcal{C})\rightarrow n\text{-}\mathrm{Gpd}(\mathcal{C})$ given by $\xymatrix{
{\nabla_n(\underline{X}_{n-1}):P_{n-1}} \ar@<.9ex>[r]^-{\pi_0} \ar@<-.9ex>[r]_-{\pi_1} &{X_{n-1}} \ar[l]|(.33){\sigma_0} \ar@<.9ex>[r]^-{x_0} \ar@<-.9ex>[r]_-{x_1} &{\dots} \ar[l]|(.43){s_0} \ar@<.9ex>[r]^-{x_0} \ar@<-.9ex>[r]_-{x_1} &{X_0,} \ar[l]|{s_0}
}$ where:
\begin{itemize}
\item $P_0=X_0\times X_0;$
\item $(P_{n-1},\pi_0,\pi_1)$ is a kernel pair of the induced morphism $(x_0,x_1)\colon X_{n-1}\longrightarrow P_{n-2}$ and $\sigma_0=(1,1).$ (Hence the choice of $P_{n-1}$ for ``parallel'' $(n-1)$-cells.)
\end{itemize}
The unit of the adjunction $()_{n-1}\dashv\nabla_n$ is given for every $\underline{X}_n\in n\text{-}\mathrm{Gpd}(\mathcal{C})$ by
\begin{equation}
\begin{aligned}
\label{eqn:eta_n}
{
\xymatrixcolsep{0.1pc}
\xymatrix{
{\underline{X}_n} \ar[d]_-{\underline{\eta}_n} &{:}\\
{\nabla_n(\underline{X}_{n-1})} &{:}
}
}
{
\xymatrix{
{X_n} \ar[d]_-{(x_0,x_1)} \ar@<.9ex>[r]^-{x_0} \ar@<-.9ex>[r]_-{x_1} &{X_{n-1}} \ar@{=}[d] \ar[l]|(.55){s_0} \ar@<.9ex>[r]^-{x_0} \ar@<-.9ex>[r]_-{x_1} &{\dots} \ar[l]|(.43){s_0} \ar@<.9ex>[r]^-{x_0} \ar@<-.9ex>[r]_-{x_1} &{X_0} \ar@{=}[d] \ar[l]|{s_0} \\
{P_{n-1}} \ar@<.9ex>[r]^-{\pi_0} \ar@<-.9ex>[r]_-{\pi_1} &{X_{n-1}} \ar[l]|{\sigma_0} \ar@<.9ex>[r]^-{x_0} \ar@<-.9ex>[r]_-{x_1} &{\dots} \ar[l]|(.43){s_0} \ar@<.9ex>[r]^-{x_0} \ar@<-.9ex>[r]_-{x_1} &{X_0,} \ar[l]|{s_0}
}
}
\end{aligned}
\end{equation}
and it is then a consequence of the following proposition that, for every $n\geq 1,$ the functor $()_{n-1}$ is a fibration (in the sense of Grothendieck \cite[VI]{sga1}):
\begin{proposition}
Let a functor $F\colon\mathcal{U}\rightarrow\mathcal{V}$ have a right adjoint $G.$ Then a morphism $h\colon X\rightarrow Y$ in $\mathcal{U}$ is cartesian for $F$ if and only if the naturality square
\[
\xymatrix{
X \ar[d]_-h \ar[r]^-{\eta_X} &{GF(X)} \ar[d]^-{GF(h)} \\
Y \ar[r]_-{\eta_Y} &{GF(Y)}
}
\]
of the unit of the adjunction $F\dashv G$ is a pullback.
\end{proposition}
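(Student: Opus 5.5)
The argument is a direct translation between the two universal properties, using the adjunction bijection $\mathcal{U}(Z,G(B))\cong\mathcal{V}(F(Z),B)$, $a\mapsto\varepsilon_B F(a)$, with inverse $\beta\mapsto G(\beta)\eta_Z$. Recall that $h\colon X\to Y$ is cartesian for $F$ when, for every $g\colon Z\to Y$ and every $\beta\colon F(Z)\to F(X)$ with $F(h)\beta=F(g)$, there is a unique $k\colon Z\to X$ with $hk=g$ and $F(k)=\beta$.

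\emph{Step 1.} I will show that commuting cones over the naturality square correspond to pairs $(g,\beta)$ as above. A cone with vertex $Z$ consists of $g\colon Z\to Y$ and $a\colon Z\to GF(X)$ with $GF(h)a=\eta_Y g$. Write $a=G(\beta)\eta_Z$ with $\beta=\varepsilon_{F(X)}F(a)\colon F(Z)\to F(X)$. Then $GF(h)a=G(F(h)\beta)\eta_Z$. On the other side, $\eta_Y g=GF(g)\eta_Z$ by naturality of $\eta$. Since $\beta\mapsto G(\beta)\eta_Z$ is injective, the cone condition $GF(h)a=\eta_Y g$ holds if and only if $F(h)\beta=F(g)$.

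\emph{Step 2.} I will check that a morphism $k\colon Z\to X$ factors the cone exactly when it solves the cartesian lifting problem. The requirement $hk=g$ is common to both. The other requirement is $\eta_X k=a$. Naturality gives $\eta_X k=GF(k)\eta_Z$, and $GF(k)\eta_Z$ is the transpose of $F(k)$. By the bijection, $\eta_X k=a$ holds if and only if $F(k)=\beta$.

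\emph{Step 3.} The two universal properties now coincide: the square is a pullback if and only if every cone factors uniquely, which by Steps 1 and 2 holds if and only if every pair $(g,\beta)$ has a unique lift $k$, i.e.\ $h$ is cartesian. This gives both directions at once.

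The only point needing care is in Steps 1 and 2: making the triangle-identity rewriting explicit, and confirming that the transpose of $\eta_X k$ is $F(k)$, using $\varepsilon_{F(X)}F(\eta_X)=1$. Everything else is formal.
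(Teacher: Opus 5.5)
Your proof is correct. Transposing along the adjunction bijection $\beta\mapsto G(\beta)\eta_Z$ identifies cones over the cospan formed by $\eta_Y$ and $GF(h)$ with lifting problems for $F$ at $h$, and factorizations of cones with lifts; this is the standard argument, and the paper gives no proof of its own but refers to \cite{BCGS}, Proposition 36. Your argument relies on the strong notion of cartesian morphism you state (lifting against arbitrary $\beta\colon F(Z)\to F(X)$), which is the right reading here: with Grothendieck's original weak notion (lifting only over $1_{F(X)}$), the forward implication can fail for a general left adjoint, as a five-element poset with a Galois connection to the chain $c<a<b$ already shows.
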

(See for example \cite{BCGS}, Proposition 36, for a proof.)

A level-wise application of the functor $d$ to $n$-groupoids and $n$-functors yields a functor $\underline{d}_n\colon n\text{-}\mathrm{Gpd}(\mathcal{C}_g)\rightarrow n\text{-}\mathrm{Gpd}\big(\mathrm{Ab}(\mathcal{C})\big),$ which is nothing but the direction functor $d\colon\mathcal{D}_g\rightarrow\mathrm{Ab}(\mathcal{D})$ of the category $\mathcal{D}=n\text{-}\mathrm{Gpd}(\mathcal{C}),$ since it follows from the nature of regular epimorphisms and limits in $n\text{-}\mathrm{Gpd}(\mathcal{C})$ that $n\text{-}\mathrm{Gpd}(\mathcal{C}_g)=\big(n\text{-}\mathrm{Gpd}(\mathcal{C})\big)_g$ and $n\text{-}\mathrm{Gpd}\big(\mathrm{Ab}(\mathcal{C})\big)=\mathrm{Ab}\big(n\text{-}\mathrm{Gpd}(\mathcal{C})\big).$

Then the kernel of $\underline{d}_n(\underline{\eta}_n)$ in $\mathrm{Ab}\big(n\text{-}\mathrm{Gpd}(\mathcal{C})\big)=n\text{-}\mathrm{Gpd}\big(\mathrm{Ab}(\mathcal{C})\big)$ is completely determined by an internal abelian group $A$ in $\mathcal{C}$
\begin{equation*}
\begin{aligned}
{
\xymatrixcolsep{0.1pc}
\xymatrix{
{K_n(A)=Ker\big(\underline{d}_n(\underline{\eta}_n)\big)} \ar@{>->}[d] &{:} \\
{\underline{d}_n(\underline{X}_n}) \ar[d]_-{\underline{d}_n(\underline{\eta}_n)} &{:}\\
{\underline{d}_n(\nabla_n\underline{X}_{n-1})} &{:}
}
}
{
\xymatrixcolsep{1pc}
\xymatrix{
{A} \ar@<.9ex>[r] \ar@<-.9ex>[r] \ar@{>->}[d] &{1}\ar[l] \ar[d] \ar@{=}[r] &{\dots} \ar@{=}[r] &{1} \ar[d] \\
{d(X_n)} \ar[d]_-{d(x_0,x_1)} \ar@<.9ex>[r] \ar@<-.9ex>[r] &{d(X_{n-1})} \ar@{=}[d] \ar[l] \ar@<.9ex>[r] \ar@<-.9ex>[r] &{\dots} \ar[l] \ar@<.9ex>[r] \ar@<-.9ex>[r] &{d(X_0)} \ar@{=}[d] \ar[l] \\
{d(P_{n-1})} \ar@<.9ex>[r] \ar@<-.9ex>[r] &{d(X_{n-1})} \ar[l] \ar@<.9ex>[r] \ar@<-.9ex>[r] &{\dots} \ar[l] \ar@<.9ex>[r] \ar@<-.9ex>[r] &{d(X_0),} \ar[l]
}
}
\end{aligned}
\end{equation*}
and we call this $A=Ker\big(d(x_0,x_1)\colon d(X_n)\rightarrow d(P_{n-1})\big)$ the $n$\emph{-direction} of the $n$-groupoid $\underline{X}_n.$ The map $\underline{X}_n\mapsto A$ extends to a functor $d_n\colon n\text{-}\mathrm{Gpd}(\mathcal{C})_g\rightarrow\mathrm{Ab}(\mathcal{C})$ (introduced in \cite{bourn-rodelo}). We also set $d_n=d$ when $n=0.$
\subsection{Properties of the direction functors and connection with Cohomology}
As it is shown in \cite{baer-sums, bourn-rodelo}, the functors $d_n$ (with $n\geq0$) allow for a unified description of many cohomological theories of interest. The key point in this respect is that, by the properties of (appropriate restrictions of) the $d_n$'s, the connected components of the fibres $d_n^{-1}(A)$ inherit a canonical abelian group structure, which in familiar cases (typically $\mathcal{C}=\mathrm{Mal}(\mathcal{D}/Y),$ as in the Examples \ref{ex:classical_cohomologies} below) coincide with the usual cohomology groups $H^n(Y,A)$ of $Y$ with coefficients in $A.$

Our goal is to apply this theory to $\mathcal{C}=\mathrm{Mal}(\mathcal{E}),$ where $\mathcal{E}=Sl(\mathcal{D}/Y)\subseteq \mathcal{D}/Y$ is the full subcategory of $S$-special morphisms with fixed codomain $Y$ in an appropriate $S$-protomodular category $\mathcal{D},$ to show that other existent cohomological theories, such as the ones described in \cite{golasinski, hoff} for (small) categories, can also be framed in these terms.

The properties of the functors $d_n$'s which ensure that the (possibly large) set $\pi_0\big(d_n^{-1}(A)\big)$ of connected components of $d_n^{-1}(A)$ is endowed with a natural structure of an abelian group are collected in the following general result:
\begin{proposition}[{\cite{bourn-rodelo}, Proposition 2.11 and Theorem 2.12} - cf. \cite{baer-sums}, Theorem 9]
\label{prop:fibres_abelian_groups}
Let $\mathcal{E}$ be a category with finite products and $\mathcal{A}$ a finitely complete additive category. Consider a functor $F\colon\mathcal{E}\rightarrow\mathcal{A}$ such that:
\begin{enumerate}
\item $F$ is a (pseudo-)cofibration;
\item $F$ preserves finite products;
\item the cocartesian maps are stable under finite products.
\end{enumerate}
Then, for any $A\in\mathcal{A},$ the fibre $F^{-1}(A)$ admits a canonical symmetric monoidal structure $\big(F^{-1}(A),\otimes,I,\dots\big)$ where $X\otimes Y$ is defined by considering $X\times Y\rightarrow X\otimes Y$ cocartesian over the sum $+:A\times A\rightarrow A$ and $1\rightarrow I$ cocartesian over $0\rightarrow A.$

Moreover, if
\begin{enumerate}\setcounter{enumi}{3}
\item $\mathcal{E}$ has kernel pairs and $F$ preserves them, and
\item the subdiagonal of any cocartesian map (i.e. the reflexivity morphism of its kernel relation) is cocartesian (see \cite{bourn-rodelo}, Theorem $2.12$),
\end{enumerate}
then $\pi_0\big(F^{-1}(A)\big)$ admits a canonical structure of an abelian group.
\end{proposition}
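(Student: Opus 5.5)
The plan is to build the monoidal structure directly from the cocartesian lifts and then pass to connected components. For $X,Y\in F^{-1}(A)$, hypothesis (2) gives $F(X\times Y)\cong A\times A$. By (1) we choose a cocartesian map $X\times Y\rightarrow X\otimes Y$ over $+\colon A\times A\rightarrow A$, and likewise $1\rightarrow I$ over $0\colon 0=F(1)\rightarrow A$; here $F(1)$ is terminal, hence zero, in $\mathcal{A}$. Functoriality of $\otimes$ on the fibre comes from the universal property of cocartesian maps. Given $f\colon X\rightarrow X'$ and $g\colon Y\rightarrow Y'$ in the fibre, the composite $X\times Y\rightarrow X'\times Y'\rightarrow X'\otimes Y'$ lies over $+$, so it factors uniquely through $X\times Y\rightarrow X\otimes Y$ by a map over $1_A$.

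The coherence isomorphisms come from the same uniqueness. For associativity, compare $(X\otimes Y)\otimes Z$ and $X\otimes(Y\otimes Z)$. By (3), the map $(X\times Y)\times Z\rightarrow (X\otimes Y)\times Z$ is cocartesian, being a product of a cocartesian map with an identity, and identities are cocartesian. Composites of cocartesian maps are cocartesian. So both objects receive cocartesian maps from $X\times Y\times Z$ over the same arrow $A^3\rightarrow A$, because $+$ is associative. Hence there is a unique isomorphism between them over $1_A$.

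Units, symmetry (using the twist and commutativity of $+$) and the pentagon and hexagon axioms follow identically. Every arrow in each diagram is the unique comparison between cocartesian lifts from a product $X_1\times\dots\times X_k$ over one map $A^k\rightarrow A$, so the diagrams commute. In the pseudo-cofibration case, every identification is made up to the chosen isomorphisms $F(-)\cong A$, and nothing else changes.

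For the abelian group structure on $\pi_0$, first note that $\otimes$ preserves connectedness, since it is a functor on the fibre. Hence $\pi_0(F^{-1}(A))$ is a commutative monoid with unit $[I]$. It remains to find inverses.

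The natural candidate for the inverse of $X$ is a cocartesian lift $X\rightarrow X^{*}$ over $-1\colon A\rightarrow A$. We must then show that $X\otimes X^{*}$ is connected to $I$. Consider the kernel pair $R[X\rightarrow 1]=X\times X$ and the diagonal $X\rightarrow X\times X$, which is a subdiagonal of $X\rightarrow 1$. By (4), $F(X\times X)$ is the kernel pair $A\times A$. The subdiagonal/cocartesian hypothesis (5) should give a cocartesian map from $X\times X$ over the difference map $A\times A\rightarrow A$. I would aim for a zigzag
\[
X\otimes X^{*}\leftarrow X\times X \rightarrow D, \qquad 1\rightarrow I \rightarrow D \leftarrow X,
\]
built by cocartesian lifts: $X\times X$ maps to $X\otimes X^{*}$ over $(a,b)\mapsto a-b$, and the diagonal composed with this lands over $0$. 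The diagonal $X\rightarrow X\times X$ followed by the cocartesian map gives a map $X\rightarrow X\otimes X^{*}$ over $0$. Comparing it with $X\rightarrow 1\rightarrow I$, which is also over $0$, cocartesianness of the reflexivity map should factor one through the other. This produces a morphism $I\rightarrow X\otimes X^{*}$ in the fibre, or a span connecting them.

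The main obstacle is this last step. The difficulty is making the cocartesian property of the subdiagonal (5) actually yield a morphism in the fibre relating $I$ and $X\otimes X^{*}$, rather than just two maps out of $X$ over $0$. I would first try to exploit that $X\rightarrow 1$, viewed over $A\rightarrow 0$, has a kernel relation whose reflexivity map is cocartesian. Then any map out of $X\times X$ over the difference map, restricted along the diagonal, is forced to factor through $1\rightarrow I$. That factorisation gives the required arrow $I\rightarrow X\otimes X^{*}$ over $1_A$.
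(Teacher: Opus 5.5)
The gap is the step you flag yourself, and the fix you sketch does not work. Your construction of the symmetric monoidal structure is sound, and so is reducing the second part to showing $[X\otimes X^{*}]=[I]$ for a cocartesian lift $c\colon X\rightarrow X^{*}$ of $-1_A$. The paper gives no proof of its own here; it cites Bourn--Rodelo.

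Your fix fails for two reasons. First, hypothesis (5) concerns only kernel relations of \emph{cocartesian} maps. The map $X\rightarrow 1$, lying over $A\rightarrow 0$, is not cocartesian in general: for $d_n$ with $n\geq 1$, cocartesian means $()_{n-1}$-invertible. Second, the arrow you aim for, $I\rightarrow X\otimes X^{*}$ over $1_A$, is (by the cocartesian property of $1\rightarrow I$) the same thing as a global element $1\rightarrow X\otimes X^{*}$, and none need exist. For $d_n$ with $n\geq 1$ one has $(X\otimes X^{*})_0\cong X_0\times X_0$, so such an arrow would force a global element of $X_0$. In $\mathrm{Mal}\big(Sl(\mathbf{Cat}_B/\underline{Y})\big)$ that means a splitting of $\underline{f}_0$. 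Your tentative zig-zag through $X\times X$ is also not admissible, since $X\times X$ lies over $A\times A$ and not in $F^{-1}(A)$.

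What is missing is the right apex for a span. You already have two maps out of $X$ lying over the zero map $0\colon A\rightarrow A$:
\begin{itemize}
\item $X\rightarrow 1\rightarrow I$;
\item $X\xrightarrow{\langle 1_X,c\rangle}X\times X^{*}\rightarrow X\otimes X^{*}$, which lies over $+\circ\langle 1_A,-1_A\rangle=0$.
\end{itemize}
By (1), take a cocartesian $\omega\colon X\rightarrow W$ over $0\colon A\rightarrow A$. Both maps lie over $1_A\circ 0$, so each factors uniquely through $\omega$ by a map over $1_A$. This gives a span $I\leftarrow W\rightarrow X\otimes X^{*}$ inside $F^{-1}(A)$, after the usual identification $F(W)\cong A$ in the pseudo case.

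Hence $[X]+[X^{*}]=[I]$, and the commutative monoid $\pi_0\big(F^{-1}(A)\big)$ is an abelian group. This argument uses only (1)--(3) and additivity of $\mathcal{A}$. With connected components defined by zig-zags as in \eqref{eqn:zig-zag}, hypotheses (4) and (5) are not needed for the existence of inverses.
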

(Recall that $X,Y\in F^{-1}(A)$ are in the same connected component if there exist objects $X_1,\dots,X_{k-1},Y_1,\dots,Y_k$ and maps
\begin{equation}
\begin{aligned}
\label{eqn:zig-zag}
{
\xymatrixcolsep{1.5pc}
\xymatrix{
 &{Y_1} \ar[dl] \ar[dr] &\ &{Y_2} \ar[dl] \ar[dr] &\ &\ \ar[dl] \\
X &\ &{X_1} &\ &{X_2} &\
}
}
\xymatrix{
{\dots}\\
{\dots}
}
{
\xymatrixcolsep{1.5pc}
\xymatrix{
\ar[rd]  &\ &{Y_k} \ar[dl] \ar[dr] &\ \\
&{X_{k-1}} &\ &Y
}
}
\end{aligned}
\end{equation}
in the fibre $F^{-1}(A).$)

By Theorem \ref{thm:properties_of_d}, the conditions of Proposition \ref{prop:fibres_abelian_groups} are surely met by $d_n$ when $n=0.$ For the same conditions to apply also when $n\geq 1,$ we must actually consider the restriction of $d_n\colon n\text{-}\mathrm{Gpd}(\mathcal{C}_g)\rightarrow\mathrm{Ab}(\mathcal{C})$ to the so-called \emph{aspherical} groupoids:
\begin{definition}[\cite{aspherical, bourn-rodelo}]
Let $\mathcal{C}$ be as before.
\begin{itemize}
\item A $0$-groupoid $X_0\in\mathcal{C}$ is aspherical when it has global support, i.e. when the terminal map $X_0\rightarrow 1$ is a regular epimorphism.
\item For $n\geq 1,$ an $n$-groupoid $\underline{X}_n$ is aspherical when $\underline{X}_{n-1}=(\underline{X}_n)_{n-1}$ is aspherical and $\underline{\eta}_n\colon\underline{X}_n\rightarrow\nabla_n(\underline{X}_{n-1})$ (cf.~\eqref{eqn:eta_n}) is a regular epimorphism in $n\text{-}\mathrm{Gpd}(\mathcal{C}).$
\end{itemize}
\end{definition}
Denote by $n\text{-}\mathrm{Asp}(\mathcal{C})\subseteq n\text{-}\mathrm{Gpd}(\mathcal{C})_g$ the full subcategory of aspherical $n$-groupoids in $\mathcal{C}.$ Following \cite{bourn-rodelo}, the resulting functor
\begin{equation}
\label{eqn:d_n}
d_n\colon n\text{-}\mathrm{Asp}(\mathcal{C})\longrightarrow\mathrm{Ab}(\mathcal{C})
\end{equation}
is called the $n$\emph{-dimensional direction functor} of $\mathcal{C}.$

Then one can prove:
\begin{theorem}[\cite{bourn-rodelo}, Theorems 3.6 and 4.6]
For every $n\geq 1,$ the $n$-dimensional direction functor $d_n\colon n\text{-}\mathrm{Asp}(\mathcal{C})\rightarrow\mathrm{Ab}(\mathcal{C})$ is a pseudo-cofibration, and it preserves finite products and all existing pullbacks. A map in $n\text{-}\mathrm{Asp}(\mathcal{C})$ is cocartesian for $d_n$ if and only if it is $()_{n-1}$-invertible, and it is cartesian for $()_{n-1}$ if and only if it is $d_n$-invertible: thus, the cocartesian maps for $d_n$ are stable under finite products. Finally, the subdiagonals of $d_n$-cocartesian maps are cocartesian, and $d_n$ reflects isomorphisms when it is restricted to cocartesian maps (by \cite{bourn-rodelo}, Theorem 3.6(7)).
\end{theorem}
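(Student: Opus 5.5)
The plan is to argue by induction on $n$, treating $\mathcal{D}=(n-1)\text{-}\mathrm{Gpd}(\mathcal{C})$ as the base category and an $n$-groupoid as a groupoid in the fibre of $()_{n-2}$. Since $\mathcal{D}$ is again Barr-exact and naturally Mal'tsev, and the level-wise direction functor $\underline{d}_n$ is the ordinary direction functor of $n\text{-}\mathrm{Gpd}(\mathcal{C})$, Theorem \ref{thm:properties_of_d} applies to $\underline{d}_n$. Hence $\underline{d}_n$ preserves finite products, pullbacks and regular epimorphisms, and is conservative. The functor $d_n$ is the composite of $\underline{d}_n$, precomposed with the unit $\underline{\eta}_n$, and the kernel functor. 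Preservation of finite products and pullbacks then follows, because limits commute with limits and $\nabla_n$, being a right adjoint, preserves limits.

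For the pseudo-cofibration property, I start from an aspherical $\underline{X}_n$ with $d_n(\underline{X}_n)=A$ and a morphism $\alpha\colon A\to B$. The task is to build $\underline{Y}_n$ with the same $(n-1)$-truncation. I take $Y_n$ to be the pushout-type quotient of $X_n\times B$ (torsor-wise) by the action of $A$ through $(1,-\alpha)$, i.e.\ the usual change-of-coefficients of an $A$-torsor over $P_{n-1}$. Asphericity of $\underline{X}_n$ makes $X_n\to P_{n-1}$ a regular epimorphism with Mal'tsev-fibre structure, so $X_n$ is an $A$-torsor over $P_{n-1}$ in the slice, and Barr-exactness lets the quotient be formed. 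The induced map $\underline{X}_n\to\underline{Y}_n$ is the identity below level $n$.

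Next I characterise the cocartesian maps. A $()_{n-1}$-invertible map between aspherical $n$-groupoids is cocartesian, because the torsor structure over $P_{n-1}$ determines any factorisation. Conversely, every map factors as a $()_{n-1}$-invertible map followed by a $()_{n-1}$-cartesian one, the latter being the pullback along $\nabla_n$. A cartesian map is $d_n$-invertible, because the pullback square of units has the same kernel by Barr--Kock. Combining these gives both equivalences, and stability of cocartesian maps under products is then immediate, since $()_{n-1}$ preserves products. For subdiagonals, the kernel pair of a $()_{n-1}$-invertible map is again $()_{n-1}$-trivial, so its diagonal is $()_{n-1}$-invertible.

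The main obstacle is the construction of cocartesian liftings. Two things must be checked: that the new $Y_n$ still carries a groupoid structure over $\underline{X}_{n-1}$, i.e.\ that composition descends to the quotient, and that $\underline{Y}_n$ is again aspherical. My first approach to the composition is to use the Lawvere condition, so that only a reflexive graph is needed. I would also use the fact that the quotient of $X_n\times B$ is computed in the naturally Mal'tsev category $\mathcal{D}$, where the Mal'tsev operation is natural and therefore descends automatically. Asphericity follows because regular epimorphisms are preserved by the quotient.
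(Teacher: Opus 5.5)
The paper gives no argument for this statement; it is imported from \cite{bourn-rodelo}. Your sketch therefore has to stand on its own, and in outline it is correct. Your route works by hand at level $n$. You recognise $X_n\to P_{n-1}$ as an $A$-torsor (restrict the isomorphism $X_n\times X_n\cong X_n\times d(X_n)$ to the kernel $A$), change its structure group along $\alpha$ by an exact quotient of $X_n\times B$, and rebuild the $n$-groupoid via the Lawvere condition.

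The tools the paper does set up allow a more economical route. Apply Theorem \ref{thm:properties_of_d} to $\underline{d}_n$. In the abelian category $\mathrm{Ab}(n\text{-}\mathrm{Gpd}(\mathcal{C}))$, form the pushout $\underline{G}$ of $K_n(A)\rightarrowtail\underline{d}_n(\underline{X}_n)$ along $K_n(\alpha)$, and lift $\underline{d}_n(\underline{X}_n)\to\underline{G}$ along $\underline{d}_n$. The remaining properties then follow:
\begin{itemize}
\item conservativity of $\underline{d}_{n-1}$ makes the lift $()_{n-1}$-invertible;
\item $\underline{d}_n$ preserves regular epimorphisms and, by conservativity plus image factorisation, reflects them, which gives asphericity;
\item $d_n(\underline{Y}_n)\cong B$ is read off the pushout;
\item a $()_{n-1}$-invertible $\underline{f}$ is $d_n$-cocartesian, because $\underline{d}_n(\underline{f})$ is a morphism of short exact sequences invertible on the quotient term, hence has a pushout left square, and every map is $\underline{d}_n$-cocartesian.
\end{itemize}
That route gets the groupoid structure and the identification of the direction for free. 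Yours makes the lift explicit, but it must check exactly those points, and you leave them open:
\begin{itemize}
\item that the quotient $Y_n$ is a $B$-torsor over $P_{n-1}$, so that $d_n(\underline{Y}_n)\cong B$ with $d_n$ of the quotient map equal to $\alpha$;
\item that for $n\geq 2$ the fibre of $()_{n-2}$ over $\underline{X}_{n-2}$ is naturally Mal'tsev, so that the Lawvere condition applies there.
\end{itemize}
Your quotient of $X_n\times B$ lives in $\mathcal{C}$, not in $\mathcal{D}$. To get the $n$-groupoid structure for free, quotient $\underline{X}_n\times K_n(B)$ in $n\text{-}\mathrm{Gpd}(\mathcal{C})$ instead.

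A few further points need to be made explicit:
\begin{itemize}
\item \emph{Combining these} relies on the general fact that a cocartesian map lying over an isomorphism is an isomorphism. This same fact yields the final clause on reflection of isomorphisms, which you do not address.
\item The middle object of your factorisation and the kernel pair used for subdiagonals must be shown to be aspherical.
\item \emph{Cartesian implies $d_n$-invertible} needs only that $\underline{d}_n$ preserves pullbacks, not Barr--Kock.
\end{itemize}
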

\begin{remarks}
\begin{enumerate}
\item The fibres $d_n^{-1}(A)$ are no longer groupoids when $n\geq 1$ ($d_n$ is no longer conservative), but one can show that the length of any zig-zag \eqref{eqn:zig-zag} for two connected objects can be reduced to $k=1.$ This is a rather technical point, since $n\text{-}\mathrm{Asp}(\mathcal{C})$ does not have all pullbacks in general, see \cite{bourn-rodelo}.
\item It is also shown in \cite{bourn-rodelo} that an object $\underline{X}_n\in d_n^{-1}(A)$ is in the connected component of $0=K_n(A):\xymatrix{{A} \ar@<.9ex>[r] \ar@<-.9ex>[r] &{1}\ar[l] \ar@{=}[r] &{\dots} \ar@{=}[r] &{1}}$ if and only if there is a map $\nabla_n(\underline{Z}_{n-1})\rightarrow\underline{X}_n$ in $n\text{-}\mathrm{Asp}(\mathcal{C})$ for some aspherical $(n-1)$-groupoid $\underline{Z}_{n-1}$ (\cite{bourn-rodelo}, Propositions 3.9 and 4.9).
\end{enumerate}
\end{remarks}
As a consequence, by Proposition \ref{prop:fibres_abelian_groups} we have indeed for the functors $d_n\colon n\text{-}\mathrm{Asp}(\mathcal{C})\rightarrow\mathrm{Ab}(\mathcal{C})$:
\begin{corollary}
\label{cor:connected_components_group}
For every $A\in\mathrm{Ab}(\mathcal{C})$ and every $n\geq 0,$ $\pi_0\big(d_n^{-1}(A)\big)=H^{n+1}_{\mathcal{C}}(A)$ is canonically an abelian group.
\end{corollary}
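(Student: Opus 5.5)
The corollary follows by checking the five hypotheses of Proposition \ref{prop:fibres_abelian_groups} for $F=d_n$, with $\mathcal{A}=\mathrm{Ab}(\mathcal{C})$. The target is finitely complete and additive: $\mathcal{C}$ is finitely complete, so internal abelian groups in $\mathcal{C}$ form a finitely complete additive category, with limits computed in $\mathcal{C}$. The domain has finite products in both cases. For $n=0$ the domain is $\mathcal{C}_g$, closed under finite products in $\mathcal{C}$ because regular epimorphisms are stable under products in a regular category, and $1$ has global support. For $n\geq 1$ the domain is $n\text{-}\mathrm{Asp}(\mathcal{C})$, and products of aspherical $n$-groupoids are aspherical. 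This follows by induction: $()_{n-1}$ and $\nabla_n$ preserve products (as a right adjoint, $\nabla_n$ preserves limits), and $n\text{-}\mathrm{Gpd}(\mathcal{C})$ is regular, so $\underline{\eta}_n$ of a product is a product of regular epimorphisms, hence a regular epimorphism.

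For $n=0$, Theorem \ref{thm:properties_of_d} gives conditions (1)–(3) directly. Indeed $d$ is a (pseudo-)cofibration preserving finite products, and every map is cocartesian, which makes (3) trivial. For (4), $\mathcal{C}_g$ has kernel pairs: the kernel pair $R[f]$ of $f\colon X\to Y$ in $\mathcal{C}_g$ contains the diagonal $X\to R[f]$, so $R[f]\to 1$ factors a regular epimorphism through it and is therefore regular. Moreover $d$ preserves all existing pullbacks, hence these kernel pairs. Condition (5) is again automatic, since every map is cocartesian.

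For $n\geq 1$ the theorem cited from \cite{bourn-rodelo} supplies (1), (2), (3) and (5). It states that $d_n$ is a pseudo-cofibration preserving finite products, that its cocartesian maps are stable under finite products, and that subdiagonals of cocartesian maps are cocartesian. The remaining point is (4), and I expect it to be the main obstacle, since $n\text{-}\mathrm{Asp}(\mathcal{C})$ need not have all pullbacks. I would not need arbitrary kernel pairs, only those of the maps entering condition (5), namely cocartesian maps, which are exactly the $()_{n-1}$-invertible ones. For such $\underline{f}$, the kernel pair computed level-wise in $n\text{-}\mathrm{Gpd}(\mathcal{C})$ has identical lower levels $\underline{X}_{n-1}$, which are aspherical. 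I would then show the top unit $\underline{\eta}_n$ of the kernel pair is a regular epimorphism, using the subdiagonal section together with the Mal'tsev property of $\mathcal{C}$. Alternatively I would simply invoke \cite{bourn-rodelo}, where exactly this restricted form of (4) is established. Preservation of these kernel pairs then follows because $d_n$ preserves all existing pullbacks.

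With all hypotheses verified, Proposition \ref{prop:fibres_abelian_groups} makes each $\pi_0\big(d_n^{-1}(A)\big)$ an abelian group, with addition induced by the cocartesian lift of $+\colon A\times A\to A$ and zero the component of the lift of $0\to A$. Naming it $H^{n+1}_{\mathcal{C}}(A)$ is then only a definition.
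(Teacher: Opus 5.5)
Your proposal is correct and follows the paper's route: the paper obtains the corollary by feeding Theorem~\ref{thm:properties_of_d} (for $n=0$) and the cited Bourn--Rodelo theorem (for $n\geq 1$) into Proposition~\ref{prop:fibres_abelian_groups}. Your extra checks fill in details the paper leaves implicit. These are the finite products in the domain, and the existence and preservation of kernel pairs of the $()_{n-1}$-invertible (cocartesian) maps for $n\geq 1$. For the latter the Mal'tsev property is not even needed: the top-level unit of the level-wise kernel pair is $(x_0,x_1)\circ p_0$, with $p_0\colon R[f_n]\to X_n$ split by the diagonal, so it is already a regular epimorphism.
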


It is then the main result of \cite{bourn-rodelo} that every short exact sequence $0\rightarrow A\rightarrow B \rightarrow C \rightarrow 0$ in $\mathrm{Ab}(\mathcal{C})$ induces a long exact sequence of abelian groups and group homomorphisms
\begin{equation}
\label{eqn:long_exact_seq}
\cdots \rightarrow H^{n}_{\mathcal{C}}(B) \rightarrow H^{n}_{\mathcal{C}}(C)\rightarrow H^{n+1}_{\mathcal{C}}(A) \rightarrow H^{n+1}_{\mathcal{C}}(B) \rightarrow \cdots
\end{equation}
which in the classical cases (see the examples below) coincides with the usual long exact cohomology sequence, and which is now obtained in a non-necessarily abelian context and without the assumption of having enough projective objects. The sequence \eqref{eqn:long_exact_seq} extends the six-term exact sequence of \cite{barr}, Chapter V.
\begin{examples}
\label{ex:classical_cohomologies}
\begin{enumerate}
\item If $\mathcal{A}$ is an abelian category and $Y\in\mathcal{A},$ the group $H^n_{\mathcal{A}/Y}(A)$ is nothing but the classical Yoneda's $\mathrm{Ext}^n_{\mathcal{A}}(Y,A)$ group, see \cite{bourn-rodelo}, Examples 2.15(1), 3.8 and 4.8. The canonical group operation on $H^n_{\mathcal{A}/Y}(A)$ induced by the symmetric monoidal structure on $d_n^{-1}(A)$ in this case coincides precisely with the Baer sum of $n$-extensions.
\item Similarly, the application of this theory to $\mathcal{C}=\mathrm{Mal}(\mathcal{D}/Y)$ for $\mathcal{D}=\mathbf{Gp}$ or $\mathcal{D}=\mathbf{Lie}_R$ yields the classical cohomology theories for groups and Lie algebras over a ring $R,$ see \cite{bourn-rodelo}. Other relevant instances arise by considering the categories $\mathcal{D}=\mathbf{GpTop}$ of topological groups or $\mathcal{D}=\mathbf{GpHaus}$ of Hausdorff groups, see again \cite{bourn-rodelo}.
\end{enumerate}
\end{examples}
\section{$S$-protomodular categories and Schreier points} \label{section S-protomodularity}
\subsection{$S$-protomodular and $S$-Mal'tsev categories}
Recall that a \emph{point} in a category $\mathcal{C}$ is a pair of morphisms $\xymatrix{A \ar@<.5ex>[r]^-f &B \ar@<.5ex>[l]^-s }$ such that $s$ is a fixed section of $f$ (i.e. $fs=1_B$). Together with the morphisms $g,h$ in $\mathcal{C}$ such that the diagrams
\[
\xymatrix{
{A^\prime} \ar[r]^-g \ar@<-.5ex>[d]_-{f^\prime} &A \ar@<-.5ex>[d]_-f \\
{B^\prime} \ar[r]_h \ar@<-.5ex>[u]_-{s^\prime} &B \ar@<-.5ex>[u]_-s
}
\]
commute both downwards and upwards, the points in $\mathcal{C}$ form a category which we denote by $Pt(\mathcal{C}).$ The category $Pt(\mathcal{C})$ is finitely complete if so is $\mathcal{C},$ and regular epimorphisms in $Pt(\mathcal{C})$ are level-wise regular epimorphisms in $\mathcal{C}.$

A point $(f,s)$ is \emph{strong} \cite{S-protomodular} if, for every downward pullback
\begin{equation}
\begin{aligned}
\label{eqn:point_pullback}
\xymatrix{
P \pullback \ar[r]^-u \ar@<-.5ex>[d]_-{f^\prime} &A \ar@<-.5ex>[d]_-f \\
C \ar[r]_v \ar@<-.5ex>[u]_-{s^\prime} &B \ar@<-.5ex>[u]_-s
}
\end{aligned}
\end{equation}
of $(f,s)$ along a morphism $v$ in $\mathcal{C},$ the pair $\{u,s\}$ is jointly extremal epimorphic. (Earlier variants of the name, in the pointed context: strongly split epimorphism \cite{bourn-monad}; regular point \cite{MMS14}.)

It is a well-known result that a finitely complete category $\mathcal{C}$ is protomodular in the sense of \cite{bourn-proto} if and only if every point in $\mathcal{C}$ is strong (see for example \cite{borceux-bourn}, Lemma 3.1.22). Thus the following notion, which was first considered in the pointed case \cite{schreier_book, S-protomodular}, provides a \emph{relative} version of protomodularity:
\begin{definition}[\cite{mal'tsev-reflection}, Definition 4.3]
A finitely complete category $\mathcal{C}$ is $S$\emph{-protomodular} with respect to a pullback-stable class of points $S$ in $\mathcal{C}$ if:
\begin{enumerate}
\item every point in $S$ is strong;
\item the full subcategory $SPt(\mathcal{C})\subseteq Pt(\mathcal{C})$ whose objects are the points in $S$ is closed under finite limits in $Pt(\mathcal{C}).$ (A class $S$ satisfying this condition is called \emph{point-congruous} in \cite{mal'tsev-reflection, partial, 3x3}.)
\end{enumerate}
\end{definition}
The stability of $S$ under pullbacks (in $\mathcal{C}$) means that, in any pullback diagram \eqref{eqn:point_pullback} in which $(f,s)$ is in $S,$ the induced point $(f^\prime,s^\prime)$ is also in $S.$

Condition $(2)$ is a technical requirement that is sometimes omitted from the general definition of $S$-protomodularity (see for example \cite{partial}, Definition 8.5, or \cite{3x3}, Definition 5.3), but which will be essential for our purposes. For example, it implies that the trivial point $\xymatrix{1 \ar@<.5ex>[r] &{1,} \ar@<.5ex>[l]}$ and thus (by pullback stability) any isomorphism $\xymatrix{X \ar@<.5ex>[r]^-f &{Y,} \ar@<.5ex>[l]^-{f^{-1}}}$ belongs to the class $S.$ Moreover, if $\mathcal{C}$ is pointed, it guarantees that the Split Short Five Lemma for points in the class $S$ holds in $\mathcal{C}$: see \cite{S-protomodular}, Proposition 3.2, or \cite{schreier_book}, Proposition 8.1.2.

Now, every protomodular category is a Mal'tsev category (\cite{bourn96}, Proposition 17), and, by a classical result of Bourn, a finitely complete category $\mathcal{C}$ is a Mal'tsev category if and only if every fibre $Pt_B(\mathcal{C})$ of the codomain functor $Pt(\mathcal{C})\rightarrow\mathcal{C},$ $\xymatrix{{(A} \ar@<.5ex>[r]^-f &{B)} \ar@<.5ex>[l]^-s }\mapsto B,$ is \emph{unital} \cite{bourn96}, a fact which amounts to the following property: For every downward pullback in $\mathcal{C}$
\begin{equation}
\begin{aligned}
\label{eqn:point-product}
\xymatrix{
P \pullback \ar@<.5ex>[r]^-{g^\prime} \ar@<-.5ex>[d]_-{f^\prime} &A \ar@<.5ex>[l]^{t^\prime} \ar@<-.5ex>[d]_-f \\
C \ar@<.5ex>[r]^-g \ar@<-.5ex>[u]_-{s^\prime} &B \ar@<.5ex>[l]^t \ar@<-.5ex>[u]_-s
}
\end{aligned}
\end{equation}
of a point $(f,s)$ along a point $(g,t),$ the induced morphisms $s^\prime$ and $t^\prime$ are jointly extremal epimorphic (see \cite{bourn96}, Proposition 10).

In the same spirit of $S$-protomodularity, then, the following definition provides an $S$-version of the Mal'tsev property:
\begin{definition}[\cite{mal'tsev-reflection}, Definition 4.3]
A finitely complete category $\mathcal{C}$ is an $S$\emph{-Mal'tsev} category with respect to a pullback-stable class of points $S$ in $\mathcal{C}$ if:
\begin{enumerate}
\item for every pullback diagram \eqref{eqn:point-product} with $(f,s)\in S,$ the pair $\{s^\prime,t^\prime\}$ is jointly extremal epimorphic;
\item $SPt(\mathcal{C})$ is closed under finite limits in $Pt(\mathcal{C}).$
\end{enumerate}
\end{definition}
Unsurprisingly, we have:
\begin{proposition}[\cite{mal'tsev-reflection}, Theorem 4.1]
Any $S$-protomodular category is an $S$-Mal'tsev category.
\end{proposition}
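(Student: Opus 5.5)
Condition (2) is identical in the two definitions, so only condition (1) of $S$-Mal'tsev needs checking. Fix a downward pullback \eqref{eqn:point-product} with $(f,s)\in S$, and let $m\colon M\rightarrowtail P$ be a monomorphism through which both $s^\prime$ and $t^\prime$ factor, say $s^\prime=m\sigma$ and $t^\prime=m\tau$. I will show that $m$ is an isomorphism, using that $(f,s)$ is a strong point.

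First, I regard the left column $(f^\prime,s^\prime)$ of \eqref{eqn:point-product} as the pullback of $(f,s)$ along $g$. Then \eqref{eqn:point-product} is a diagram of the form \eqref{eqn:point_pullback} with $v=g$ and $u=g^\prime$. By pullback stability $(f^\prime,s^\prime)\in S$, but I will not need this. Strongness of $(f,s)$ gives directly that $\{g^\prime,s\}$ is jointly extremal epimorphic onto $A$. This is the wrong pair for our purposes, so I will instead apply strongness to the point $(f^\prime,s^\prime)$, which is strong since it lies in $S$. Pulling it back along the identity of $C$, I get that $\{1_P,s^\prime\}$ is jointly extremal epic. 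That is trivially true and useless, so a better pullback is needed.

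The idea is to pull $(f,s)$ back along a morphism whose pullback is $\{t^\prime,\ldots\}$-related. I consider the pullback of $(f,s)$ along $g$ itself, but viewed with the other section: the square $g^\prime t=t^\prime g$... Concretely, I would pull $(f^\prime,s^\prime)\in S$ back along $t\colon B\to C$. This pullback is the point $(f,s)$ again, with comparison map $t^\prime\colon A\to P$, because $g t=1_B$ and pullbacks compose. Strongness of $(f^\prime,s^\prime)$ then says exactly that $\{t^\prime,s^\prime\}$ is jointly extremal epimorphic, which is what we want.

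The key steps are therefore:
\begin{enumerate}
\item Check that $(f^\prime,s^\prime)\in S$ by pullback stability.
\item Verify that the square with top $t^\prime$, bottom $t$, left column $(f,s)$ and right column $(f^\prime,s^\prime)$ is a downward pullback commuting upward. This follows from $f^\prime t^\prime=tf$, $t^\prime s=s^\prime t$, and the pasting lemma: composing with the pullback square along $g$ gives the identity square of $(f,s)$, since $gt=1$ and $g^\prime t^\prime=1$.
\item Apply strongness of $(f^\prime,s^\prime)$, which holds by condition (1) of $S$-protomodularity.
\end{enumerate}

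The main obstacle is step 2, where the pasting argument must be done carefully. The outer rectangle is a pullback, namely the identity, and the right square is a pullback, so the left square is a pullback too.
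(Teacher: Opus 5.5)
Your argument is correct. Since $gt=1_B$ and $g't'=1_A$, pasting with the pullback square defining $P$ shows that the pullback of $(f',s')$ along $t$ is $(f,s)$, with comparison map $t'$. The upward condition $t's=s't$, which you assert without checking, follows at once by composing both sides with $f'$ and with $g'$. Strongness of $(f',s')$ (which lies in $S$ by pullback stability) then gives exactly that $\{t',s'\}$ is jointly extremal epimorphic.

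The paper gives no proof of its own; it only cites Theorem 4.1 of the reference. Your argument is the standard one: it is the fibrewise version of the fact that pointed protomodular categories are unital.

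In a write-up, drop the unused monomorphism $m$ from your first paragraph and the two abandoned attempts in your second paragraph.
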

It is a consequence of the pullback stability of $S$ that two isomorphic points in $Pt(\mathcal{C})$ are either both in $S$ or both outside $S,$ so that the ensuing definition is well posed:
\begin{definition}[\cite{mal'tsev-reflection}, Definitions 4.4 and 4.5]
\label{def:S-special}
Let $\mathcal{C}$ be an $S$-Mal'tsev category.
\begin{itemize}
\item An internal equivalence relation
\begin{equation*}
\xymatrix{
{R} \ar@<.9ex>[r]^-{r_0} \ar@<-.9ex>[r]_-{r_1} &X \ar[l]|{s_0}
}
\end{equation*}
in $\mathcal{C}$ is an $S$\emph{-equivalence relation} if $(r_0,s_0)\in S$ (if and only if $(r_1,s_0)\in S$).

\item A morphism $f\colon X\rightarrow Y$ in $\mathcal{C}$ is $S$\emph{-special} if its kernel relation $\xymatrix{
{\mathrm{Eq}(f)} \ar@<.9ex>[r]^-{f_0} \ar@<-.9ex>[r]_-{f_1} &X \ar[l]|(.40){\Delta_X}
}$ is an $S$-equivalence relation.

(From now on, when $f\colon X\rightarrow Y$ is a map of sets, we shall tacitly use the notation $\mathrm{Eq}(f)$ to denote the standard realization $\{(x,y)\in X\times X:f(x)=f(y)\}$ of its kernel pair.)
\end{itemize}
\end{definition}
\begin{remarks}
\label{rmks:s-special}
If $\mathcal{C}$ is an $S$-Mal'tsev category:
\begin{enumerate}
\item the class of $S$-special morphisms is pullback-stable in $\mathcal{C},$ because so is $S;$
\item since $S$ contains all isomorphisms, every monomorphism in $\mathcal{C}$ is $S$-special;
\item it is generally false that a composition of $S$-special morphisms is $S$-special (a simple counterexample is given by the composition $\xymatrix{{\mathbb{N}} \ar@{^(->}[r] &{\mathbb{Z}} \ar[r] &0}$ in $\mathbf{Mon}$ with respect to the class $S$ of Schreier points, see below), but one can prove that if, for a composition $f=hg,$ both $f$ and $h$ are $S$-special, then $g$ is also $S$-special (see \cite{partial}, Proposition 6.6);
\item it is generally false that if a point $\xymatrix{A \ar@<.5ex>[r]^-f &B \ar@<.5ex>[l]^-s}$ is in $S,$ then $f$ is $S$-special (again in the case of $\mathbf{Mon}$ with the class of Schreier points, a counterexample is given by $\xymatrix{\mathbb{N} \ar@<.5ex>[r] &{0),} \ar@<.5ex>[l]}$ but it is true that if $(f,s)$ is a point in $\mathcal{C}$ and $f$ is $S$-special, then $(f,s)$ belongs to $S$ (see \cite{S-protomodular}, Proposition 6.6).
\end{enumerate}
\end{remarks}
For a fixed object $Y\in\mathcal{C},$ we shall denote by $Sl(\mathcal{C}/Y)$ the full subcategory of the slice category $\mathcal{C}/Y$ whose objects are the $S$-special morphisms with codomain $Y.$ Limits in $Sl(\mathcal{C}/Y)$ are computed as in $\mathcal{C}/Y.$

Then one can prove:
\begin{proposition}[\cite{partial}, Theorem 6.7]
\label{prop:S-special-Maltsev}
If $\mathcal{C}$ is an $S$-Mal'tsev category, for every $Y\in\mathcal{C}$ the category $Sl(\mathcal{C}/Y)$ is a Mal'tsev category.
\end{proposition}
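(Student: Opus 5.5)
We have to show that $Sl(\mathcal{C}/Y)$ is finitely complete and that every reflexive relation in it is an equivalence relation. Equivalently, we can use Bourn's characterisation recalled above: it suffices that in every pullback of points \eqref{eqn:point-product} the two induced sections are jointly extremal epimorphic.

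\emph{Finite limits.} Let $f\colon X\to Y$ and $g\colon Z\to Y$ be $S$-special, and consider their pullback $X\times_Y Z\to Y$. Its kernel relation is $\mathrm{Eq}(f)\times_{\mathrm{Eq}(1_Y)}\mathrm{Eq}(g)$, which is a finite limit in $Pt(\mathcal{C})$ of the points $(f_0,\Delta_X)$, $(g_0,\Delta_Z)$ and the isomorphism point over $Y$. The points $(f_0,\Delta_X)$ and $(g_0,\Delta_Z)$ lie in $S$ by assumption, and the isomorphism point lies in $S$ by the remark after the definition of $S$-protomodularity. Condition (2) of the $S$-Mal'tsev definition then puts the kernel relation in $S$. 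The terminal object $1_Y$ is a monomorphism, hence $S$-special by Remark \ref{rmks:s-special}(2). Equalizers are handled similarly, or via pullbacks and products.

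\emph{Mal'tsev property.} Take a point $(f,s)\colon A\rightleftarrows B$ in $Sl(\mathcal{C}/Y)$, so that $pf=q$ with $p\colon A\to Y$ and $q\colon B\to Y$ both $S$-special, and take another point $(g,t)\colon C\rightleftarrows B$. Since $q=pf$ and $p$, $q$ are $S$-special, Remark \ref{rmks:s-special}(3) gives that $f$ is $S$-special. Then Remark \ref{rmks:s-special}(4) gives $(f,s)\in S$. The pullback of $(f,s)$ along $(g,t)$ is computed in $\mathcal{C}$, because limits in $Sl(\mathcal{C}/Y)$ are computed as in $\mathcal{C}/Y$, and pullbacks in $\mathcal{C}/Y$ over a common object agree with those in $\mathcal{C}$. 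Condition (1) of the $S$-Mal'tsev definition now says that the induced sections $s'$ and $t'$ are jointly extremal epimorphic in $\mathcal{C}$.

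\emph{Transfer to the subcategory.} It remains to see that joint extremal epimorphy in $\mathcal{C}$ implies it in $Sl(\mathcal{C}/Y)$. Suppose $s'$ and $t'$ factor through a monomorphism $m\colon M\to P$ of $Sl(\mathcal{C}/Y)$. Monomorphisms in $Sl(\mathcal{C}/Y)$ are monomorphisms in $\mathcal{C}$, because kernel pairs are computed as in $\mathcal{C}$. Hence $m$ is an isomorphism in $\mathcal{C}$, and therefore in the slice as well. So the fibres $Pt_B(Sl(\mathcal{C}/Y))$ are unital, and $Sl(\mathcal{C}/Y)$ is Mal'tsev.

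The main obstacle is showing that $f$ is $S$-special. This rests on the cancellation result of Remark \ref{rmks:s-special}(3), taken from \cite{partial}, Proposition 6.6. If I had to prove it, I would write $\mathrm{Eq}(f)$ as the pullback of $\mathrm{Eq}(pf)$ along $\mathrm{Eq}(f)\to\mathrm{Eq}(p)$, and use pullback stability of $S$ together with its closure under finite limits.
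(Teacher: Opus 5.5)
Your proof is correct. The paper does not prove this statement itself; it only quotes \cite{partial}, Theorem 6.7. Your argument fills in the cited result using exactly the tools the paper recalls around it: Bourn's fibrewise-unital characterisation, the cancellation property of Remark \ref{rmks:s-special}(3) (which is \cite{partial}, Proposition 6.6), Remark \ref{rmks:s-special}(4), and condition (2) for finite limits. Your key observation is the right one: every split epimorphism in $Sl(\mathcal{C}/Y)$ has $S$-special projection and hence lies in $S$, so condition (1) applies. The transfer of joint extremal epimorphy to the full subcategory is also handled properly.

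A few small repairs are needed.

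\emph{Slice equation.} With $p\colon A\to Y$ and $q\colon B\to Y$, the slice condition on $f$ reads $p=qf$ (and $ps=q$). Your $pf=q$ does not typecheck. The cancellation is then applied to $p=q\circ f$, with the same conclusion.

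\emph{Sketch of the cancellation.} Your closing sketch should read as follows. The point $(f_0,\Delta_A)\colon\mathrm{Eq}(f)\rightleftarrows A$ is the pullback in $Pt(\mathcal{C})$ of $(f\times_Y f,f)\colon(\mathrm{Eq}(p)\rightleftarrows A)\to(\mathrm{Eq}(q)\rightleftarrows B)$ along $(\Delta_B,1_B)\colon(B\rightleftarrows B)\to(\mathrm{Eq}(q)\rightleftarrows B)$. It therefore lies in $S$ by condition (2), since all three points involved are in $S$.

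\emph{General pullbacks in the slice.} Your equaliser step and your claim that monomorphisms of $Sl(\mathcal{C}/Y)$ are monomorphisms of $\mathcal{C}$ both need general pullbacks over an arbitrary object of the slice, not only products over $Y$. These follow by the same computation. The kernel-relation point of such a pullback is the pullback in $Pt(\mathcal{C})$ of the kernel-relation points of the three objects involved.
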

Regarding Barr-exactness, the next result is proved in \cite{3x3}, Proposition 4.9:
\begin{proposition}
\label{prop:special_exact}
Let $\mathcal{C}$ be an $S$-Mal'tsev category with respect to a class of points $S$ which moreover satisfies the following condition $(\mathscr{P})$: For every regular epimorphism $(g,h)$ in $Pt(\mathcal{C}),$ as in the diagram
\begin{equation}
\begin{aligned}
\label{eqn:2-regular}
\xymatrix{
{\mathrm{Eq}(g)} \ar@<.9ex>[r]^-{g_0} \ar@<-.9ex>[r]_-{g_1} \ar@<-.5ex>[d]_-{\widehat{f}}  &{A} \ar[l] \ar@{->>}[r]^-g \ar@<-.5ex>[d]_-{f} &{A^\prime} \ar@<-.5ex>[d]_-{f^\prime} \\
{\mathrm{Eq}(h)} \ar@<.9ex>[r]^-{h_0} \ar@<-.9ex>[r]_-{h_1} \ar@<-.5ex>[u]_-{\widehat{s}}  &{B} \ar[l] \ar@{->>}[r]_h \ar@<-.5ex>[u]_-s &{B^\prime,} \ar@<-.5ex>[u]_-{s^\prime}
}
\end{aligned}
\end{equation}
whose domain $(f,s)$ is in $S,$ the codomain $(f^\prime,s^\prime)$ is in $S$ as soon as the point $(\widehat{f},\widehat{s})$ induced between the kernel pairs is in $S.$

Then, if $\mathcal{C}$ is Barr-exact, for every $Y\in\mathcal{C}$ the category $Sl(\mathcal{C}/Y)$ is also Barr-exact.
\end{proposition}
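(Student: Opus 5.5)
To prove that $Sl(\mathcal{C}/Y)$ is Barr-exact, I would check regularity and then effectiveness of equivalence relations. Since $\mathcal{C}/Y$ is Barr-exact whenever $\mathcal{C}$ is, and limits in $Sl(\mathcal{C}/Y)$ are computed as in $\mathcal{C}/Y$, the plan is to show that the relevant colimits can also be computed in $\mathcal{C}/Y$.

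\emph{Regularity.} Finite limits are already available. Let $f\colon X\to Y$ be $S$-special and $\alpha\colon (X,f)\to (Z,h)$ a morphism in $Sl(\mathcal{C}/Y)$. Take its image factorization $X \twoheadrightarrow I \rightarrowtail Z$ in $\mathcal{C}$; then $I\to Y$ is the composite of a monomorphism with an $S$-special map. I would show $I\to Y$ is $S$-special by applying condition $(\mathscr{P})$ to the regular epimorphism of points from $(\mathrm{Eq}(f),f_0,\Delta_X)$ onto $(\mathrm{Eq}(I\to Y),\ldots)$ induced by $q\colon X\twoheadrightarrow I$:
\begin{itemize}
\item this map is a regular epimorphism of points, because $q\times_Y q$ is a regular epi in the regular category $\mathcal{C}$;
\item the point it induces between kernel pairs is a finite limit of points built from $\mathrm{Eq}(f)$ and is pulled back from $S$-points, so it lies in $S$ by point-congruity.
\end{itemize}
Hence regular epimorphisms of $Sl(\mathcal{C}/Y)$ are those of $\mathcal{C}$, factorizations exist, and pullback-stability is inherited from $\mathcal{C}$.

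\emph{Effectiveness.} Let $R\rightrightarrows X$ be an equivalence relation in $Sl(\mathcal{C}/Y)$ with $f\colon X\to Y$ $S$-special. It is an equivalence relation in $\mathcal{C}$ with $R\subseteq \mathrm{Eq}(f)$, so it has a quotient $q\colon X\twoheadrightarrow Q$ in $\mathcal{C}$, and $f$ factors as $f=\bar f q$. The task is to prove $\bar f$ is $S$-special.
\begin{itemize}
\item Form the regular epimorphism of points $\mathrm{Eq}(f)\to\mathrm{Eq}(\bar f)$ with sections given by diagonals. It is a regular epi since $q\times_Y q$ is.
\item Its kernel pair is $\mathrm{Eq}(q\times_Y q)\cong R\times_Y R$, seen as a relation on $\mathrm{Eq}(f)$.
\item By $(\mathscr{P})$, it suffices that the point $R\times_Y R \rightleftarrows R$, given by the first projection and the diagonal, is in $S$.
\end{itemize}
This is where the $S$-special hypothesis on $R\to Y$ is used: the point is $\mathrm{Eq}(R\to Y)$ restricted along the inclusion $R\times_Y R\subseteq \mathrm{Eq}(R\to Y)$. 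Since $R\times_Y R$ is in fact the whole of $\mathrm{Eq}(R\to Y)$, the point is exactly $(\mathrm{Eq}(R\to Y)\to R,\Delta)$, which lies in $S$.

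Once $\bar f$ is known to be $S$-special, $q$ is a regular epi in $Sl(\mathcal{C}/Y)$ whose kernel pair is $R$, so $R$ is effective.

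I expect the main obstacle to be setting up the diagrams for $(\mathscr{P})$ correctly, that is, identifying the induced point on kernel pairs with a point known to lie in $S$. The regularity step should follow from the same argument with $R=\mathrm{Eq}(\alpha)$.
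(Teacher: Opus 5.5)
Your proposal is correct. The paper gives no argument for this statement; it only cites \cite{3x3}, Proposition 4.9. So there is no in-text proof to compare with, and what you give is a self-contained proof along the natural lines. The key step is applying $(\mathscr{P})$ to the regular epimorphism of points $(q\times_Y q,\,q)$ from $(\mathrm{Eq}(f)\rightleftarrows X)$ to $(\mathrm{Eq}(\bar f)\rightleftarrows Q)$, and this does work.

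A few points are worth tightening.

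\begin{enumerate}
\item \emph{Identifying the induced point.} Under the interchange $((x_1,x_2),(x_1',x_2'))\mapsto((x_1,x_1'),(x_2,x_2'))$, you get $\mathrm{Eq}(q\times_Y q)\cong R\times_Y R$. Under this isomorphism, $\widehat f$ (the restriction of $f_0\times f_0$) and $\widehat s$ (the restriction of $\Delta_X\times\Delta_X$) become the first projection and the diagonal of the kernel pair of $R\to Y$. So the induced point is, up to isomorphism, the kernel-pair point of $R\to Y$, and $S$ is closed under isomorphic points. Your sentence about ``restricting along the inclusion'' is unnecessary; you can state this identification directly.

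\item \emph{Regularity and effectiveness are one lemma.} The lemma is: if $f=\bar f q$ with $f$ $S$-special, $q$ a regular epimorphism in $\mathcal{C}$, and $\mathrm{Eq}(q)\to Y$ $S$-special, then $\bar f$ is $S$-special. In the regularity step, $\mathrm{Eq}(q)=\mathrm{Eq}(\alpha)$ because $I\rightarrowtail Z$ is monic. Also $\mathrm{Eq}(\alpha)$ lies in $Sl(\mathcal{C}/Y)$ because that subcategory is closed under finite limits in $\mathcal{C}/Y$, which is where point-congruity enters. This replaces your vaguer ``finite limit of points built from $\mathrm{Eq}(f)$'' justification.

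\item \emph{Regular epimorphisms in $Sl(\mathcal{C}/Y)$.} Your claim that these are exactly the regular epimorphisms of $\mathcal{C}$ needs one line. A regular epimorphism $p$ of $Sl(\mathcal{C}/Y)$ factors as $p=mq$ through its $\mathcal{C}$-image $I$, which you have shown lies in $Sl(\mathcal{C}/Y)$. By extremality of regular epimorphisms, the monomorphism $m$ must then be invertible. The converse holds because kernel pairs in $Sl(\mathcal{C}/Y)$ are computed in $\mathcal{C}$.

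\item \emph{Hypotheses actually used.} Your argument uses only pullback-stability of $S$, point-congruity and $(\mathscr{P})$. Condition (1) of the $S$-Mal'tsev definition is never needed.
\end{enumerate}
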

A class $S$ satisfying $(\mathscr{P})$ is called \emph{2-regular} in \cite{partial, 3x3}. The notion was introduced in \cite{partial}, Definition 7.12.

As it is explained in \cite{schreier_book, S-protomodular}, motivation for the study of $S$-protomodularity came from the observation that the non-abelian, non-protomodular category $\mathbf{Mon}$ of (small) monoids and monoid homomorphisms shares some homological features with the non-abelian, protomodular category $\mathbf{Gp}$ of (small) groups and group homomorphisms, when the attention is restricted to a specific class of points in $\mathbf{Mon},$ namely the class of Schreier points \cite{MMS13, schreier_book}.

Recall (from \cite{MMS13}, Definition 2.6) that a point $(f,s)$
\begin{equation}
\label{eqn:schreier_point}
\xymatrix{K \ar@{>->}[r]^-k &X \ar@<.5ex>[r]^-f &M \ar@<.5ex>[l]^-s }
\end{equation}
in $\mathbf{Mon}$ (with $(K,k)$ a fixed kernel of $f$) is a \emph{Schreier point} if every $x\in X$ admits a decomposition $x=k(a)\cdot sf(x)$ for a unique $a\in K.$

Then one can prove that the classical correspondence between group actions and points in $\mathbf{Gp}$ extends to a correspondence between monoid actions and Schreier points in $\mathbf{Mon}$ (see \cite{MMS13} and \cite{schreier_book}, Proposition 5.2.2), and that the Split Short Five Lemma holds in $\mathbf{Mon}$ limited to the class $S$ of Schreier points (\cite{schreier_book}, Corollary 2.3.8). Indeed, the category of monoids, and more in general any category of \emph{monoids with operations} (a context which also includes groups, rings, associative algebras, Lie algebras, commutative monoids, semirings, join-semilattices with a bottom element, distributive lattices with a top or a bottom element) is $S$-protomodular with respect to the class $S$ of Schreier points, see \cite{S-protomodular}. The category $\mathbf{Qnd}$ of \emph{quandles} (introduced in \cite{joyce, matveev} for applications in Knot Theory), together with the class $S$ of \emph{acupuncturing} points, is an example of an $S$-Mal'tsev category which is not $S$-protomodular, see \cite{partial}.

Now, the Schreier points \eqref{eqn:schreier_point} in $\mathbf{Mon}$ are a particular case of the broader notion of Schreier points in $\mathbf{Cat}_B,$ which we explore in the next section and which will provide us with the appropriate environment for the study of the cohomology theories \cite{golasinski, hoff}.
\subsection{Schreier points in $\mathbf{Cat}_B$}
Denote by $\mathbf{Cat}$ the category of small categories and functors and by $\mathbf{Cat}_B,$ for every set $B,$ the fibre $\mathbf{Cat}_B=()_0^{-1}(B)$ of the set-of-objects functor
\begin{equation*}
()_0\colon\mathbf{Cat}\rightarrow\mathbf{Set}, \ \xymatrix{{(\underline{X}:X_2\cong X\times_B X} \ar[r]^-{m_{\un{X}}} &{X} \ar@<.9ex>[r]^-{x_0} \ar@<-.9ex>[r]_-{x_1} &{B)} \ar[l]|-{s_0}}\mapsto B.
\end{equation*}

Then, if $B=1$ is a terminal object in $\mathbf{Set},$ $\mathbf{Cat}_B=\mathbf{Cat}_1$ is equivalent to $\mathbf{Mon}.$

For any set $B,$ the functor $()_0$ is an exact fibration, meaning that every fibre $\mathbf{Cat}_B$ is Barr-exact and every change-of-base functor is exact (see \cite{tour}). It is worth noting that, while $\mathbf{Cat}_B$ is Barr-exact, the whole category $\mathbf{Cat}$ is not even regular (a detailed account on epimorphisms in $\mathbf{Cat}$ can be found for example in \cite{foundations}, Chapter VIII).

For a small category $\xymatrix{{\underline{X}:X_2} \ar[r]^-{m_{\un{X}}} &{X} \ar@<.9ex>[r]^-{x_0} \ar@<-.9ex>[r]_-{x_1} &{B,} \ar[l]|-{s_0}}$ we shall denote by $s_0(b)=1_b,$ for every $b\in B,$ the identity arrows, and by $y\cdot x\colon a\rightarrow c$ the composition $m_{\un{X}}(x,y)$ of $x\colon a\rightarrow b$ and $y\colon b\rightarrow c.$
\begin{definition}
\label{def:schreier_point_cat}
A point $(\underline{f},\underline{s})$ in $\mathbf{Cat}_B$
\begin{equation}
\begin{aligned}
\label{eqn:sch-pt-cat}
{
\xymatrix{
{\underline{X}} \ar@<-.5ex>[d]_-{\underline{f}=(f,1_B)}\\
{\underline{Y}} \ar@<-.5ex>[u]_-{\underline{s}=(s,1_B)}
}
}
{\xymatrix{: \\ :}}
{
\xymatrixcolsep{3.5pc}
\xymatrix{
{X_2} \ar[r]^-{m_{\un{X}}} \ar@{.>}@<-.5ex>[d] &{X} \ar@<-.5ex>[d]_-{f} \ar@<.9ex>[r]^-{x_0} \ar@<-.9ex>[r]_-{x_1} &{B} \ar@{=}[d] \ar[l]|{s_0} \\
{Y_2} \ar[r]_-{m_{\un{Y}}} \ar@{.>}@<-.5ex>[u] &{Y} \ar@<-.5ex>[u]_-{s} \ar@<.9ex>[r]^-{y_0} \ar@<-.9ex>[r]_-{y_1} &{B} \ar[l]|{t_0}
}
}
\end{aligned}
\end{equation}
is a \emph{Schreier point} if for every $y\in Y$ the arrow $s(y)\in X$ is cocartesian for $\underline{f}.$
\end{definition}
This notion comes from \cite{mal'tsev-reflection}, where it appears under the name of \emph{points with cofibrant splittings}.

The following characterization holds:
\begin{proposition}
\label{prop:char_schreier_points}
A point \eqref{eqn:sch-pt-cat} in $\mathbf{Cat}_B$ is a Schreier point if and only if for every $x\in X,$ with $x\colon a\rightarrow b,$ there exists a unique $k\in f^{-1}(1_b)$ such that $x=k\cdot sf(x).$
\end{proposition}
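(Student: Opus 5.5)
The approach is to unwind the definition of a cocartesian arrow for the functor $\underline{f}\colon\underline{X}\rightarrow\underline{Y}$, which is the identity on objects, and then compare with the decomposition property. Recall that an arrow $u\colon a\rightarrow c$ in $X$ is cocartesian for $\underline{f}$ when, for every $x\colon a\rightarrow b$ in $X$ and every $w\colon c\rightarrow b$ in $Y$ with $w\cdot f(u)=f(x)$, there is a unique $v\colon c\rightarrow b$ in $X$ with $f(v)=w$ and $v\cdot u=x$. For $u=s(y)$ we have $f(u)=y$, since $fs=1$.

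\emph{Cocartesian implies decomposition.} Let $x\colon a\rightarrow b$ and set $y=f(x)\colon a\rightarrow b$. Apply the cocartesian property of $s(y)\colon a\rightarrow b$ to the pair $x$ and $w=1_b$, which satisfies $1_b\cdot y=f(x)$. This yields a unique $k\colon b\rightarrow b$ with $f(k)=1_b$ and $k\cdot s(y)=x$. Since $sf(x)=s(y)$, this is exactly the unique $k\in f^{-1}(1_b)$ with $x=k\cdot sf(x)$. For uniqueness, any other such $k'$ also satisfies $f(k')=1_b$ and $k'\cdot s(y)=x$, so $k'=k$ by the uniqueness clause.

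\emph{Decomposition implies cocartesian.} Let $y\colon a\rightarrow c$ in $Y$, and let $x\colon a\rightarrow b$ in $X$ and $w\colon c\rightarrow b$ in $Y$ satisfy $w\cdot y=f(x)$.

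For existence, decompose $x=k\cdot s(w\cdot y)=k\cdot s(w)\cdot s(y)$, with $f(k)=1_b$, using that $s$ is a functor. Then $v=k\cdot s(w)$ works, since $f(v)=1_b\cdot w=w$.

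For uniqueness, suppose $v\colon c\rightarrow b$ satisfies $f(v)=w$ and $v\cdot s(y)=x$. Decompose $v=k'\cdot sf(v)=k'\cdot s(w)$, with $k'\in f^{-1}(1_b)$. Then $x=k'\cdot s(w)\cdot s(y)=k'\cdot s(w\cdot y)=k'\cdot sf(x)$. By uniqueness of the decomposition of $x$, we get $k'=k$, hence $v=k\cdot s(w)$.

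The main obstacle is this uniqueness step. The key is to reduce uniqueness of the factorization $v$ to uniqueness of the kernel component of $x$, which uses functoriality of $s$ and the equality $f(v)=w$ forced by the hypothesis. Everything else is bookkeeping of domains and codomains, for instance checking that $k$ is an endomorphism of $b$ because $f(k)=1_b$ and $f$ is the identity on objects.
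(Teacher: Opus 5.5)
Your proof is correct and follows essentially the same route as the paper. In the decomposition-implies-cocartesian direction you use the same witness $k\cdot s(w)$ and the same uniqueness argument: decompose the competing factorization and invoke uniqueness of the kernel component of $x$. The only difference is that you spell out the converse, testing the cocartesian property of $s(f(x))$ against $w=1_b$, which the paper simply calls immediate.
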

\begin{proof}
Suppose that \eqref{eqn:sch-pt-cat} meets the conditions of the proposition and consider arrows $y\colon a\rightarrow b$ in $Y$ and $x\colon a\rightarrow c$ in $X$ such that $f(x)=z\cdot y$ for some $z\colon b\rightarrow c$ in $Y.$ Then, if $k\in f^{-1}(1_c)$ is the unique map such that $x=k\cdot sf(x),$ the map $k\cdot s(z)\colon b\rightarrow c$ satisfies $f\big(k\cdot s(z)\big)=z$ and $k\cdot s(z)\cdot s(y)=k\cdot s(z\cdot y)=k\cdot sf(x)=x$
\begin{equation*}
\centering
\xymatrixrowsep{0.5pc}
\xymatrix{
&a \ar[ld]_-{sf(x)} \ar[r]^-{s(y)} \ar[dd]_-x &b \ar@{.>}[ldd]^-{k\cdot s(z)} \\
c \ar[rd]_-k &\ &\ \\
&c  &\
}
\xymatrixrowsep{2pc}
\xymatrix{
&\ \ar@{--}[d] \\
&\
}
\xymatrix{
&a  \ar[d]_-{f(x)}  \ar[r]^-{y} &b \ar[dl]^-{z} \\
&{c.} &\
}
\end{equation*}
If $x=w\cdot s(y)$ for some other $w\colon b\rightarrow c$ such that $f(w)=z,$ by factorizing $w=k^\prime\cdot sf(w)=k^\prime\cdot s(z)$ for a unique $k^\prime\in f^{-1}(1_c)$ we have $x=w\cdot s(y)=k^\prime\cdot s(z\cdot y)=k^\prime\cdot sf(x)$: then $k^\prime=k$ by the uniqueness of $k,$ and we conclude that $w=k\cdot s(z).$ The converse is immediate.
\end{proof}
It is now clear that, for $B=1,$ a Schreier point in $\mathbf{Cat}_1\cong\mathbf{Mon}$ in the sense of Definition \ref{def:schreier_point_cat} is precisely a Schreier point \eqref{eqn:schreier_point} of monoids in the sense of \cite{MMS13}.

One may further think of Schreier points as functors to $\mathbf{Mon},$ thanks to a variation of the classical Grothendieck construction (\cite{sga1}, Section VI.8). Indeed, observe first that if $\un{f}=(f,1_B)\colon\un{X}\rightarrow\un{Y}$ is a morphism in $\mathbf{Cat}_B,$ then for all $b\in B$ the sets $f^{-1}(1_b)$ are monoids under the composition of $\un{X}$ (the neutral element being $1_b\in X$). Now, let \eqref{eqn:sch-pt-cat} be a Schreier point in $\mathbf{Cat}_B.$ If $y\colon a\rightarrow b$ is a morphism in $Y$ and $k\in f^{-1}(1_a),$ by Proposition \ref{prop:char_schreier_points} there exists a unique $^yk\in f^{-1}(1_b)$ such that the square
\begin{equation*}
\xymatrix{
a \ar[d]_-{s(y)} \ar[r]^-k &a \ar[d]^-{s(y)} \\
b \ar[r]_-{^yk} &b
}
\end{equation*}
commutes. Then, if other morphisms $k^\prime\in f^{-1}(1_a)$ and $z\in Y$ are given, with $z\colon b\rightarrow c,$ the outer rectangles in the diagrams
\begin{equation*}
\begin{aligned}
\xymatrix{
a \ar[d]_-{s(y)} \ar[r]^-k &a \ar[d]_-{s(y)} \ar[r]^-{k^\prime} &a \ar[d]^-{s(y)} \\
b \ar[r]_-{^yk} &b \ar[r]_-{^y(k^\prime)} &{b,}
}
\
\xymatrix{
a \ar[d]_-{s(y)} \ar[r]^-k &a \ar[d]^-{s(y)} \\
b \ar[r]_-{^yk}  \ar[d]_-{s(z)} &b \ar[d]^-{s(z)} \\
c \ar[r]_-{^z(^yk)} &c
}
\end{aligned}
\end{equation*}
are also commutative, meaning that ${^y(k^\prime)}\cdot{}^y(k)={^y(k^\prime\cdot k)}$ and $^z(^yk)={^{(z\cdot y)}k},$ and we end up with a functor
\begin{equation}
\label{eqn:functor_sch_point}
\underline{Y} \rightarrow \mathbf{Mon}, \ \  (y\colon a\rightarrow b) \mapsto \big({^y()}\colon f^{-1}(1_a)\rightarrow f^{-1}(1_b), \ k\mapsto {^yk}\big).
\end{equation}

Conversely, if $\underline{Y}\in\mathbf{Cat}_B$ and $F\colon\underline{Y}\longrightarrow \mathbf{Mon}$ is a functor, define an internal category $\xymatrix{{\underline{X}:X_2} \ar[r] &{X} \ar@<.9ex>[r]^-{x_0} \ar@<-.9ex>[r]_-{x_1} &{B} \ar[l]|-{s_0}}$ by declaring as morphisms $(a\rightarrow b)\in X$ the couples $(\nu, y)$ with $y\colon a\rightarrow b$ in $Y$ and $\nu\in F(b),$ with identity arrows $s_0(b)=(e_{F(b)},1_b)$ and composition
\[
(\mu\in F(c),z\colon b\rightarrow c)\cdot(\nu\in F(b),y\colon a\rightarrow b)=\big(\mu\cdot F(z)(\nu),z\cdot y\big),
\]
where $e_{F(b)}$ is the neutral element of the monoid $F(b),$ $\mu\cdot F(z)(\nu)$ denotes the product in $F(c),$ and $z\cdot y$ is the composition in $\underline{Y}.$ Set also $x_0(\nu,y)=y_0(y)$ and $x_1(\nu,y)=y_1(y).$ (If $F$ is a functor to $\mathbf{Ab},$ the category $\un{X}$ is called the \emph{crossed product} of $F$ and $\un{Y},$ in the terminology of \cite{cocat}: the notion goes back to \cite{ehresmann}.)

Then clearly $\xymatrix{X \ar@<.5ex>[r]^-f &{Y,} \ar@<.5ex>[l]^-s }$ with $f(\nu,y)=y$ and $s(y\colon a\rightarrow b)=(e_{F(b)},y),$ yields a point $\xymatrix{{\underline{X}} \ar@<.5ex>[r]^-{\underline{f}} &{\underline{Y}} \ar@<.5ex>[l]^-{\underline{s}} }$ in $\mathbf{Cat}_B,$ which is a Schreier point by Proposition \ref{prop:char_schreier_points}, because every $(\nu,y\colon a\rightarrow b)\in X$ admits a decomposition $(\nu,y)=(\nu,1_b)\cdot(e_{F(b)},y)=(\nu,1_b)\cdot sf(\nu,y)$ for a unique $(\nu,1_b)\in f^{-1}(1_b).$

It is not difficult to see that this establishes an equivalence:
\begin{proposition}[cf. \cite{mal'tsev-reflection}, Lemma 3.1]
\label{prop:schreier_functors}
The full subcategory $SPt_{\underline{Y}}(\mathbf{Cat}_B)\subseteq Pt_{\underline{Y}}(\mathbf{Cat}_B)$ of Schreier points in $\mathbf{Cat}_B$ with codomain $\underline{Y}$ is equivalent to the functor category $\mathrm{Fun}(\underline{Y},\mathbf{Mon}).$
\end{proposition}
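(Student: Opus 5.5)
We will show that the two constructions described just before the statement extend to functors $\Phi\colon SPt_{\underline{Y}}(\mathbf{Cat}_B)\rightarrow\mathrm{Fun}(\underline{Y},\mathbf{Mon})$ and $\Psi\colon\mathrm{Fun}(\underline{Y},\mathbf{Mon})\rightarrow SPt_{\underline{Y}}(\mathbf{Cat}_B)$, and that they are mutually quasi-inverse.

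\textbf{The functor $\Phi$.} On objects, $\Phi$ sends a Schreier point $(\underline{f},\underline{s})$ to the functor \eqref{eqn:functor_sch_point}, namely $b\mapsto f^{-1}(1_b)$ and $y\mapsto{}^y()$; functoriality in $\underline{Y}$ was checked above. For a morphism of points $\underline{g}\colon(\underline{X},\underline{f},\underline{s})\rightarrow(\underline{X}',\underline{f}',\underline{s}')$ over $1_{\underline{Y}}$, we have $f'g=f$ and $gs=s'$. Hence $g$ restricts to monoid homomorphisms $g_b\colon f^{-1}(1_b)\rightarrow f'^{-1}(1_b)$. For naturality, apply $g$ to the square $s(y)\cdot k={}^yk\cdot s(y)$. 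This gives $s'(y)\cdot g(k)=g({}^yk)\cdot s'(y)$, and by the uniqueness in Proposition \ref{prop:char_schreier_points} (applied in $\underline{X}'$) we get $g({}^yk)={}^y(g(k))$.

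\textbf{The functor $\Psi$.} On objects, $\Psi$ is the crossed-product construction. On morphisms, a natural transformation $\alpha\colon F\Rightarrow G$ is sent to $(\nu,y)\mapsto(\alpha_b(\nu),y)$. This map preserves composition precisely because $\alpha$ is natural: $\alpha_c(\mu\cdot F(z)(\nu))=\alpha_c(\mu)\cdot G(z)(\alpha_b(\nu))$. It also commutes with $f$ and $s$, since $\alpha_b(e)=e$.

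\textbf{The isomorphism $\Phi\Psi\cong\mathrm{Id}$.} In the crossed product, $f^{-1}(1_b)=\{(\nu,1_b)\}\cong F(b)$ as monoids. The induced action is computed from
\[
(e,y)\cdot(\nu,1_a)=(F(y)(\nu),y)=(F(y)(\nu),1_b)\cdot(e,y),
\]
which shows ${}^y\nu=F(y)(\nu)$. These isomorphisms are clearly natural in $F$, so $\Phi\Psi\cong\mathrm{Id}$.

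\textbf{The isomorphism $\Psi\Phi\cong\mathrm{Id}$.} This is the main step. Given a Schreier point $\underline{X}$, define
\[
\theta\colon\Psi\Phi(\underline{X})\rightarrow\underline{X},\qquad (k,y)\mapsto k\cdot s(y).
\]
By Proposition \ref{prop:char_schreier_points}, every $x\colon a\rightarrow b$ factors uniquely as $k\cdot sf(x)$, so $\theta$ is a bijection on arrows; it is the identity on objects. It commutes with the projections and sections, since $f(k\cdot s(y))=y$ and $\theta(e,y)=s(y)$. The point to verify is that $\theta$ preserves composition:
\[
\theta(k'\cdot{}^yk,\;z\cdot y)=k'\cdot{}^yk\cdot s(z)\cdot s(y)\quad\text{should equal}\quad k'\cdot s(z)\cdot k\cdot s(y).
\]
This requires the interchange $s(z)\cdot k={}^zk\cdot s(z)$, where we write the multiplication of $\Psi\Phi(\underline{X})$ with the action indexed by the arrow $z$ acting on the fibre element. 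This is exactly the defining square of ${}^zk$, and the indices should be arranged so as to match the composition formula of the crossed product, $(\mu,z)\cdot(\nu,y)=(\mu\cdot F(z)(\nu),z\cdot y)$. A bijective identity-on-objects functor is an isomorphism in $\mathbf{Cat}_B$. Finally, naturality of $\theta$ with respect to morphisms of points follows from $g(k\cdot s(y))=g(k)\cdot s'(y)$.

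The only genuinely delicate part is the last step: keeping the conventions in the crossed-product multiplication consistent, so that the element of $F(b)$ attached to an arrow $y\colon a\rightarrow b$ is transported by $F(z)$ and composes on the correct side. Everything else reduces to the uniqueness clause of Proposition \ref{prop:char_schreier_points}.
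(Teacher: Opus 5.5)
Your proposal is correct and follows the paper's route. The paper only records the two constructions (the functor ${}^{y}(-)$ attached to a Schreier point, and the crossed product) and asserts that they are mutually quasi-inverse; you supply the routine verifications it leaves implicit. The one slip is in the first display of your last step: ${}^{y}k$ is not defined there, since $k\in f^{-1}(1_b)$ and $y\colon a\rightarrow b$, and it should read ${}^{z}k$, because $(k',z)\cdot(k,y)=(k'\cdot{}^{z}k,\,z\cdot y)$. With this correction, $\theta$ preserves composition directly by ${}^{z}k\cdot s(z)=s(z)\cdot k$, and no ambiguity of conventions remains.
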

We already mentioned that the relevance of Schreier points, in the case $B=1,$ comes from the fact that there is an equivalence between Schreier points in $\mathbf{Mon}$ and monoid actions. It is shown in \cite{a1} that one can introduce notions of action and semidirect product also in the context of $\mathbf{Cat}_B,$ for an arbitrary set $B,$ generalizing the classical one for monoids, and study the relation between actions and Schreier points in $\mathbf{Cat}_B$ accordingly.

Indeed, fix a small category $\xymatrix{{\underline{X}:X_2} \ar[r]^-{m_{\un{X}}} &{X} \ar@<.9ex>[r]^-{x_0} \ar@<-.9ex>[r]_-{x_1} &{B,} \ar[l]|-{s_0}}$ and consider for any $a,b\in B$ the set $\un{X}(a,b)$ of all arrows $x\in X$ with domain $x_0(x)=a$ and codomain $x_1(x)=b.$ Then, for every $b\in B,$ the set $\un{X}(b,b)$ is a monoid under the composition of $\un{X},$ with neutral element $1_b=s_0(b),$ and we can define
\begin{equation*}
E_{\un{X}}(a,b)=\mathbf{Mon}\big(\un{X}(a,a),\un{X}(b,b)\big)
\end{equation*}
and a small category
\begin{equation*}
\xymatrix{E({\underline{X}):E_2} \ar[r]^-{m_E} &{E(X)} \ar@<.9ex>[r]^-{d_E} \ar@<-.9ex>[r]_-{c_E} &{B,} \ar[l]|(.40){u_E}}
\end{equation*}
whose arrows $f\in E(X)$ with domain $d_E(f)=a$ and codomain $c_E(f)=b$ are given by $E_{\un{X}}(a,b).$ Of course, units in $E(\un{X})$ are defined by $u_E(b)=id_{\un{X}(b,b)}\in E_{\un{X}}(b,b),$ and composition is given by the composition of morphisms in $\mathbf{Mon}.$

Observe that for all $a,b\in B$ there is always at least one morphism $a\rightarrow b$ in $E(\un{X}),$ given by the trivial morphism $\un{X}(a,a)\rightarrow\un{X}(b,b),$ $x\mapsto 1_b$ (so that $E(\un{X})$ is never \emph{totally disconnected} if $|B|\geq 2,$ see below). Moreover, if $B=1,$ the category $E(\un{X})$ coincides with the monoid $\big(\mathrm{End}(X),\circ,id_X\big)$ of monoid endomorphisms of $X.$

\begin{definition}[\cite{a1}]
\label{def:action_Cat_B}
If $\un{X},$ $\un{Y}\in\mathbf{Cat}_B$:
\begin{enumerate}
\item an \emph{action} of $\un{Y}$ on $\un{X}$ is a morphism $\un{\omega}\colon\un{Y}\rightarrow E(\un{X})$ in $\mathbf{Cat}_B,$ i.e. an internal functor
\begin{equation*}
\xymatrix{
{Y_2} \ar@{.>}[d] \ar[r]^-{m_{\un{Y}}} &{Y} \ar[d]_-{\omega} \ar@<.9ex>[r]^-{y_0} \ar@<-.9ex>[r]_-{y_1} &B \ar@{=}[d] \ar[l]|-{t_0} \\
{E_2} \ar[r]_-{m_E} &{E(X)} \ar@<.9ex>[r]^-{d_E} \ar@<-.9ex>[r]_-{c_E} &{B;} \ar[l]|(.40){u_E}
}
\end{equation*}
\item if $\un{\omega}\colon\un{Y}\rightarrow E(\un{X})$ is an action of $\un{Y}$ on $\un{X},$ the \emph{semidirect product} $\un{X}\rtimes_{\un{\omega}}\un{Y}$ in $\mathbf{Cat}_B$ is the small category
\begin{equation*}
\xymatrix{{\un{X}\rtimes_{\un{\omega}}\un{Y}:P_2(\un{X},\un{Y})} \ar[r]^-{\mu} &{P(\un{X},\un{Y})} \ar@<.9ex>[r]^-{d_P} \ar@<-.9ex>[r]_-{c_P} &{B} \ar[l]|(.37){u_P}}
\end{equation*}
where:
\begin{itemize}
\item an arrow $g\in P(\un{X},\un{Y})$ with domain $d_P(g)=a$ and codomain $c_P(g)=b$ is a couple $(x,y)$ with $y\in\un{Y}(a,b)$ and $x\in\un{X}(b,b);$
\item units are given by $u_P(b)=\big(s_0(b),t_0(b)\big),$ where $s_0$ is the unit map of $\un{X}$;
\item composition is defined on
\begin{equation*}
P_2(\un{X},\un{Y})\cong\big\{\big((x,y),(x',y')\big):y\in\un{Y}(a,b), \ y'\in\un{Y}(b,c)\big\}
\end{equation*}
by
\begin{equation*}
\begin{split}
(x',y')\cdot(x,y)&=\mu\big((x,y),(x',y')\big)\\
&=\Big(m_{\un{X}}\big(\omega(y')(x),x'\big),m_{\un{Y}}(y,y')\Big)\\
&=\big(x'\cdot\omega(y')(x),y'\cdot y\big)
\end{split}
\end{equation*}
(observe that indeed $\omega(y'\colon b\rightarrow c)\colon\un{X}(b,b)\rightarrow\un{X}(c,c)$).
\end{itemize}
 \end{enumerate}
\end{definition}
It is immediate that, when $B=1,$ the above notions coincide with the usual notions of a monoid action of $Y$ on $X$ (namely, a monoid homomorphism $\omega\colon Y\rightarrow\mathrm{End}(X)$) and of the semidirect product of monoids $X\rtimes_{\omega}Y.$ Moreover, observe that if $|B|\geq 2,$ the set of arrows $P(\un{X},\un{Y})$ of $\un{X}\rtimes_{\un{\omega}}\un{Y}$ is different, in general, from the set of arrows of the binary product $\un{X}\times\un{Y}$ in $\mathbf{Cat}_B$: the two coincide  when both $\un{X}$ and $\un{Y}$ are totally disconnected (i.e., there is no arrow $a\rightarrow b$ in $\un{X}$ and $\un{Y}$ if $a\neq b$), and the action $\un{\omega}$ is trivial (i.e., $\omega(y)=id_{\un{X}(b,b)}$ for all $b\in B$). Finally, observe that $\xymatrix{{\un{X}\rtimes_{\un{\omega}}\un{Y}} \ar@<.5ex>[r]^-{\un{\pi}} &{\un{Y}} \ar@<.5ex>[l]^-{\un{\sigma}}}$ is always a Schreier point in $\mathbf{Cat}_B,$ where $\pi(x,y)=y$ and $\sigma(y\colon a\rightarrow b)=\big(1_b=s_0(b),y\big),$ and that the domain $\un{X}$ of the Schreier point on $\un{Y}$ correspondig to a functor $F\colon\un{Y}\rightarrow\mathbf{Mon}$ as in Proposition \ref{prop:schreier_functors} can be now described as the semidirect product $\un{X}\cong \un{F}^+\rtimes_{\un{\omega}}\un{Y},$ where $\un{F}^+$ (in the notation of \cite{cocat, golasinski, hoff}) is the totally disconnected category on $B$ whose set of arrows is the disjoint union $\bigsqcup_{b\in B}F(b),$ and $\un{\omega}$ is the action $\omega(y\colon a\rightarrow b)=F(y)\colon F(a)\rightarrow F(b).$

Crucially, by \cite{mal'tsev-reflection}, Theorem 3.1 and Propositions 3.4, 3.6 (see also \cite{S-protomodular} for the case $B=1$), we have:
\begin{theorem}
For every set $B,$ the category $\mathbf{Cat}_B$ is $S$-protomodular with respect to the class $S$ of Schreier points.
\end{theorem}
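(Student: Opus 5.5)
To prove that $\mathbf{Cat}_B$ is $S$-protomodular for the class $S$ of Schreier points, I would check the three requirements of the definition directly, using the characterization of Proposition~\ref{prop:char_schreier_points}: every $x\colon a\rightarrow b$ factors uniquely as $x=k\cdot sf(x)$ with $k\in f^{-1}(1_b)$. Limits in $\mathbf{Cat}_B$ are computed on arrows as limits of sets with the identity on objects, so every condition becomes an elementwise statement about arrows.

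\emph{Pullback stability.} Take a pullback \eqref{eqn:point_pullback} along $\un{v}\colon\un{C}\rightarrow\un{Y}$. Its arrows are pairs $(x,c)$ with $f(x)=v(c)$, and $s'(c)=(sv(c),c)$. Given such $(x,c)$ with codomain $b$, write $x=k\cdot sf(x)=k\cdot sv(c)$. Then
\[
(x,c)=(k,1_b)\cdot(sv(c),c),
\]
and $(k,1_b)$ lies in the fibre of $f'$ over $1_b$. Uniqueness follows from uniqueness in $\un{X}$, since the fibre of $f'$ over $1_b$ is $\{(k,1_b): f(k)=1_b\}$.

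\emph{Closure under finite limits in $Pt(\mathbf{Cat}_B)$.} The terminal point is trivially Schreier. For a pullback of points $(f_1,s_1)\rightarrow(f,s)\leftarrow(f_2,s_2)$ over $\un{Y}_1\rightarrow\un{Y}\leftarrow\un{Y}_2$, an arrow of the limit is $(x_1,x_2)$ with compatible images. I would factor each component as $x_i=k_i\cdot s_if_i(x_i)$. Compatibility of $k_1$ and $k_2$ in $\un{X}$ then follows from uniqueness of the decomposition there, because the maps of points commute with the sections. This gives existence, and uniqueness is componentwise. Equivalently, one could pass through Proposition~\ref{prop:schreier_functors}: Schreier points over $\un{Y}$ correspond to functors $\un{Y}\rightarrow\mathbf{Mon}$, and limits of such functors are computed pointwise. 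However, maps between points with different bases make the direct computation cleaner.

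\emph{Strong points.} This is the main step. By pullback stability, it suffices to show that for every Schreier point $(f,s)$, the pair $\{k,s\}$ is jointly extremal epimorphic, where $k$ is the inclusion of the subcategory $\un{K}=f^{-1}(\text{identities})$. Here $\un{K}$ is totally disconnected, with arrows $\bigsqcup_b f^{-1}(1_b)$. Every arrow $x$ equals $k\cdot sf(x)$, a composite of an arrow in the image of $\un{K}$ and one in the image of $s$. Hence any subcategory of $\un{X}$ containing both images, with all objects $B$, is all of $\un{X}$.

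Since in $\mathbf{Cat}_B$ extremal epimorphisms are exactly the identity-on-objects functors that are jointly generating on arrows (monomorphisms being injective on arrows), this gives joint extremal epicity of $\{u,s\}$ for every pullback. Indeed, the pulled-back point is again Schreier, and its kernel maps into $\un{X}$ through $u$. The only delicate point I foresee is justifying this description of extremal monomorphisms and epimorphisms in $\mathbf{Cat}_B$. I would handle it using Barr-exactness of $\mathbf{Cat}_B$: regular and extremal epimorphisms coincide, and a monomorphism through which two jointly generating families factor must be surjective on arrows.
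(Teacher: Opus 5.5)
Your proof is correct, but it follows a different route from the paper. The paper gives no argument of its own: the theorem is imported from \cite{mal'tsev-reflection} (Theorem 3.1 and Propositions 3.4, 3.6, with \cite{S-protomodular} for $B=1$). There these points are studied as \emph{points with cofibrant splittings}, i.e.\ via the cocartesian condition of Definition~\ref{def:schreier_point_cat}. You instead work entirely with the unique factorization $x=k\cdot sf(x)$ of Proposition~\ref{prop:char_schreier_points}. With it you check pullback stability, closure under the terminal point and under pullbacks in $Pt(\mathbf{Cat}_B)$, and strongness, all by direct computations on arrows; this is the natural extension of the classical argument for Schreier points of monoids. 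Your version is self-contained and makes explicit what is used about $\mathbf{Cat}_B$: limits are computed on arrows with the identity on objects, and everything reduces to the fibres of $f$ over identities. The citation instead places the statement inside the general framework of points with cofibrant splittings developed in \cite{mal'tsev-reflection}.

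Two points in your write-up should be adjusted. First, in the strongness step the reduction to the pair $\{k,s\}$ is not a consequence of pullback stability. What you actually use is that the inclusion of the fibres over identities factors through $u$: $(k,1_b)$ is an arrow of $\un{P}$ whenever $f(k)=1_b$. Equivalently, $\un{t_0}$ factors through $\un{v}$, because the discrete category $\un{B}$ is initial in $\mathbf{Cat}_B$. In fact $x=u(k,1_b)\cdot s\big(f(x)\big)$ shows at once that the images of $u$ and $s$ generate $\un{X}$.

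Second, Barr-exactness is not what is needed to conclude. The relevant facts are these:
\begin{itemize}
\item monomorphisms in $\mathbf{Cat}_B$ are exactly the functors injective on arrows, since each functor $\un{X}\mapsto\un{X}(a,b)$ is representable by the free category on one arrow $a\rightarrow b$;
\item so a monomorphism is, up to isomorphism, a wide subcategory, and hence closed under composition;
\item an identity-on-objects functor that is bijective on arrows is an isomorphism.
\end{itemize}
With these facts, joint extremal epimorphicity of $\{u,s\}$ is immediate.
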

From now on, when writing $S$ in $\mathbf{Cat}_B$ we shall always mean the class of Schreier points.

Observe that if a morphism $\underline{f}=(f,1_B)\colon\underline{X}\rightarrow\underline{Y}$ in $\mathbf{Cat}_B$ is $S$-special (Definition \ref{def:S-special}), then for every $b\in B$ the monoid $f^{-1}(1_b)\subseteq X$ is a group. Indeed, every $(x,1_b)\in\mathrm{Eq}(f)$ factorizes as $(x,1_b)=(1_b,z)\cdot(x,x)=(x,z\cdot x)$ for a unique $z\in X,$ which is thus a left inverse of $x\in f^{-1}(1_b)$ (the fact that every element is left-invertible then implies that these left inverses are also inverses on the right). It follows that the functor $\underline{X}\rightarrow \mathbf{Mon}$ associated to the Schreier point $\xymatrix{{\mathrm{Eq}(\underline{f})} \ar@<.5ex>[r]^-{\underline{f}_0} &{\underline{X}} \ar@<.5ex>[l]^-{\underline{\Delta}_X} }$ as in \eqref{eqn:functor_sch_point} is actually a functor $\underline{X}\rightarrow \mathbf{Gp}.$

Also, if $\underline{f}\colon\underline{X}\rightarrow\underline{Y}$ is $S$-special, for every $(x,y)\in\mathrm{Eq}(f)$ with $x,y\colon a\rightarrow b$ there is a unique element $q(x,y)\in f^{-1}(1_b)$ such that $q(x,y)\cdot x=y.$ This gives a function $q\colon\mathrm{Eq}(f)\rightarrow K(f)=\bigsqcup_{b\in B}f^{-1}(1_b),$ which is easily seen to satisfy the following properties:
\begin{proposition}
\label{prop:prop_retraction}
For all $k\in f^{-1}(1_b),$ $x,y\colon a\rightarrow b$ and $x^\prime,y^\prime\colon b\rightarrow c$ such that $f(x)=f(y)$ and  $f(x^\prime)=f(y^\prime)$:
\begin{enumerate}
\item $q(1_b,k)=k;$
\item $q(x,x)=1_b;$
\item $q\big((x^\prime,y^\prime)\cdot(x,y)\big)=q(x^\prime,y^\prime)\cdot q\big(x^\prime,x^\prime\cdot q(x,y)\big).$
\end{enumerate}
\end{proposition}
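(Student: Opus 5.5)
The plan is to reduce everything to the uniqueness clause in the definition of $q$: for $(x,y)\in\mathrm{Eq}(f)$ with $x,y\colon a\rightarrow b$, $q(x,y)$ is the \emph{unique} $k\in f^{-1}(1_b)$ with $k\cdot x=y$. Existence and uniqueness come from the $S$-speciality of $\underline{f}$: the point $(\underline{f}_0,\underline{\Delta}_X)$ on $\mathrm{Eq}(\underline{f})$ is a Schreier point. By Proposition \ref{prop:char_schreier_points}, $(x,y)=(1_b,k)\cdot(x,x)$ for a unique $(1_b,k)\in f_0^{-1}(1_b)$, that is, $k\in f^{-1}(1_b)$ and $y=k\cdot x$. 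So, to prove each identity, I will exhibit an element of the appropriate fibre $f^{-1}(1_{\ast})$ which sends the first component to the second, and then invoke uniqueness.

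For (1): $k\in f^{-1}(1_b)$ and $k\cdot 1_b=k$. Also $f(1_b)=1_b=f(k)$, so $(1_b,k)\in\mathrm{Eq}(f)$. Hence $q(1_b,k)=k$. For (2): $1_b\cdot x=x$ with $1_b\in f^{-1}(1_b)$, so $q(x,x)=1_b$.

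For (3), the composite $(x',y')\cdot(x,y)=(x'\cdot x,\,y'\cdot y)$ lies in $\mathrm{Eq}(f)$ with components $a\rightarrow c$. I need to check two things about $r:=q(x',y')\cdot q\big(x',x'\cdot q(x,y)\big)$.

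First, the right-hand term is well defined. We have $f\big(x'\cdot q(x,y)\big)=f(x')\cdot 1_b=f(x')$, so $\big(x',x'\cdot q(x,y)\big)\in\mathrm{Eq}(f)$, with both components $b\rightarrow c$. Hence $q\big(x',x'\cdot q(x,y)\big)\in f^{-1}(1_c)$. Since $f^{-1}(1_c)$ is a monoid, $r\in f^{-1}(1_c)$.

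Second, $r\cdot x'\cdot x=y'\cdot y$. By definition of $q$, $q\big(x',x'\cdot q(x,y)\big)\cdot x'=x'\cdot q(x,y)$. Using associativity and then $q(x,y)\cdot x=y$ and $q(x',y')\cdot x'=y'$, I get
\[
r\cdot x'\cdot x=q(x',y')\cdot x'\cdot q(x,y)\cdot x=q(x',y')\cdot x'\cdot y=y'\cdot y.
\]
Uniqueness then gives $q\big((x',y')\cdot(x,y)\big)=r$.

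The only point needing care is that the pair fed into $q$ in (3) really lies in $\mathrm{Eq}(f)$, together with the endpoint bookkeeping. The rest is direct, so I do not expect a substantive obstacle.
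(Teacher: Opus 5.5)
Your proof is correct and follows the same route as the paper, which only says these properties are ``easily seen'' from the defining property of $q$: each identity follows from the uniqueness of $q(x,y)\in f^{-1}(1_b)$ with $q(x,y)\cdot x=y$, and you correctly derive that uniqueness from the Schreier point $(\underline{f}_0,\underline{\Delta}_X)$. For (3), your check that $\big(x',x'\cdot q(x,y)\big)\in\mathrm{Eq}(f)$, together with the computation $r\cdot x'\cdot x=q(x',y')\cdot x'\cdot q(x,y)\cdot x=y'\cdot y$, settles the claim.
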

In the case $B=1,$ the function $q$ is usually called \emph{Schreier retraction} (see \cite{schreier_book, S-protomodular}), a name which we shall freely adopt also in the general case of $\mathbf{Cat}_B.$ Proposition \ref{prop:prop_retraction} generalizes to $\mathbf{Cat}_B$ the analogous properties of the Schreier retraction in $\mathbf{Mon},$ see \cite{schreier_book}, Proposition 2.1.5, or \cite{S-protomodular}, Proposition 4.6.

Finally, observe that by defining ${^xk}=q(x,x\cdot k)$ for all $x:a\rightarrow b$ in $X$ and $k\in f^{-1}(1_a),$ so that $x\cdot k={^xk}\cdot x,$ one obtains a functor
\begin{equation}
\label{eqn:functor_s-special}
\underline{X} \rightarrow \mathbf{Gp}, \ \  (x\colon a\rightarrow b) \mapsto \big({^x()}\colon f^{-1}(1_a)\rightarrow f^{-1}(1_b), \ k\mapsto {^xk}\big),
\end{equation}
which is isomorphic to the functor $\underline{X}\rightarrow \mathbf{Gp}$ associated to the Schreier point $\xymatrix{{\mathrm{Eq}(\underline{f})} \ar@<.5ex>[r]^-{\underline{f}_0} &{\underline{X}} \ar@<.5ex>[l]^-{\underline{\Delta}_X} }$ as in \eqref{eqn:functor_sch_point}.
\begin{remark}
\label{rmk:actions_and_schreier_points_Cat_B}
If $\un{f}\colon\un{X}\rightarrow\un{Y}$ is a morphism in $\mathbf{Cat}_B,$ the disjoint union $K(f)=\bigsqcup_{b\in B}f^{-1}(1_b)$ is a realization of the pullback $B\times_YX$ in $\mathbf{Set}$ of $f$ along the map of units $t_0$ of $\un{Y}.$ The latter extends to a pullback
\begin{equation}
\begin{aligned}
\label{eqn:K_f}
\xymatrixrowsep{1.5pc}
\xymatrixcolsep{1.5pc}
\xymatrix{
{K(\un{f})}\ar[d] \pullback \ar[r]^-{\un{j}} &{\un{X}} \ar[d]^-{\un{f}} \\
{\un{B}} \ar[r]_-{\un{t_0}} &{\un{Y}}
}
\end{aligned}
\end{equation}
in $\mathbf{Cat}_B,$ where $K(\un{f})$ is the category $\xymatrix{{K(f)} \ar@<.9ex>[r]^-{x_0j} \ar@<-.9ex>[r]_-{x_1j} &{B} \ar[l]|(.40){\beta_0}}$ (with $\beta_0=(1_B,s_0):B\rightarrow K(f)=B\times_YX$) and $\un{B}$ denotes the discrete category on $B.$ If $\un{f}$ is $S$-special, the category $K(\un{f})$ is a groupoid.

Then, if $\mathrm{Act}_{\un{Y}},$ for a fixed $\un{Y}\in\mathbf{Cat}_B,$ denotes the category of all pairs $(\un{X},\un{\omega})$ of objects $\un{X}\in\mathbf{Cat}_B$ endowed with an action $\un{\omega}\colon\un{Y}\rightarrow E(\un{X})$ (in the sense of Definition \ref{def:action_Cat_B}), whose morphisms $\un{\varphi}\colon(\un{X},\un{\omega})\rightarrow(\un{Z},\un{\vartheta})$ are the morphisms $\un{\varphi}\colon\un{X}\rightarrow\un{Z}$ in $\mathbf{Cat}_B$ such that for all $y\in\un{Y}(a,b)$ the square
\begin{equation*}
\xymatrix{
{\un{X}(a,a)} \ar[d]_-{\varphi} \ar[r]^-{\omega(y)} &{\un{X}(b,b)} \ar[d]^-{\varphi} \\
{\un{Z}(a,a)} \ar[r]_-{\vartheta(y)} &{\un{Z}(b,b)}
}
\end{equation*}
commutes, consider the functors
\begin{equation}
\label{eqn:K}
\mathcal{K}\colon SPt_{\un{Y}}(\mathbf{Cat}_B)\longrightarrow\mathrm{Act}_{\un{Y}}, \ (\xymatrix{{\underline{X}} \ar@<.5ex>[r]^-{\underline{f}} &{\underline{Y}} \ar@<.5ex>[l]^-{\underline{s}}})\mapsto (K(\un{f}),\un{\omega}),
\end{equation}
where $\omega(y\colon a\rightarrow b)\colon f^{-1}(1_a)\rightarrow f^{-1}(1_b),$ $k\mapsto {^yk}$ as in \eqref{eqn:functor_sch_point}, and
\begin{equation*}
\mathcal{P}\colon\mathrm{Act}_{\un{Y}}\longrightarrow SPt_{\un{Y}}(\mathbf{Cat}_B), \ (\un{X},\un{\omega})\mapsto(\xymatrix{{\underline{X}\rtimes_{\un{\omega}}\un{Y}} \ar@<.5ex>[r]^-{\underline{\pi}} &{\underline{Y}} \ar@<.5ex>[l]^-{\underline{\sigma}}}).
\end{equation*}
It is proven in \cite{a1} that the two are linked by an adjunction $\mathcal{K}\dashv\mathcal{P},$ which is an equivalence if and only if $|B| \leq 1.$ The functor $\mathcal{K},$ which is fully faithful, realizes $SPt_{\un{Y}}(\mathbf{Cat}_B)$ as the coreflective subcategory $(\mathrm{Act}_{\un{Y}})_{td}\subseteq\mathrm{Act}_{\un{Y}}$ of all $(\un{X},\un{\omega})$ with $\un{X}$ totally disconnected. This explains the relation between Schreier points and actions in $\mathbf{Cat}_B,$ for the general set $B.$
\end{remark}

We conclude this section by proving the following:
\begin{proposition}
Suppose that for a regular epimorphism $(\underline{g},\underline{h})$ in $Pt(\mathbf{Cat}_B)$ both its domain $(\underline{f},\underline{s})$ and the point $(\underline{\widehat{f}},\underline{\widehat{s}})$ induced between the kernel pairs, as in the diagram
\begin{equation}
\begin{aligned}
\label{eqn:sch-2-regular}
\xymatrix{
{\mathrm{Eq}(\underline{g})} \ar@<.9ex>[r]^-{\underline{g}_0} \ar@<-.9ex>[r]_-{\underline{g}_1} \ar@<-.5ex>[d]_-{\underline{\widehat{f}}}  &{\underline{X}} \ar[l] \ar@{->>}[r]^-{\underline{g}} \ar@<-.5ex>[d]_-{\underline{f}} &{\underline{X}^\prime} \ar@<-.5ex>[d]_-{{\underline{f}}^\prime} \\
{\mathrm{Eq}(\underline{h})} \ar@<.9ex>[r]^-{\underline{h}_0} \ar@<-.9ex>[r]_-{\underline{h}_1} \ar@<-.5ex>[u]_-{\underline{\widehat{s}}}  &{\underline{Y}} \ar[l] \ar@{->>}[r]_{\underline{h}} \ar@<-.5ex>[u]_-{\underline{s}} &{\underline{Y}^\prime,} \ar@<-.5ex>[u]_-{{\underline{s}}^\prime}
}
\end{aligned}
\end{equation}
are Schreier points. Then the codomain $(\underline{f}^\prime,\underline{s}^\prime)$ is also a Schreier point.
\end{proposition}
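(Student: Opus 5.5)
The plan is to verify directly the characterization of Schreier points in Proposition \ref{prop:char_schreier_points} for the codomain $(\underline{f}^\prime,\underline{s}^\prime)$. That is, for every $x^\prime\colon a\rightarrow b$ in $X^\prime$ we show there is a unique $k^\prime\in f^{\prime-1}(1_b)$ with $x^\prime=k^\prime\cdot s^\prime f^\prime(x^\prime)$. Two facts will be used throughout. First, regular epimorphisms in $Pt(\mathbf{Cat}_B)$ are level-wise regular epimorphisms in $\mathbf{Cat}_B$, and these are identity on objects and surjective on arrows; so $g\colon X\rightarrow X^\prime$ and $h\colon Y\rightarrow Y^\prime$ are surjective. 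Second, the morphism of points gives $f^\prime g=hf$ and $gs=s^\prime h$.

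\emph{Existence.} Given $x^\prime$, we choose $x\in X$ with $g(x)=x^\prime$ and use the Schreier property of $(\underline{f},\underline{s})$ to write $x=k\cdot sf(x)$ with $f(k)=1_b$. Applying $g$ gives
\[
x^\prime=g(k)\cdot gsf(x)=g(k)\cdot s^\prime hf(x)=g(k)\cdot s^\prime f^\prime(x^\prime),
\]
and $f^\prime(g(k))=hf(k)=1_b$. So $k^\prime=g(k)$ works.

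\emph{Uniqueness.} This is the real content, and it is where the kernel-pair point $(\underline{\widehat{f}},\underline{\widehat{s}})$ is needed. Suppose $k_1^\prime\cdot s^\prime f^\prime(x^\prime)=k_2^\prime\cdot s^\prime f^\prime(x^\prime)$ with $f^\prime(k_i^\prime)=1_b$.

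The first step is to lift each $k_i^\prime$ to some $k_i\in f^{-1}(1_b)$. An arbitrary lift $\tilde k_i$ only satisfies $hf(\tilde k_i)=1_b$. However, decomposing $\tilde k_i=c_i\cdot sf(\tilde k_i)$ with $f(c_i)=1_b$ yields
\[
g(\tilde k_i)=g(c_i)\cdot s^\prime hf(\tilde k_i)=g(c_i),
\]
so we may take $k_i=c_i$.

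Next, fix a lift $x$ of $x^\prime$ and set $y=sf(x)$. The arrows $u=k_1\cdot y$ and $v=k_2\cdot y$ satisfy $g(u)=g(v)$ by the hypothesis, since $gsf(x)=s^\prime f^\prime(x^\prime)$. Hence $(u,v)$ is an arrow of $\mathrm{Eq}(\underline{g})$. Moreover $\widehat{s}\widehat{f}(u,v)=(sf(u),sf(v))=(y,y)$.

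Now apply the existence half of the Schreier property of $(\underline{\widehat{f}},\underline{\widehat{s}})$. It gives an arrow $(c,d)\in\mathrm{Eq}(g)$ with $f(c)=f(d)=1_b$ and $(u,v)=(c,d)\cdot(y,y)$. Then $k_1\cdot sf(u)=u=c\cdot sf(u)$, and the uniqueness part of the Schreier property of $(\underline{f},\underline{s})$ forces $c=k_1$; similarly $d=k_2$. Therefore $(k_1,k_2)\in\mathrm{Eq}(g)$, that is, $k_1^\prime=g(k_1)=g(k_2)=k_2^\prime$.

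The main obstacle is this uniqueness step. The first point to get right is the preliminary normalization of the lifts to $f^{-1}(1_b)$. The second is recognizing that one should apply the Schreier decomposition in $\mathrm{Eq}(\underline{g})$ to the pair $(k_1\cdot y,k_2\cdot y)$ rather than to $(k_1,k_2)$ itself, which is not yet known to lie in $\mathrm{Eq}(g)$.
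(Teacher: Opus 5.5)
Your proposal is correct and follows essentially the same argument as the paper. Your normalization of the lifts matches the paper's passage from $u$ to $z\in f^{-1}(1_b)$, and your pair $(k_1\cdot sf(x),\,k_2\cdot sf(x))\in\mathrm{Eq}(g)$ is the paper's $(x,\,z\cdot sf(x))$, decomposed via the Schreier point $(\underline{\widehat{f}},\underline{\widehat{s}})$ and then compared using uniqueness for $(\underline{f},\underline{s})$; your version is merely stated symmetrically for two candidates rather than one candidate against the constructed $g(k)$.
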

\begin{proof}
Fix a map $x^\prime\colon a\rightarrow b$ in $X^\prime.$ Since $g$ is surjective and $(\underline{f},\underline{s})$ is a Schreier point, we have $x^\prime=g(x)$ for some $x\colon a\rightarrow b$ in $X$ which factorizes as $x=k\cdot sf(x)$ for a unique $k\in f^{-1}(1_b).$ Then $x^\prime=g(x)=g(k)\cdot gsf(x)=g(k)\cdot s^\prime f^\prime(x^\prime)$ and $f^\prime g(k)=hf(k)=h(1_b)=1_b,$ so that $g(k)\in (f^\prime)^{-1}(1_b).$

To prove that $g(k)$ is unique, suppose that $x^\prime=u^\prime\cdot s^\prime f^\prime(x^\prime)$ for some other $u^\prime\in(f^\prime)^{-1}(1_b),$ with $u^\prime=g(u).$ Again, there is a unique $z\in f^{-1}(1_b)$ such that $u=z\cdot sf(u),$ and for this $z$ we have $u^\prime=g(u)=g(z)\cdot gsf(u)=g(z)\cdot s^\prime f^\prime(u^\prime)=g(z),$ so that $g\big(z\cdot sf(x)\big)=g(z)\cdot gsf(x)=u^\prime\cdot s^\prime f^\prime(x^\prime)=x^\prime=g(x).$ This means that $\big(x,z\cdot sf(x)\big)\in\mathrm{Eq}(g),$ and since $(\underline{\widehat{f}},\underline{\widehat{s}})$ is a Schreier point we must have $\big(x,z\cdot sf(x)\big)=(v,w)\cdot\big(sf(x),sf(x)\big)=\big(v\cdot sf(x),w\cdot sf(x)\big)$ for two unique elements $v,w\in f^{-1}(1_b)$ satisfying also $g(v)=g(w).$ But now uniqueness in $f^{-1}(1_b)$ entails that $v=k$ and $w=z,$ so that $u^\prime=g(z)=g(k).$
\end{proof}
(Observe that the surjectivity of $h$ plays no role in the proof, but it is nonetheless forced by the equality $f^\prime g=hf$ and the surjectivity of $f^\prime$ and $g.$)

We already remarked that the fibres $\mathbf{Cat}_B$ are Barr-exact, so that by Proposition \ref{prop:special_exact} we have:
\begin{corollary}
\label{cor:S-special-exact}
For every set $B$ and every $\underline{Y}\in\mathbf{Cat}_B,$ the category $Sl(\mathbf{Cat}_B/\underline{Y})$ of $S$-special morphisms with codomain $\underline{Y}$ is Barr-exact.
\end{corollary}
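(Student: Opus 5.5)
The plan is to apply Proposition~\ref{prop:special_exact} to $\mathcal{C}=\mathbf{Cat}_B$ with $S$ the class of Schreier points, so we must verify its three hypotheses.

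First, $\mathbf{Cat}_B$ is $S$-Mal'tsev. The theorem quoted from \cite{mal'tsev-reflection} says that $\mathbf{Cat}_B$ is $S$-protomodular with respect to Schreier points. By the proposition recalled from \cite{mal'tsev-reflection}, Theorem 4.1, every $S$-protomodular category is $S$-Mal'tsev. This includes pullback stability of $S$ and point-congruity, which are part of the definition.

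Second, $S$ satisfies condition $(\mathscr{P})$, i.e.\ it is 2-regular. This is exactly the proposition proved immediately before the corollary: for a regular epimorphism $(\underline{g},\underline{h})$ in $Pt(\mathbf{Cat}_B)$, if the domain $(\underline{f},\underline{s})$ and the induced point between the kernel pairs are Schreier, then the codomain is Schreier. One check is needed here. That proof uses that regular epimorphisms in $Pt(\mathbf{Cat}_B)$ are level-wise regular epimorphisms in $\mathbf{Cat}_B$. It also uses that regular epimorphisms in $\mathbf{Cat}_B$ are surjective on arrows, so that $g$ is surjective; I would confirm this from the standard description of coequalizers in the exact fibration $\mathbf{Cat}_B$ \cite{tour}.

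Third, $\mathbf{Cat}_B$ is Barr-exact, as recalled above via the exact fibration $()_0$ \cite{tour}.

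With all three hypotheses in place, Proposition~\ref{prop:special_exact} yields that $Sl(\mathbf{Cat}_B/\underline{Y})$ is Barr-exact for every $\underline{Y}\in\mathbf{Cat}_B$. The only potentially delicate step is the identification of regular epimorphisms in $\mathbf{Cat}_B$ as those surjective on arrows. If needed, I would argue it directly: a regular epimorphism is the coequalizer of its kernel pair, and by exactness the image factorization is computed on arrow sets.
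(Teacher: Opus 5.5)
Your proposal is correct and matches the paper's argument: the corollary follows by feeding into Proposition~\ref{prop:special_exact} three facts, namely that $\mathbf{Cat}_B$ is $S$-protomodular (hence $S$-Mal'tsev), that Schreier points satisfy condition $(\mathscr{P})$ by the proposition just before the corollary, and that $\mathbf{Cat}_B$ is Barr-exact. One small correction to your fallback argument: regular epimorphisms in $\mathbf{Cat}_B$ are surjective on arrows not because of exactness as such, but because $\mathbf{Cat}_B$ is a many-sorted variety with sorts the hom-sets $\underline{X}(a,b)$, and in such a variety the regular epimorphisms are exactly the sortwise surjective homomorphisms.
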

\section{The second cohomology group of a small category} \label{section second cohomology CatB}
Recall (from \cite{cocat}, p.~183) that a $\underline{Y}$\emph{-module}, for a small category $\underline{Y}$ over a set $B,$ is simply a functor $A\colon\underline{Y}\rightarrow\mathbf{Ab}.$

In this section, we shall prove that the class of extensions of a $\underline{Y}$-module $A$ by $\underline{Y},$ in the sense of \cite{cocat, hoff}, can be recovered as the fibre $d_{\un{Y}}^{-1}(A)$ of the direction functor of the category $Sl(\mathbf{Cat}_B/\underline{Y})$ of $S$-special morphisms in $\mathbf{Cat}_B$ with codomain $\underline{Y}$ ($S$ being the class of Schreier points).

The resulting abelian group structure on the connected components of $d_{\un{Y}}^{-1}(A)$ (see Corollary \ref{cor:connected_components_group}) coincides with the one given in \cite{golasinski} by the Baer sum in $\mathrm{Opext}^1(\underline{Y},A),$ thus allowing for a conceptual description of the second cohomology group $H^2(\underline{Y},A)$ in the sense of \cite{cocat, golasinski, hoff}.

\subsection{The direction functor for $S$-special morphisms in $\mathbf{Cat}_B$}
We know from Proposition \ref{prop:S-special-Maltsev} and Corollary \ref{cor:S-special-exact} that $Sl(\mathbf{Cat}_B/\underline{Y})$ is a Barr-exact and Mal'tsev category, so that every object $\underline{f}\in Sl(\mathbf{Cat}_B/\underline{Y})$ admits at most one internal Mal'tsev operation (which is necessarily autonomous). The direction functor of $\mathcal{C}=Sl(\mathbf{Cat}_B/\underline{Y})$ is then a functor
\[
d=d_{\un{Y}}\colon\mathrm{Mal}(\mathcal{C}_g)=\mathrm{AMal}(\mathcal{C}_g)\longrightarrow \mathbf{Ab}(\mathcal{C}),
\]
where, as in Section \ref{sec:direction}, $\mathcal{C}_g\subseteq\mathcal{C}$ denotes the full subcategory of objects with global support. The internal abelian groups in $\mathcal{C},$ appearing as the codomain of $d,$ are themselves simply the pointed objects in $\mathrm{Mal}(\mathcal{C}).$ Thus, our first step will be to describe the internal Mal'tsev algebras in $Sl(\mathbf{Cat}_B/\underline{Y}).$
\begin{lemma}
\label{lemma:q_comp}
Consider a commutative triangle
\begin{equation*}
\xymatrixcolsep{1.5pc}
\xymatrix{
{\un{X}} \ar[rr]^-{\un{g}} \ar[rd]_-{\un{f}}  &\ &{\un{Z}} \ar[ld]^-{\un{h}} \\
&{\un{Y}} &\
}
\end{equation*}
in $\mathbf{Cat}_B,$ where $\un{f}$ and $\un{h}$ are $S$-special, with Schreier retractions $q_{\un{f}}$ and $q_{\un{h}}$ respectively. Then, for all $(x,y)\in\mathrm{Eq}(f),$ we have
\[
g\big(q_{\un{f}}(x,y)\big)=q_{\un{h}}\big(g(x),g(y)\big).
\]
\end{lemma}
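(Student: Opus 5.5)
The plan is to use the uniqueness part of the definition of the Schreier retraction $q_{\un{h}}$. Recall that, since $\un{h}$ is $S$-special, for every pair $(x',y')\in\mathrm{Eq}(h)$ with $x',y'\colon a\rightarrow b$ there is a \emph{unique} $k'\in h^{-1}(1_b)$ with $k'\cdot x'=y'$, and this $k'$ is by definition $q_{\un{h}}(x',y')$. So it suffices to show that $g\big(q_{\un{f}}(x,y)\big)$ is such an element for the pair $\big(g(x),g(y)\big)$.

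The steps, in order, are as follows.

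First, we check that $\big(g(x),g(y)\big)\in\mathrm{Eq}(h)$. This holds because $hg=f$ and $f(x)=f(y)$. Moreover, $g$ is the identity on objects, so $g(x),g(y)\colon a\rightarrow b$ have the same domain and codomain as $x,y$.

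Second, put $k=q_{\un{f}}(x,y)\in f^{-1}(1_b)$, so that $k\cdot x=y$. Then $h\big(g(k)\big)=f(k)=1_b$, hence $g(k)\in h^{-1}(1_b)$.

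Third, since $\un{g}$ is a functor, $g(k)\cdot g(x)=g(k\cdot x)=g(y)$.

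Finally, the uniqueness in the definition of $q_{\un{h}}$ (which rests on $\un{h}$ being $S$-special, via the Schreier point $(\un{h}_0,\un{\Delta})$ on $\mathrm{Eq}(\un{h})$) gives $g(k)=q_{\un{h}}\big(g(x),g(y)\big)$.

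There is no real obstacle here. The only point needing care is the justification of uniqueness. If one wants to derive it from the Schreier point structure rather than cite it, one decomposes $(g(x),g(y))=(1_b,k')\cdot(g(x),g(x))$ in $\mathrm{Eq}(\un{h})$. Proposition~\ref{prop:char_schreier_points} applied to the Schreier point $\mathrm{Eq}(\un{h})\rightleftarrows\un{Z}$ (which lies in $S$ because $\un{h}$ is $S$-special) says that any element of $\mathrm{Eq}(h)$ decomposes uniquely as an element of the kernel of $h_0$ over $1_b$, i.e.\ a pair $(1_b,k')$, composed with its image under $\Delta h_0$. Hence $k'$ is unique. Note also that the hypothesis that $\un{f}$ is $S$-special is used only to guarantee that $q_{\un{f}}(x,y)$ exists.
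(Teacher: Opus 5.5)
Your proposal is correct and follows essentially the same route as the paper: you check that $g\big(q_{\un{f}}(x,y)\big)$ lies in $h^{-1}(1_b)$ and satisfies $g\big(q_{\un{f}}(x,y)\big)\cdot g(x)=g(y)$, and then conclude by the uniqueness that defines $q_{\un{h}}$. Your extra justification of that uniqueness, via the Schreier point $(\un{h}_0,\un{\Delta})$ and Proposition~\ref{prop:char_schreier_points}, is a valid elaboration of what the paper takes as the definition.
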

\begin{proof}
Let $x,y\colon a\rightarrow b$ be such that $(x,y)\in\mathrm{Eq}(f).$ Then $h\Big(g\big(q_{\un{f}}(x,y)\big)\Big)=f\big(q_{\un{f}}(x,y)\big)=1_b$ and $g\big(q_{\un{f}}(x,y)\big)\cdot g(x)=g\big(q_{\un{f}}(x,y)\cdot x\big)=g(y),$ which are the defining properties of $q_{\un{h}}\big(g(x),g(y)\big).$
\end{proof}
\begin{proposition}
\label{prop:char_mal'tsev_objects_slice}
An $S$-special morphism $\un{f}=(f,1_B)\colon\un{X}\rightarrow\un{Y}$ in $\mathbf{Cat}_B$ admits an internal Mal'tsev operation in $Sl(\mathbf{Cat}_B/\underline{Y})$ if and only if, for all $b\in B,$ the group $f^{-1}(1_b)$ is abelian.
\end{proposition}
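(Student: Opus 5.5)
We prove the two implications separately. In both, the product of $\un{f}$ with itself three times in $Sl(\mathbf{Cat}_B/\un{Y})$ is the pullback $\un{X}\times_{\un{Y}}\un{X}\times_{\un{Y}}\un{X}$. Its arrows are triples $(x,y,z)$ of parallel arrows $a\rightarrow b$ with $f(x)=f(y)=f(z)$.

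For the ``only if'' part, suppose $p\colon \un{X}\times_{\un{Y}}\un{X}\times_{\un{Y}}\un{X}\rightarrow\un{X}$ is an internal Mal'tsev operation over $\un{Y}$. It is a functor fixing objects and commuting with $\un{f}$. We restrict $p$ to the triples $(k,1_b,k')$ with $k,k'\in f^{-1}(1_b)$. These lie in the fibre over $1_b$, and the fibre of the product over $1_b$ is the product group $f^{-1}(1_b)^3$. So $p$ restricts to a group homomorphism $f^{-1}(1_b)^3\rightarrow f^{-1}(1_b)$ satisfying the Mal'tsev identities. This is the classical Eckmann--Hilton-type argument. Functoriality gives
\[
p(k,1_b,k')=p(k,1_b,1_b)\cdot p(1_b,1_b,k')=k\cdot k',
\]
and also
\[
p(k,1_b,k')=p(1_b,1_b,k')\cdot p(k,1_b,1_b)=k'\cdot k.
\]
Hence $f^{-1}(1_b)$ is abelian. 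Alternatively, we can invoke the fact that in a Mal'tsev category the operation is autonomous and restrict to the fibres.

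For the ``if'' part, we define $p$ using the Schreier retraction $q=q_{\un{f}}$:
\[
p(x,y,z)=q(y,x)\cdot z,
\]
so that $p(x,y,z)$ is the unique arrow $a\rightarrow b$ over $f(z)$ obtained by translating $z$ by the difference of $x$ and $y$. The Mal'tsev identities follow directly from Proposition~\ref{prop:prop_retraction}. We have $q(x,x)=1_b$, giving $p(x,x,z)=z$, and $q(y,x)\cdot y=x$, giving $p(x,y,y)=x$. The arrow $p(x,y,z)$ lies over $f(z)$, so $p$ commutes with the maps to $\un{Y}$. Preservation of identities is immediate from $q(1_b,1_b)=1_b$.

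The main obstacle is functoriality. For composable triples $(x,y,z)$ and $(x',y',z')$ we must show
\[
q(y'y,x'x)\cdot z'z=q(y',x')\cdot z'\cdot q(y,x)\cdot z.
\]
We plan to move $q(y,x)$ across $z'$ using the action ${}^{z'}k=q(z',z'k)$, which satisfies $z'\cdot k={}^{z'}k\cdot z'$. The identity then reduces to
\[
q(y'y,x'x)=q(y',x')\cdot{}^{z'}q(y,x).
\]
By Proposition~\ref{prop:prop_retraction}(3), the left-hand side is $q(y',x')\cdot{}^{y'}q(y,x)$. So we must show that ${}^{y'}k={}^{z'}k$ whenever $f(y')=f(z')$, that is, that the action \eqref{eqn:functor_s-special} factors through $\un{f}$.

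This is exactly where commutativity enters. Write $z'=m\cdot y'$ with $m\in f^{-1}(1_c)$. Then
\[
z'k=m\cdot y'k=m\cdot{}^{y'}k\cdot y',
\]
while also $z'k={}^{z'}k\cdot z'={}^{z'}k\cdot m\cdot y'$. Cancelling $y'$ is justified by the uniqueness in the decomposition $q$. This gives ${}^{z'}k=m\,{}^{y'}k\,m^{-1}$, which equals ${}^{y'}k$ because $f^{-1}(1_c)$ is abelian.

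Finally, $p$ is a morphism in $Sl(\mathbf{Cat}_B/\un{Y})$ automatically: its codomain $\un{f}$ is $S$-special, and the slice is full. This completes the construction. Uniqueness of $p$ is not needed here, since it follows from the Mal'tsev property.
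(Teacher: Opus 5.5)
Your proposal is correct and follows essentially the same route as the paper: the same Eckmann--Hilton argument for the ``only if'' part, the same operation $p(x,y,z)=q_{\un{f}}(y,x)\cdot z$, and the same key identity $q_{\un{f}}(u,u\cdot k)=q_{\un{f}}(v,v\cdot k)$ for $(u,v)\in\mathrm{Eq}(f)$. As in the paper, you obtain that identity from $m\cdot{}^{u}k={}^{v}k\cdot m$ with $m=q_{\un{f}}(u,v)$ and combine it with Proposition~\ref{prop:prop_retraction}(3). The only difference is that the paper first uses Lemma~\ref{lemma:q_comp} to show that any Mal'tsev operation on $\un{f}$ must have this form, which, as you note, is not needed for the existence claim.
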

\begin{proof}
To define a Mal'tsev operation on $\un{f}$ in $Sl(\mathbf{Cat}_B/\underline{Y}),$ one needs first a morphism $p_{\un{f}}=(p_f,1_B)\colon\un{f}\times_Y\un{f}\times_Y\un{f}\rightarrow\un{f},$ where $\un{f}\times_Y\un{f}\times_Y\un{f}$ is an internal functor
\begin{equation}
\begin{aligned}
\label{eqn:fxfxf}
\xymatrix{
{X \times_YX\times_Y X} \ar[d]_-{f\times_Yf\times_Yf} \ar@<.9ex>[r]^-{v_0} \ar@<-.9ex>[r]_-{v_1} &B \ar@{=}[d] \ar[l]|(.28){w_0} \\
Y \ar@<.9ex>[r]^-{y_0} \ar@<-.9ex>[r]_-{y_1} &{B,} \ar[l]|{t_0}
}
\end{aligned}
\end{equation}
and the following diagram commutes:
\begin{equation}
\begin{aligned}
\label{eqn:p_f}
\xymatrix{
{X \times_YX\times_Y X} \ar[rr]^-{p_f} \ar[rd]_-{f\times_Yf\times_Yf \ \ } &\ &X \ar[ld]^-f \\
&{Y.} &\
}
\end{aligned}
\end{equation}
The triple product $f\times_Yf\times_Yf$ can be realized by $X \times_YX\times_Y X=\{(x,y,z)\in X^3:f(x)=f(y)=f(z)\}\rightarrow Y,$ $(x,y,z)\mapsto f(x),$ and $p_f$ is required to satisfy the Mal'tsev equations $p_f(x,y,y)=x$ and $p_f(x,x,y)=y.$ In this setting, we have $v_0\colon(x,y,z)\mapsto x_0(x)=x_0(y)=x_0(z),$ $v_1\colon(x,y,z)\mapsto x_1(x)=x_1(y)=x_1(z),$ and $w_0\colon b\mapsto(1_b,1_b,1_b).$

Suppose that such a Mal'tsev operation $p_{\un{f}}$ exists, and consider arrows $x,y\colon b\rightarrow b$ in $X$ such that $f(x)=f(y)=1_b.$ Then
\begin{equation*}
\begin{split}
x\cdot y&=p_f(x,1_b,1_b)\cdot p_f(1_b,1_b,y)\\
&=p_f\big((x,1_b,1_b)\cdot(1_b,1_b,y)\big)\\
&=p_f(x,1_b,y)\\
&=p_f\big((1_b,1_b,y)\cdot(x,1_b,1_b)\big)\\
&=p_f(1_b,1_b,y)\cdot p_f(x,1_b,1_b)\\
&=y\cdot x,
\end{split}
\end{equation*}
so that $f^{-1}(1_b)$ is abelian.

Conversely, observe that if a morphism $p_{\un{f}}=(p_f,1_B)$ is given, as in the diagram \eqref{eqn:p_f} above, by Lemma \ref{lemma:q_comp} we must have
\begin{equation}
\label{eqn:quasi_q}
p_f\Big(q_{\un{f}\times_Y\un{f}\times_Y\un{f}}\big((x^\prime,y^\prime,z^\prime),(x,y,z)\big)\Big)=q_{\un{f}}\big(p_f(x^\prime,y^\prime,z^\prime),p_f(x,y,z)\big)
\end{equation}
(because $\un{f},$ and thus the product $\un{f}\times_Y\un{f}\times_Y\un{f}$ in $Sl(\mathbf{Cat}_B/\un{Y}),$ are $S$-special).

Let us compute the Schreier retraction $q_{\un{f}\times_Y\un{f}\times_Y\un{f}}.$

For all $\big((x^\prime,y^\prime,z^\prime),(x,y,z)\big)\in\mathrm{Eq}(f\times_Yf\times_Yf),$ the arrow $q_{\un{f}\times_Y\un{f}\times_Y\un{f}}\big((x^\prime,y^\prime,z^\prime),(x,y,z)\big)$ is defined by the property
\[
q_{\un{f}\times_Y\un{f}\times_Y\un{f}}\big((x^\prime,y^\prime,z^\prime),(x,y,z)\big)\cdot (x^\prime,y^\prime,z^\prime)=(x,y,z):
\]
but then, as
\[
(x,y,z)=\big(q_{\un{f}}(x^\prime,x)\cdot x^\prime,q_{\un{f}}(y^\prime,y)\cdot y^\prime,q_{\un{f}}(z^\prime,z)\cdot z^\prime\big),
\]
it follows that
\[
q_{\un{f}\times_Y\un{f}\times_Y\un{f}}\big((x^\prime,y^\prime,z^\prime),(x,y,z)\big)=\big(q_{\un{f}}(x^\prime,x),q_{\un{f}}(y^\prime,y),q_{\un{f}}(z^\prime,z)\big),
\]
and the equation \eqref{eqn:quasi_q} becomes
\begin{equation}
\label{eqn:real_q}
p_f\big(q_{\un{f}}(x^\prime,x),q_{\un{f}}(y^\prime,y),q_{\un{f}}(z^\prime,z)\big)=q_{\un{f}}\big(p_f(x^\prime,y^\prime,z^\prime),p_f(x,y,z)\big).
\end{equation}
Thus, we can recover $p_f(x,y,z)$ as
\begin{equation*}
\begin{split}
p_f(x,y,z)&=q_{\un{f}}\big(p_f(x^\prime,y^\prime,z^\prime),p_f(x,y,z)\big)\cdot p_f(x^\prime,y^\prime,z^\prime)\\
&=p_f\big(q_{\un{f}}(x^\prime,x),q_{\un{f}}(y^\prime,y),q_{\un{f}}(z^\prime,z)\big)\cdot p_f(x^\prime,y^\prime,z^\prime),
\end{split}
\end{equation*}
and setting $z^\prime=z$ and $x^\prime=y=y^\prime$ we get
\begin{equation*}
\begin{split}
p_f(x,y,z)&=p_f\big(q_{\un{f}}(y,x),q_{\un{f}}(y,y),q_{\un{f}}(z,z)\big)\cdot p_f(y,y,z)\\
&=p_f\big(q_{\un{f}}(y,x),1_b,1_b\big)\cdot z\\
&=q_{\un{f}}(y,x)\cdot z
\end{split}
\end{equation*}
(using Proposition \ref{prop:prop_retraction}(2)).

This means that if a Mal'tsev operation $p_{\un{f}}=(p_f,1_B)$ on $\un{f}$ exists, $p_f$ must be defined by
\begin{equation}
\label{eqn:eq_p}
p_f(x,y,z)=q_{\un{f}}(y,x)\cdot z.
\end{equation}
This map surely makes the diagrams \eqref{eqn:fxfxf} and \eqref{eqn:p_f} commute and satisfies the Mal'tsev equations $p_f(x,y,y)=x$ and $p_f(x,x,y)=y$ (using again Proposition \ref{prop:prop_retraction}(2) and the definition of $q_{\un{f}}$), but in order for it to give an internal functor $p_{\un{f}}=(p_f,1_B)$ it must also preserve the compositions. We maintain that this is true precisely when all the groups $f^{-1}(1_b)$ are abelian.

Indeed, in the latter case, we claim that for all $(u,v)\in\mathrm{Eq}(f),$ $u,v\colon a\rightarrow b,$ and all $k\in f^{-1}(1_a),$ the equality
\begin{equation}
\label{eqn:q_if_ker_ab}
q_{\un{f}}(u,u\cdot k)=q_{\un{f}}(v,v\cdot k)
\end{equation}
holds.

Assuming this, it is then easy to prove that for all $x,y,z\colon a\rightarrow b$ and $x^\prime,y^\prime,z^\prime\colon b\rightarrow c$ in $X$ such that $f(x)=f(y)=f(z)$ and $f(x^\prime)=f(y^\prime)=f(z^\prime)$ one has indeed:
\begin{equation*}
\begin{split}
p_f(x^\prime,y^\prime,z^\prime)\cdot p_f(x,y,z)&=q_{\un{f}}(y^\prime,x^\prime)\cdot z^\prime\cdot q_{\un{f}}(y,x)\cdot z \\
&=q_{\un{f}}(y^\prime,x^\prime)\cdot\Big(q_{\un{f}}\big(z^\prime,z^\prime\cdot q_{\un{f}}(y,x)\big)\cdot z^\prime\Big)\cdot z\\
&=q_{\un{f}}(y^\prime,x^\prime)\cdot q_{\un{f}}\big(y^\prime,y^\prime\cdot q_{\un{f}}(y,x)\big)\cdot z^\prime\cdot z \ \ \ \ \txt{(by \eqref{eqn:q_if_ker_ab})}\\
&=q_{\un{f}}\big((y^\prime,x^\prime)\cdot (y,x)\big)\cdot z^\prime\cdot z \ \ \ \ \txt{(by Proposition \ref{prop:prop_retraction}(3))}\\
&=q_{\un{f}}(y^\prime\cdot y,x^\prime\cdot x)\cdot z^\prime\cdot z\\
&=p_f(x^\prime\cdot x,y^\prime\cdot y,z^\prime\cdot z)\\
&=p_f\big((x^\prime,y^\prime,z^\prime)\cdot(x,y,z)\big).
\end{split}
\end{equation*}
To prove \eqref{eqn:q_if_ker_ab}, observe first that
\begin{equation*}
q_{\un{f}}(u,v)\cdot q_{\un{f}}(u,u\cdot k)\cdot u=q_{\un{f}}(u,v)\cdot u\cdot k=v\cdot k
\end{equation*}
and similarly
\begin{equation*}
q_{\un{f}}(v,v\cdot k)\cdot q_{\un{f}}(u,v)\cdot u=q_{\un{f}}(v,v\cdot k)\cdot v=v\cdot k,
\end{equation*}
so that
\[
q_{\un{f}}(u,v)\cdot q_{\un{f}}(u,u\cdot k)=q_{\un{f}}(u,v\cdot k)=q_{\un{f}}(v,v\cdot k)\cdot q_{\un{f}}(u,v).
\]
Being the group $f^{-1}(1_b)$ abelian by assumption, we can cancel $q_{\un{f}}(u,v)$ from the last equation, obtaining \eqref{eqn:q_if_ker_ab}.

\end{proof}
\begin{remark}
It follows that the groupoid $K(\un{f})$ given by the pullback \eqref{eqn:K_f}, where $\un{f}$ is an internal Mal'tsev algebra in $Sl(\mathbf{Cat}_B/\un{Y}),$ is an abelian groupoid in the sense of \cite{aspherical}.
\end{remark}
Since a morphism $\un{f}=(f,1_B)\colon\un{X}\rightarrow\un{Y}$ in $\mathbf{Cat}_B$ is a regular epimorphism if and only if the function $f$ is surjective, and since the identity functor $1_{\un{Y}}$ is a terminal object in $Sl(\mathbf{Cat}_B/\un{Y}),$ the object $\un{f}\in Sl(\mathbf{Cat}_B/\un{Y})$ has global support if and only if the map $f$ is surjective.

Thus, the domain of the direction functor $d$ of $Sl(\mathbf{Cat}_B/\un{Y})$ is given by all $S$-special morphisms $\un{f}\colon\un{X}\rightarrow\un{Y}$ with $f$ surjective, and such that all the fibres $f^{-1}(1_b)$ are abelian groups.

As for the codomain of $d,$ we know that the internal abelian groups in $Sl(\mathbf{Cat}_B/\un{Y})$ are precisely those internal Mal'tsev algebras $\xymatrixcolsep{1pc}\xymatrix{{{\un{g}}\colon\un{Z}} \ar[r] &{\un{Y}}}$ admitting a morphism $\xymatrixcolsep{1pc}\xymatrix{{\un{t}\colon1_{\un{Y}}} \ar[r] &{\un{g}}}$ in  $Sl(\mathbf{Cat}_B/\un{Y}),$ which implies that $\xymatrix{{\un{Z}} \ar@<.5ex>[r]^-{\un{g}} &{\un{Y}} \ar@<.5ex>[l]^-{\un{t}}}$ is a point in $\mathbf{Cat}_B$: since $\un{g}$ is an $S$-special morphism, $(\un{g},\un{t})$ is necessarily a Schreier point (cf. Remark \ref{rmks:s-special}(4)), which by Proposition \ref{prop:schreier_functors} corresponds to a functor $\un{Y}\rightarrow \mathbf{Ab}$ (as in \eqref{eqn:functor_sch_point}). With the terminology of \cite{cocat}, we conclude that the codomain of $d$ is the category $\mathrm{Mod}_{\un{Y}}$ of $\un{Y}$-modules (whose morphisms are the natural transformations).

Now, observe that if $\un{f}\colon\un{X}\rightarrow\un{Y}$ is an object in $\mathrm{Mal}\big(Sl(\mathbf{Cat}_B/\un{Y})\big)_g,$ the equality \eqref{eqn:q_if_ker_ab} entails that for all $y\colon a\rightarrow b$ in $Y$ and all $k\in f^{-1}(1_a)$ the arrow ${^yk}=q_{\un{f}}(x,x\cdot k)\in f^{-1}(1_b),$ where $x\in X$ is any element such that $f(x)=y,$ does not depend on the choice of $x$: thus the functor \eqref{eqn:functor_s-special} associated with the $S$-special morphism $\un{f}$ induces a $\un{Y}$-module
\begin{equation}
\label{eqn:d(f)}
D_{\un{f}}\colon\underline{Y} \rightarrow \mathbf{Ab}, \ \  (y\colon a\rightarrow b) \mapsto \big({^y()}\colon f^{-1}(1_a)\rightarrow f^{-1}(1_b), \ k\mapsto {^yk}\big).
\end{equation}
We claim that $D_{\un{f}}$ is precisely the direction of the internal Mal'tsev algebra $\un{f},$ i.e. that one has $d(\un{f})=D_{\un{f}}.$

Recall from Section \ref{sec:direction} that if $\mathcal{C}$ is a Barr-exact and Mal'tsev category, and $X\in\mathrm{Mal}(\mathcal{C}_g),$ then the direction $d(X)\in\mathbf{Ab}(\mathcal{C})$ is determined as the only internal (necessarily abelian) group in $\mathcal{C}$ which admits a simply transitive group action on $X$: thus, to prove that $d(\un{f})=D_{\un{f}}$ when $\mathcal{C}=Sl(\mathbf{Cat}_B/\un{Y}),$ it suffices to produce a simply transitive action of the group $D_{\un{f}}$ on $\un{f}.$

Under the equivalence between Schreier points in $\mathbf{Cat}_B$ and $\mathbf{Mon}$-valued functors (Proposition \ref{prop:schreier_functors}), the $\un{Y}$-module $D_{\un{f}}$ corresponds to the point
\begin{equation*}
\begin{aligned}
{
\xymatrix{
{\underline{Z}\cong K(\un{f})\rtimes\un{Y}} \ar@<-.5ex>[d]_-{\underline{\delta}_{\un{f}}}\\
{\underline{Y}} \ar@<-.5ex>[u]_-{\underline{\upsilon}_{\un{f}}}
}
}
{\xymatrix{: \\ :}}
{
\xymatrix{
{Z} \ar@<-.5ex>[d]_-{\delta_{\un{f}}} \ar@<.9ex>[r]^-{z_0} \ar@<-.9ex>[r]_-{z_1} &{B} \ar@{=}[d] \ar[l]|{w_0} \\
{Y} \ar@<-.5ex>[u]_-{\upsilon_{\un{f}}} \ar@<.9ex>[r]^-{y_0} \ar@<-.9ex>[r]_-{y_1} &{B,} \ar[l]|{t_0}
}
}
\end{aligned}
\end{equation*}
where, explicitly, $Z=\{(k,y):y\in Y \ \text{and} \ k\in f^{-1}(1_{y_1(y)})\},$ $\delta_{\un{f}}(k,y)=y$ and $\upsilon_{\un{f}}(y)=(1_{y_1(y)},y)$ (plus, of course, $z_0(k,y)=y_0(y),$ $z_1(k,y)=y_1(y),$ and $w_0(b)=(1_b,1_b)$).

It is easy to verify that the morphism $\un{\delta}_{\un{f}}$ is $S$-special, and that it is indeed an internal group in $Sl({\mathbf{Cat}_B})/\un{Y}$ with group operation $(k,k^\prime,y)\mapsto (k^\prime\cdot k,y)$ (for all $y\colon a\rightarrow b$ in $Y$ and $k,k^\prime\in f^{-1}(1_b)$).

To obtain a simply transitive action of the group $D_{\un{f}}$ on $\un{f}$ in $Sl(\mathbf{Cat}_B/\un{Y}),$ we need first of all an isomorphism
\begin{equation}
\label{eqn:phi_f}
\xymatrix{ {\un{\phi}_{\un{f}}: \un{f}\times_{\un{Y}} \un{f} } \ar[r]^-{\sim} &{ \un{f}\times_{\un{Y}} \un{\delta}_{\un{f}} } }
\end{equation}
in $Sl(\mathbf{Cat}_B/\un{Y})$ such that the triangle
\begin{equation}
\begin{aligned}
\label{eqn:phi_f_slice}
\xymatrixcolsep{1.5pc}
\xymatrix{
{ \un{f}\times_{\un{Y}} \un{f} } \ar[rr]^-{\un{\phi}_{\un{f}}}_-{\sim} \ar[rd]_-{\un{\pi}_1} &\ &{ \un{f}\times_{\un{Y}} \un{\delta}_{\un{f}} } \ar[ld]^-{\un{\pi}_1} \\
&{\un{f}} &\
}
\end{aligned}
\end{equation}
commutes (where the $\un{\pi}_1$'s denote the first projections).

The product $\un{f}\times_{\un{Y}} \un{f}$ in $Sl(\mathbf{Cat}_B/\un{Y})$ is completely determined by the kernel pair $\mathrm{Eq}(f),$ and similarly the product $\un{f}\times_{\un{Y}} \un{\delta}_{\un{f}}$ is completely determined by the pullback $X\times_YZ\cong\{(x,k,y):x\in X, \  k\in f^{-1}(1_{x_1(x)}), \ \text{and} \ y=f(x)\}$ of $\delta_{\un{f}}$ along $f$ in $\mathbf{Set}.$

Define then
\begin{equation}
\label{eqn:simply_trans_action}
\phi_{\un{f}}:\mathrm{Eq}(f) \longrightarrow X\times_Y Z, \ (x,v)\mapsto (x,q_{\un{f}}(x,v),f(x)).
\end{equation}
The map $\phi_{\un{f}}$ is injective, because if $q_{\un{f}}(x,v)=q_{\un{f}}(x,u),$ then $v=q_{\un{f}}(x,v)\cdot x=q_{\un{f}}(x,u)\cdot x=u,$ and it is surjective, because for all $x\colon a\rightarrow b$ in $X$ and all $k\in f^{-1}(1_b)$ one has $k=q_{\un{f}}(x,k\cdot x).$

To prove that $\phi_{\un{f}}$ preserves compositions, compute for all $x,v\colon a\rightarrow b,$ $x^\prime,v^\prime\colon b\rightarrow c$ such that $f(x)=f(v)$ and $f(x^\prime)=f(v^\prime)$:
\begin{equation*}
\begin{split}
\phi_{\un{f}}(x^\prime,v^\prime)\cdot \phi_{\un{f}}(x,v)&=\big(x^\prime,q_{\un{f}}(x^\prime,v^\prime),f(x^\prime)\big)\cdot\big(x,q_{\un{f}}(x,v),f(x)\big) \\
&=\Big(x^\prime\cdot x, q_{\un{f}}(x^\prime,v^\prime)\cdot D_{\un{f}}\big(f(x^\prime)\big)\big(q_{\un{f}}(x,v)\big),f(x^\prime)\cdot f(x) \Big) \\
&=\Big(x^\prime\cdot x, q_{\un{f}}(x^\prime,v^\prime)\cdot q_{\un{f}}(x^\prime,x^\prime\cdot q_{\un{f}}(x,v)\big),f(x^\prime\cdot x)\Big)\\
&=\big(x^\prime\cdot x, q_{\un{f}}(x^\prime\cdot x,v^\prime\cdot v), f(x^\prime\cdot x) \big) \ \ \ \ \txt{(by Proposition \ref{prop:prop_retraction}(3))}\\
&=\phi_{\un{f}}(x^\prime\cdot x,v^\prime\cdot v)\\
&=\phi_{\un{f}}\big((x^\prime,v^\prime)\cdot (x,v)\big).
\end{split}
\end{equation*}
The bijection $\phi_{\un{f}}$ is easily seen to result in an isomorphism $\un{\phi}_{\un{f}}$ of small categories, which realizes the required isomorphism \eqref{eqn:phi_f} in $Sl(\mathbf{Cat}_B/\un{Y})$ (the commutativity of the triangle \eqref{eqn:phi_f_slice} is immediate by construction). Moreover, $\un{\phi}_{\un{f}}^{-1}=(\phi_{\un{f}}^{-1},1_B),$ where
\[
\phi_{\un{f}}^{-1}\colon f\times_Y \delta_{\un{f}}=X\times_Y Z\rightarrow \mathrm{Eq}(f)=f\times_Y f, \ (x,k,f(x))\mapsto (x,k\cdot x),
\]
is clearly a (right) group action with respect to the group structure on $\un{\delta}_{\un{f}}.$

 This proves that the direction functor of $Sl(\mathbf{Cat}_B/\un{Y})$ is given by
\begin{equation}
\label{eqn:direction_functor_eq}
d\colon\mathrm{Mal}\big(Sl(\mathbf{Cat}_B/\un{Y})\big)_g\longrightarrow \mathrm{Mod}_{\un{Y}}, \ \un{f}\mapsto d(\un{f})=D_{\un{f}} \ \text{as in } \eqref{eqn:d(f)}.
\end{equation}
\begin{remark}
\label{rmk:d_on_morphisms}
For any two objects $\un{f},\un{g}\in\mathrm{Mal}\big(Sl(\mathbf{Cat}_B/\un{Y})\big)_g,$ denote by $\un{\delta}_{\un{f}}\colon\un{Z}\rightarrow\un{Y}$ and $\un{\delta}_{\un{g}}\colon\un{M}\rightarrow\un{Y}$ the internal abelian groups in $Sl(\mathbf{Cat}_B/\un{Y})$ corresponding to the $\un{Y}$-modules $d(\un{f})=D_{\un{f}}$ and $d(\un{g})=D_{\un{g}}$ respectively, as above. By definition, the arrows $a\rightarrow b$ in $\un{Z}\cong K(\un{f})\rtimes\un{Y}$ are the pairs $(k,y)$ with $y\colon a\rightarrow b$ in $Y$ and $k\in f^{-1}(1_b),$ and similarly the arrows $a\rightarrow b$ in $\un{M}\cong K(g)\rtimes\un{Y}$ are the pairs $(k^\prime,y^\prime)$ with $y^\prime\colon a\rightarrow b$ in $Y$ and $k^\prime\in g^{-1}(1_b).$ Then, if $\un{\vartheta}=(\vartheta,1_B)\colon\un{f}\rightarrow\un{g}$ is a morphism in $\mathrm{Mal}\big(Sl(\mathbf{Cat}_B/\un{Y})\big)_g,$ it is not difficult to see that the morphism $d(\un{\vartheta})=(d(\vartheta),1_B)\colon\un{Z}\rightarrow\un{M}$ is given by $d(\vartheta)(k,y)=(\vartheta(k),y),$ and it is thus completely determined by the restriction of $\vartheta$ to $K(f)=\bigsqcup_{b\in B}f^{-1}(1_b).$
\end{remark}

\subsection{Extensions of small categories and the second cohomology group}
In \cite{cocat, hoff}, an extension of a small category $\un{C}$ by a small category $\un{K}$ is defined as a sequence
\begin{equation}
\label{eqn:ext_of_cat}
\xymatrix{ {\un{K}} \ar[r]^-{\un{i}} &{\un{H}} \ar[r]^-{\un{p}} &{\un{C}}}
\end{equation}
where $\un{H}$ is a small category, $\un{i}$ is a faithful functor (so that $\un{K}$ can be realized as a subcategory of $\un{H}$), and $\un{p}$ is a bijective-on-objects full functor, satisfying the following condition:
\begin{equation}
\tag{$\ast$}
\begin{split}
 \text{for all } &\text{ arrows } h \text{ and } h^\prime \text{ in } \un{H}, \text{ the equality } \un{p}(h)=\un{p}(h^\prime) \text{ holds if and only if } \\ &\text{ there exists a unique arrow } k \text{ of } \un{K} \text{ such that } h^\prime=i(k)\cdot h.
\end{split}
\end{equation}

Clearly, there is no harm in assuming $\un{p}$ to be the identity on objects, so that $\un{p}=(p,1_B)$ is a morphism in $\mathbf{Cat}_B$  for some $B=ob(\un{H})=ob(\un{C}),$ and by $(\ast)$ one has at once:
\begin{itemize}
\item $ob(\un{K})=ob(\un{H})=B,$ and we can identify $i(k)$ with $k$ for every arrow $k$ of $\un{K},$ where $\un{i}=(i,1_B);$
\item if $a,b\in B$ and $a\neq b,$ there is no morphism $k\colon a\rightarrow b$ in $\un{K},$ so that $\un{K}$ is totally disconnected;
\item for all $k\colon b\rightarrow b$ in $\un{K},$ one has $p(k)=1_b;$
\item for all $b\in B,$ the set $\un{K}(b,b)$ of endomorphisms of $b$ in $\un{K}$ is a group;
\item if $h\in \un{H}(b,b)$ and $p(h)=1_b,$ then $h\in\un{K}(b,b).$
\end{itemize}
Thus, in the language of the previous section, an extension of categories \eqref{eqn:ext_of_cat} is simply an $S$-special regular epimorphism $\un{p}$ in $\mathbf{Cat}_B,$ together with $\un{K}=K(\un{p})$ as in \eqref{eqn:K_f}.

Always in the terminology of \cite{cocat} (see also \cite{golasinski}), if $\un{C}\in\mathbf{Cat}_B$ and $A\colon\un{C}\rightarrow\mathbf{Ab}$ is a $\un{C}$-module, an extension of $\un{C}$ by $A$ is an extension of categories \eqref{eqn:ext_of_cat} such that the set of arrows of $\un{K}$ is given by $A^+=\bigsqcup_{b\in B}A(b).$

Since by definition $D_{\un{f}}^+=K(\un{f})$ for all $\un{f}\in\mathrm{Mal}\big(Sl(\mathbf{Cat}_B/\un{Y})\big)_g,$ we conclude that the set of extensions of a small category $\un{Y}\in\mathbf{Cat}_B$ by a $\un{Y}$-module $A$ in the sense of \cite{cocat, golasinski} coincides with the fibre $d^{-1}(A)$ of the direction functor \eqref{eqn:direction_functor_eq}.

Two extensions $\xymatrix{{\mathbb{E}:K(\un{f})\cong\un{A}^+} \ar@{>->}[r] &{\un{X}} \ar[r]^-{\un{f}} &{\un{Y}}}$ and $\xymatrix{{\mathbb{E}^\prime:K(\un{f}^\prime)\cong\un{A}^+} \ar@{>->}[r] &{\un{X}^\prime} \ar[r]^-{\un{f}^\prime} &{\un{Y}}}$ of $\un{Y}$ by $A$ are called equivalent in \cite{cocat, golasinski, hoff} if there exists a morphism $\un{\varphi}$ making the diagram
\begin{equation*}
\xymatrix{
{\mathbb{E}:\un{A}^+} \ar@<.9ex>@{=}[d] \ar@{>->}[r] &{\un{X}} \ar[d]^-{\un{\varphi}} \ar[r]^-{\un{f}} &{\un{Y}} \ar@{=}[d] \\
{\mathbb{E}^\prime:\un{A}^+} \ar@{>->}[r] &{\un{X}^\prime} \ar[r]_-{\un{f}^\prime} &{\un{Y}}
}
\end{equation*}
commute. Then $\un{\varphi}$ is necessarily an isomorphism, thanks to the Short Five Lemma for $S$-special morphisms (whose proof in $\mathbf{Cat}_B$ is just the same as the one given for monoids in \cite{schreier_book}, Proposition 7.2.1, or \cite{MMS18}, Proposition 2.12). Using a notation akin to that of \cite{homology} for the cohomology of groups, the set of equivalence classes of extensions of $\un{Y}$ by $A$ is denoted by $\mathrm{Opext}^1(\un{Y},A)$ in \cite{golasinski}.

Since the direction functor $d$ is conservative, the existence of $\un{\varphi}$ as above is equivalent to $\un{f}$ and $\un{f}^\prime$ being in the same connected component of the fibre $d^{-1}(A),$ in the sense of \eqref{eqn:zig-zag}: thus $\mathrm{Opext}^1(\un{Y},A)=\pi_0\big(d^{-1}(A)\big)$ as sets. To see that they also coincide as groups, recall that the group operation in $\mathrm{Opext}^1(\un{Y},A)$ is given by the Baer sum of (equivalence classes of) extensions, which is defined as follows. Starting with $\mathbb{E}$ and $\mathbb{E}^\prime$ as above, consider first the pullback $\un{P}=(\un{X}\times\un{X}^\prime)\times_{\un{Y}\times\un{Y}}\un{Y}$ in $\mathbf{Cat}_B$ of the product $\un{f}\times\un{f}^\prime$ along the diagonal $\Delta_{\un{Y}}$:
\[
\xymatrix{
{\un{P}} \ar[d] \ar[r]^-{\un{\psi}} \pullback &{\un{Y}} \ar[d]^-{\Delta_{\un{Y}}} \\
{\un{X}\times\un{X}^\prime} \ar[r]_-{\un{f}\times\un{f}^\prime} &{\un{Y}\times\un{Y}.}
}
\]
At the level of arrows, $\un{P}$ can be realized as the pullback $P=\{(x,x^\prime)\in X\times X^\prime:f(x)=f^\prime(x^\prime)\}$ of $f$ along $f^\prime,$ with $\psi\colon(x,x^\prime)\mapsto f(x)=f^\prime(x^\prime).$ We end up with an extension
\begin{equation*}
\xymatrix{{\mathbb{E}\times_{\un{Y}}\mathbb{E}^\prime:K(\un{\psi})} \ar@{>->}[r]^-{\un{\iota}} &{\un{P}} \ar[r]^-{\un{\psi}} &{\un{Y}}}
\end{equation*}
corresponding to the product $\un{f}\times_{\un{Y}}\un{f}^\prime$ in $\mathrm{Mal}\big(Sl(\mathbf{Cat}_B/\un{Y})\big),$ where $K(\psi)\cong\bigsqcup_{b\in B}\big(A(b)\times A(b)\big).$  In particular, we have $d(\un{f}\times_{\un{Y}}\un{f}^\prime)\cong A\times A.$

Next comes the pushforward of $\mathbb{E}\times_{\un{Y}}\mathbb{E}^\prime$ along the codiagonal $\nabla_{\un{A}^+},$ given on the arrows by $\nabla_{A^+}\colon\bigsqcup_{b\in B}\big(A(b)\times A(b)\big)\longrightarrow\bigsqcup_{b\in B}A(b)=A^+,$ $(k,k^\prime)\mapsto k^\prime\cdot k$ (the product in the group $A(b)$). The pushforward construction, which in the abelian case coincides with the pushout and which is sometimes also referred to as a pushout on account of this fact (see for example \cite{holt, golasinski}), is performed in the following way. Observe that the $\un{Y}$-module $A\colon\un{Y}\rightarrow\mathbf{Ab}$ induces a $\un{P}$-module $A_{\un{P}}\colon\un{P}\rightarrow\mathbf{Ab}$ by sending a morphism $(x,x^\prime)\colon a\rightarrow b$ to $A\big(f(x)=f^\prime(x^\prime)\big)\colon A(a)\rightarrow A(b).$ By Proposition \ref{prop:schreier_functors}, the functor $A_{\un{P}}$ corresponds to a Schreier point on $\un{P}$ in $\mathbf{Cat}_B,$ whose domain has $W=\{(\xi,x,x^\prime):(x,x^\prime)\in P, \ (x,x^\prime)\colon a\rightarrow b, \ \text{and } \xi\in A(b)\}$ as the set of arrows, and we obtain a square
\begin{equation*}
\xymatrix{
{K(\un{\psi})} \ar[d]_-{\nabla_{\un{A}^+}} \ar@{>->}[r]^-{\un{\iota}} &{\un{P}} \ar[d]^-{\un{\sigma}} \\
{\un{A}^+} \ar[r]_-{\un{j}} &{\un{W}\cong \un{A}^+\rtimes \un{P},}
}
\end{equation*}
where $j\big(\xi\in A(b)\big)=(\xi,1_b,1_b)$ and $\sigma\big((x,x^\prime):a\rightarrow b\big)=(e_{A(b)},x,x^\prime).$ This square is not commutative, in general.

Consider a coequalizer $\un{r}\colon\un{W}\longrightarrow\un{Q}$ of $\un{\sigma}\un{\iota}$ and $\un{j}\nabla_{\un{A}^+},$ which can be described (at the level of arrows) by the quotient set $Q=W/_{\sim}$ of $W$ under the equivalence relation generated by imposing $(k^\prime\cdot k,1_b,1_b)\sim(e_{A(b)},k,k^\prime).$

Then one proves that the map $\eta\colon Q\longrightarrow Y,$ $[(k,x,x^\prime)]_{\sim}\mapsto f(x)=f^\prime(x^\prime),$ is well defined, and eventually we get an extension
\begin{equation}
\label{eqn:baer_sum}
\xymatrix{{\mathbb{E}+_{\un{Y}}\mathbb{E}^\prime:\un{A}^+} \ar@{>->}[r]^-{\un{r}\un{j}} &{\un{Q}} \ar[r]^-{\un{\eta}} &{\un{Y}}}
\end{equation}
of $\un{Y}$ by $A,$ which is the Baer sum of $\mathbb{E}$ and $\mathbb{E}^\prime,$ together with a morphism of extensions
\begin{equation}
\begin{aligned}
\label{eqn:cocart_on_product}
\xymatrixcolsep{2.5pc}
\xymatrix{
{\mathbb{E}\times_{\un{Y}}\mathbb{E}^\prime:K(\un{\psi})} \ar@<2.4pc>[d]_-{\nabla_{\un{A}^+}} \ar@{>->}[r]^-{\un{\iota}} &{\un{P}} \ar[d]^-{\un{\mu}=\un{r}\un{\sigma}} \ar[r]^-{\un{\psi}} &{\un{Y}} \ar@{=}[d] \\
{\mathbb{E}+_{\un{Y}}\mathbb{E}^\prime: \ \ \ \un{A}^+} \ar@{>->}[r]_-{\un{r}\un{j}} &{\un{Q}} \ar[r]_-{\un{\eta}} &{\un{Y}.}
}
\end{aligned}
\end{equation}
(A detailed account on the above construction in the case $B=1$ can be found in \cite{MMS18}.)

By Remark \ref{rmk:d_on_morphisms}, the morphism $\un{\mu}$ in \eqref{eqn:cocart_on_product} is sent by the direction functor to the group operation on (the internal abelian group $\un{\delta}_{\un{f}}\cong\un{\delta}_{\un{f}^\prime}$ in $Sl(\mathbf{Cat}_B/\un{Y})$ corresponding to) the $\un{Y}$-module $A,$ and since $d=d_{\un{Y}}$ is a conservative cofibration we conclude that indeed the Baer sum \eqref{eqn:baer_sum} coincides with the tensor product $\un{f}\otimes\un{f}^\prime$ induced in the fibre $d_{\un{Y}}^{-1}(A)$ as in Proposition \ref{prop:fibres_abelian_groups}.

By \cite{golasinski}, Theorem 1.2, the group $\mathrm{Opext}^1(\un{Y},A)=\pi_0\big(d_{\un{Y}}^{-1}(A)\big)$ is a realization of the second cohomology group $H^2(\un{Y},A)$ in the sense of \cite{cocat}.
\section{The higher cohomology groups} \label{section higher cohomology}
We now turn our attention to the cohomology groups $H^n(\un{Y},A)$ of \cite{cocat, golasinski}, with $n\geq 3,$ and show that they also may be framed in terms of (higher) direction functors $d_n$ on the category $Sl(\mathbf{Cat}_B/\un{Y}).$
\subsection{Crossed modules in $\mathbf{Cat}_B$ and the cohomology groups $H^n(\un{Y},A)$ for $n\geq 3$}
First, recall that a crossed module \cite{whitehead_1, whitehead_2} (in the category $\mathbf{Gp}$ of groups) is a pair $(\delta\colon A\rightarrow X,\omega)$ where $A,$ $X$ are groups, $\delta$ is a group homomorphism and $\omega\colon X\rightarrow\mathrm{Aut}(A)$ is a group action, satisfying the axioms $\delta\big(\omega(x)(a)\big)=x\cdot\delta(a)\cdot x^{-1}$ and $\omega\big(\delta(a)\big)(a')=a\cdot a'\cdot a^{-1}$ for all $a,a'\in A$ and $x\in X.$ The relation between crossed modules and the third cohomology group $H^3(G,M,\omega)$ of a group $G$ with coefficients in the $G$-module $(M,\omega),$ as introduced in \cite{EML}, is studied in \cite{holt, huebschmann}. By a classical result  \cite{BS}, the category $\mathbf{Xmod}$ of crossed modules is equivalent to the category $\mathrm{Cat}(\mathbf{Gp})$ of internal categories in $\mathbf{Gp}.$

If $A$ and $X$ are monoids and $\omega\colon X\rightarrow\mathrm{End}(A)$ is a monoid action of $X$ on $A,$ a monoid homomorphism $\delta$ as above is called a \emph{crossed semimodule} if the analogous, monoid friendly axioms $\delta\big(\omega(x)(a)\big)+x=x+\delta(a)$ and $\omega\big(\delta(a)\big)(a')+a=a+a'$ hold (using an additive notation); this notion was introduced in \cite{JS}, and later independently in \cite{crossed-semimodules}, where it was also shown that the equivalence between crossed modules and internal categories in $\mathbf{Gp}$ extends to an equivalence $\mathbf{Xsmod}\cong\mathrm{SCat}(\mathbf{Mon})$ between crossed semimodules and Schreier internal categories in $\mathbf{Mon}$ (i.e., internal categories $\xymatrix{{\underline{X}:X_2} \ar[r]^-{m_{\un{X}}} &{X_1} \ar@<.9ex>[r]^-{x_0} \ar@<-.9ex>[r]_-{x_1} &{X_0} \ar[l]|-{s_0}}$ such that the pair $(x_0,s_0)$ is a Schreier point).

Armed with the tools of Definition \ref{def:action_Cat_B}, the notion of crossed semimodule can be widened to the general case of $\mathbf{Cat}_B$ in the obvious way:
\begin{definition}[\cite{a1}]
\label{def:mor_of_xsmod}
A crossed semimodule in $\mathbf{Cat}_B$ is a pair $(\un{\delta}\colon\un{A}\rightarrow\un{X},\un{\omega})$ where $\un{\delta}$ is a morphism in $\mathbf{Cat}_B$ and $\un{\omega}\colon\un{X}\rightarrow E(\un{A})$ is an action, satisfying the following axioms:
\begin{enumerate}
\item $\delta\big(\omega(x)(z)\big)\cdot x=x\cdot \delta(z)$ (composition in $\un{X}$) for all $x\in\un{X}(a,b)$ and $z\in\un{A}(a,a),$ for all $a,b\in B;$
\item $\omega\big(\delta(y)\big)(z)\cdot y=y\cdot z$ (composition in $\un{A}$) for all $y\in\un{A}(a,b)$ and $z\in\un{A}(a,a),$ for all $a,b\in B.$
\end{enumerate}
A morphism $(\un{\delta}\colon\un{A}\rightarrow\un{X},\un{\omega})\rightarrow(\un{\delta}'\colon\un{A}'\rightarrow\un{X}',\un{\omega}')$ of crossed semimodules in $\mathbf{Cat}_B$ is then a pair $(\un{\varphi},\un{\psi})$ of morphisms in $\mathbf{Cat}_B$ such that the square
\begin{equation}
\label{eqn:mor_of_xsmod}
\begin{aligned}
\xymatrix{
{\un{A}} \ar[d]_-{\un{\varphi}} \ar[r]^-{\un{\delta}} &{\un{X}} \ar[d]^-{\un{\psi}} \\
{\un{A}'} \ar[r]_-{\un{\delta}'} &{\un{X}'}
}
\end{aligned}
\end{equation}
commutes, and for every $x\in\un{X}(a,b)$ and $z\in\un{A}(a,a)$ (for every $a,b\in B$) the equality
\begin{equation*}
\varphi\big(\omega(x)(z)\big)=\omega'\big(\psi(x)\big)\big(\varphi(z)\big),
\end{equation*}
giving the equivariance with respect to the action, holds.
\end{definition}
We shall denote by $\mathbf{Xsmod}(\mathbf{Cat}_B)$ the category of crossed semimodules and morphisms \eqref{eqn:mor_of_xsmod} in $\mathbf{Cat}_B.$ It is clear that, for $B=1,$ this notion coincides with the one introduced in \cite{JS} for monoids.

As was the case for actions and Schreier points (cf.~Remark~\ref{rmk:actions_and_schreier_points_Cat_B}), it is proven in \cite{a1} that the equivalence between crossed semimodules and Schreier internal categories, valid in $\mathbf{Cat}_1\cong\mathbf{Mon},$ is an instance of a more general adjunction $\Lambda\dashv\Sigma$ between the functors
\begin{equation*}
\Lambda\colon\mathrm{SCat}(\mathbf{Cat}_B)\longrightarrow\mathbf{Xsmod}(\mathbf{Cat}_B), \ (\xymatrix{{\un{Z}} \ar@<.9ex>[r]^-{\un{u}} \ar@<-.9ex>[r]_-{\un{v}} &{\un{X}} \ar[l]|-{\un{w}}})\mapsto \mathcal{K}(\xymatrix{{\un{Z}} \ar@<.5ex>[r]^-{\un{u}} &{\un{X}} \ar@<.5ex>[l]^-{\un{w}}}),
\end{equation*}
where $\mathcal{K}$ is as in \eqref{eqn:K}, and
\begin{equation}
\label{eqn:Sigma}
\Sigma\colon\mathbf{Xsmod}(\mathbf{Cat}_B)\longrightarrow\mathrm{SCat}(\mathbf{Cat}_B), \ (\un{\delta}\colon\un{A}\rightarrow\un{X},\un{\omega})\mapsto (\xymatrix{{\un{A}\rtimes_{\un{\omega}}\un{X}} \ar@<.9ex>[r]^-{\un{\pi}} \ar@<-.9ex>[r]_-{\un{\gamma}} &{\un{X}} \ar[l]|-{\un{\sigma}}}),
\end{equation}
where $\gamma\colon P(\un{A},\un{X})\rightarrow X,$ $(b\xrightarrow{x} b,a\xrightarrow{y} b)\mapsto\delta(x)\cdot y.$ This adjunction is an equivalence if and only if $|B|\leq1.$

Again following \cite{a1}, define a crossed semimodule $(\un{\delta}\colon\un{A}\rightarrow\un{X},\un{\omega})$ in $\mathbf{Cat}_B$ to be a \emph{crossed module} if $\un{A}$ is a totally disconnected groupoid on $B,$ and denote by $\mathbf{Xmod}(\mathbf{Cat}_B)\subseteq\mathbf{Xsmod}(\mathbf{Cat}_B)$ the full subcategory of crossed modules. Then one can prove:
\begin{proposition}[\cite{a1}]
\label{prop:scat_xmod}
The adjunction $\Lambda\dashv\Sigma$ yields an equivalence
\begin{equation*}
\mathrm{SGpd}(\mathbf{Cat}_B)\cong\mathbf{Xmod}(\mathbf{Cat}_B)
\end{equation*}
between Schreier internal groupoids and crossed modules in $\mathbf{Cat}_B.$
\end{proposition}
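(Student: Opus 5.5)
We would prove that the adjunction $\Lambda\dashv\Sigma$ restricts to an equivalence between $\mathrm{SGpd}(\mathbf{Cat}_B)$ and $\mathbf{Xmod}(\mathbf{Cat}_B)$. The strategy is to show three things: that both functors land in the right subcategories, and that the unit and counit are invertible there. The adjunction itself and the description of $\mathcal{K}$ as a fully faithful coreflection onto totally disconnected actions (Remark \ref{rmk:actions_and_schreier_points_Cat_B}) will be taken from \cite{a1}.

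\textbf{Step 1: $\Lambda$ lands in crossed modules.} Let $(\un{u},\un{v},\un{w})$ be a Schreier internal groupoid on $\un{X}$. Then $\Lambda$ of it is $\mathcal{K}(\un{u},\un{w})=(K(\un{u}),\un{\omega})$, with $\un{\delta}$ the restriction of $\un{v}$ to $K(\un{u})$. By construction $K(\un{u})$ is totally disconnected. Each $u^{-1}(1_b)$ is a group, because the inverse of $k$ in the internal groupoid lies again in $u^{-1}(1_b)$, since $u$ sends the inverse to the inverse of $1_b$.

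\textbf{Step 2: $\Sigma$ lands in Schreier groupoids.} Take a crossed module $(\un{\delta}\colon\un{A}\to\un{X},\un{\omega})$ with $\un{A}$ a totally disconnected groupoid. We must show that the reflexive graph $\Sigma(\un{\delta},\un{\omega})$ is a groupoid. Since $\mathbf{Cat}_B$ is not Mal'tsev, we cannot just invoke the Lawvere condition; instead we use that $(\un{\pi},\un{\sigma})$ is a Schreier point and that $\mathbf{Cat}_B$ is $S$-protomodular. The cleanest route is to write down the inverse explicitly. For an arrow $(z,y)$ with $z\in\un{A}(b,b)$ and $y\colon a\to b$, the candidate inverse in the internal category has $\pi$-component $\gamma(z,y)=\delta(z)\cdot y$ and $\gamma$-component $y$. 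This forces it to be the pair $(z^{-1},\delta(z)\cdot y)$, using the group inverse in $\un{A}(b,b)$.

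One then checks the two composites with the internal composition $\mu\big((z,y),(z',y')\big)=(z'\cdot z,y)$, which is defined when $\delta(z)\cdot y=y'$. Both composites come out as identities, namely $(1_b,y)$ and $(1_b,\delta(z)\cdot y)$. This uses only the group structure of $\un{A}(b,b)$, and it is where total disconnectedness of $\un{A}$ matters. Functoriality of the inverse map with respect to the composition of $\un{A}\rtimes_{\un{\omega}}\un{X}$ follows from axiom (1) of Definition \ref{def:mor_of_xsmod}. Alternatively, in $S$-protomodular categories an internal category whose domain point lies in $S$ is automatically a groupoid, and one may cite \cite{mal'tsev-reflection} if such a result is available.

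\textbf{Step 3: unit and counit are isomorphisms.} For the counit $\Lambda\Sigma\Rightarrow 1$ at a crossed module: $K(\un{\pi})$ has arrows $(z,1_b)$ with $z\in\un{A}(b,b)$. Because $\un{A}$ is totally disconnected, this is exactly $\un{A}$, and the induced action and boundary are $\un{\omega}$ and $\un{\delta}$. For the unit $1\Rightarrow\Sigma\Lambda$ at a Schreier groupoid, Proposition \ref{prop:char_schreier_points} gives the comparison map $\un{Z}\to K(\un{u})\rtimes\un{X}$, namely $z\mapsto(k,u(z))$ where $z=k\cdot wu(z)$. This map is a bijection, and it is an isomorphism of points by Remark \ref{rmk:actions_and_schreier_points_Cat_B}. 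It remains to see that it is compatible with the second leg. We need $v(z)=\delta(k)\cdot u(z)$, and this holds because $v(k\cdot wu(z))=v(k)\cdot vw(u(z))=v(k)\cdot u(z)$. Compatibility with composition is automatic, since the internal composition of a groupoid is determined by the underlying reflexive graph.

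\textbf{Main obstacle.} The hardest part is Step 2, verifying that the explicit inverse respects the composition of $\un{A}\rtimes_{\un{\omega}}\un{X}$. There I would try first to reduce to fibrewise checks in the monoids $\un{X}(b,b)$ together with the action, imitating the case $B=1$.
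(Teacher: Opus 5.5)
Your Step 1 has a genuine gap. The internal inverse $\iota(k)$ of $k\in u^{-1}(1_b)$ satisfies $u(\iota(k))=v(k)$, not $1_b$. So in general it does not lie in $u^{-1}(1_b)$, and it is an inverse for the internal composition $m$, not for the composition of $\un{Z}$. The phrase ``$u$ sends the inverse to the inverse of $1_b$'' presupposes that $k$ has an inverse in $\un{Z}$, which is exactly what must be proved. Your argument never uses the Schreier hypothesis, and without it the claim is false. The kernel pair $\mathbb{N}\times\mathbb{N}\rightrightarrows\mathbb{N}$ of $\mathbb{N}\to 0$ is an internal groupoid in $\mathbf{Cat}_1\cong\mathbf{Mon}$ with $u^{-1}(1)\cong\mathbb{N}$.

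The missing idea is the interchange law combined with Schreier uniqueness. Since $m$ is a functor on $\un{Z}\times_{\un{X}}\un{Z}$, and $k=m(1_b,k)$, $\iota(k)=m(\iota(k),1_b)$, you get $k\cdot\iota(k)=m(\iota(k),k)=w(v(k))$. Decompose $\iota(k)=k'\cdot w(v(k))$ with $k'\in u^{-1}(1_b)$ (Proposition~\ref{prop:char_schreier_points}). Then $(k\cdot k')\cdot w(v(k))=1_b\cdot w(v(k))$, so $k\cdot k'=1_b$ by uniqueness. A monoid in which every element has a right inverse is a group. Alternatively, $(z,z')\mapsto\big(z,m(\iota(z),z')\big)$ identifies the kernel-pair point of $\un{u}$ with the pullback of $(\un{u},\un{w})$ along $\un{v}$. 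Hence $\un{u}$ is $S$-special, and the paper's observation that fibres of $S$-special morphisms are groups applies.

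There are also three smaller points.
\begin{itemize}
\item The fallback you suggest in Step 2 is false. The crossed semimodule $\mathbb{N}\to 0$ in $\mathbf{Mon}$ gives the Schreier internal category $\mathbb{N}\rightrightarrows 1$ (composition is addition), which is not a groupoid. This is precisely why crossed semimodules are strictly more general than crossed modules.
\item Your explicit inverse $(z^{-1},\delta(z)\cdot y)$ in Step 2 is correct, and its functoriality is not a real obstacle. Given this inverse, the map $(z_1,z_2)\mapsto(z_1,m(z_1,z_2))$ from $\un{Z}\times_{\un{X}}\un{Z}$ to $\mathrm{Eq}(\un{u})$ is bijective on arrows, hence an isomorphism in $\mathbf{Cat}_B$. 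So $\iota$ is automatically a composite of morphisms. A hand check would in fact need axiom (2) as well as axiom (1).
\item In Step 3, ``the internal composition of a groupoid is determined by its reflexive graph'' fails in general. Already in $\mathbf{Set}$, $G\rightrightarrows 1$ carries one groupoid structure for each group structure on $G$ with the prescribed unit. Here it does hold, because interchange and the Schreier decomposition force $m(z_1,z_2)=k_2\cdot z_1$, where $z_2=k_2\cdot wu(z_2)$. More simply, the unit of $\Lambda\dashv\Sigma$ is already an internal functor, so bijectivity on arrows suffices.
\end{itemize}
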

This equivalence extends to $\mathbf{Cat}_B,$ for an arbitrary set $B,$ the analogous result of \cite{crossed-semimodules} for the case $B=1.$

Our goal, now, is to compute the $1$-dimensional direction functor $d_1\colon 1\text{-}\mathrm{Asp}(\mathcal{C})\rightarrow\mathrm{Ab}(\mathcal{C})$ for the naturally Mal'tsev category $\mathcal{C}=\mathrm{Mal}\big(Sl(\mathbf{Cat}_B/\un{Y})\big),$ and to show that its fibres describe the third cohomology groups $H^3(\un{Y},A)$ of \cite{golasinski} in the same way as the fibres of \eqref{eqn:direction_functor_eq} provide a description for the second cohomology groups $H^2(\un{Y},A).$

To this end, observe that a groupoid in $\mathrm{Mal}\big(Sl(\mathbf{Cat}_B/\un{Y})\big)$ amounts to a commutative diagram
\begin{equation}
\label{eqn:1-grpd-special}
\begin{aligned}
\xymatrix{
{\un{X}_1} \ar[rd]_-{\un{f}_1} \ar@<.9ex>[rr]^-{\un{u}} \ar@<-.9ex>[rr]_-{\un{v}} &\ &{\un{X}_0} \ar[ll]|-{\un{w}} \ar[ld]^-{\un{f}_0} \\
&{\un{Y}}
}
\end{aligned}
\end{equation}
such that $\un{f}_0,$ $\un{f}_1$ are $S$-special morphisms in $\mathbf{Cat}_B,$ the groups $f_1^{-1}(1_b)$ and $f_0^{-1}(1_b)$ are abelian for every $b\in B,$ and $\xymatrix{ {\un{X}_1} \ar@<.9ex>[r]^-{\un{u}} \ar@<-.9ex>[r]_-{\un{v}} &{\un{X}_0} \ar[l]|-{\un{w}}}$ is a Schreier groupoid in $\mathbf{Cat}_B$ (indeed, by Remarks \ref{rmks:s-special} $(3)$ and $(4),$ $\un{u}$ is $S$-special because so are $\un{f}_0$ and $\un{f}_1$ and the equality $\un{f}_0\un{u}=\un{f}_1$ holds, and it follows that $(\un{u},\un{w})$ is a Schreier point). By Proposition \ref{prop:scat_xmod}, the latter corresponds to the crossed module $\xymatrix{ {K(\un{u})} \ar[r]^-{\un{\delta}} &{\un{X}_0} }$ given by the restriction $\un{v}_|$ of $\un{v}$ to $K(\un{u}).$

Next, we know that $\un{f}_0=(f_0,1_B)$ has global support, as an object in $Sl(\mathbf{Cat}_B/\un{Y}),$ if and only if the map $f_0$ is surjective, and moreover we have:
\begin{lemma}
The groupoid \eqref{eqn:1-grpd-special} is connected if and only if, in the crossed module $\un{\delta}=\un{v}_|,$ the image $Im(\delta)$ of the map $\delta\colon \bigsqcup_{b\in B}u^{-1}(1_b)\rightarrow X_0$ coincides with $K(f_0)=\bigsqcup_{b\in B}f_0^{-1}(1_b).$
\end{lemma}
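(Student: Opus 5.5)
Throughout, I take \emph{connected} in its usual sense for an internal groupoid in a category with finite products. The groupoid \eqref{eqn:1-grpd-special} is connected when the induced morphism $(\un{u},\un{v})\colon\un{X}_1\rightarrow\un{X}_0\times\un{X}_0$ is a regular epimorphism, with the product taken in $\mathrm{Mal}\big(Sl(\mathbf{Cat}_B/\un{Y})\big)$. Products there are computed as in $Sl(\mathbf{Cat}_B/\un{Y})$, and hence as in $\mathbf{Cat}_B/\un{Y}$. So $\un{X}_0\times\un{X}_0$ is $\un{f}_0\times_{\un{Y}}\un{f}_0$, whose set of arrows is the kernel pair $\mathrm{Eq}(f_0)$. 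Regular epimorphisms in $\mathbf{Cat}_B$ are exactly the morphisms that are surjective on arrows. The plan is therefore to show that the map
\[
(u,v)\colon X_1\longrightarrow\mathrm{Eq}(f_0)
\]
is surjective if and only if $Im(\delta)=K(f_0)$, where $\delta$ is the restriction of $v$ to $K(\un{u})=\bigsqcup_{b}u^{-1}(1_b)$.

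First I check that $Im(\delta)\subseteq K(f_0)$ always holds. If $z\in u^{-1}(1_b)$, then $f_0v(z)=f_1(z)=f_0u(z)=f_0(1_b)=1_b$, since $\un{f}_0\un{u}=\un{f}_1=\un{f}_0\un{v}$. So the statement reduces to the reverse inclusion.

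Next, suppose $(u,v)$ is surjective and take $k\in f_0^{-1}(1_b)$. Then $(1_b,k)\in\mathrm{Eq}(f_0)$, so some $z\in X_1$ satisfies $u(z)=1_b$ and $v(z)=k$. This $z$ lies in $u^{-1}(1_b)$ and $\delta(z)=k$, which proves $K(f_0)\subseteq Im(\delta)$.

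Conversely, suppose $Im(\delta)=K(f_0)$ and take $(x,x')\in\mathrm{Eq}(f_0)$ with $x,x'\colon a\rightarrow b$. Since $\un{f}_0$ is $S$-special, the Schreier retraction gives $k=q_{\un{f}_0}(x,x')\in f_0^{-1}(1_b)$ with $k\cdot x=x'$. By hypothesis there is $z\in u^{-1}(1_b)$ with $v(z)=k$. Put $t=z\cdot w(x)\colon a\rightarrow b$ in $X_1$. Because $w$ is a common section of $u$ and $v$, we get $u(t)=1_b\cdot x=x$ and $v(t)=k\cdot x=x'$. Hence $(u,v)$ is surjective.

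The only delicate point is this last direction, where the arrow $x'$ must be reached from $x$. It is resolved by the factorization $x'=q_{\un{f}_0}(x,x')\cdot x$ through the Schreier retraction, together with the fact that $w$ is a section of both $u$ and $v$.
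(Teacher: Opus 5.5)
Your proof is correct and follows the paper's argument essentially step for step. You take connectedness to mean surjectivity of $\langle u,v\rangle\colon X_1\to\mathrm{Eq}(f_0)$, lift $(1_b,k)$ for the inclusion $K(f_0)\subseteq Im(\delta)$, and for the converse factor $x'=q_{\un{f}_0}(x,x')\cdot x$ and lift via $z\cdot w(x)$. Your version also makes explicit the lift $w(x)$ of $x$, which the paper leaves implicit when it writes $z_1\cdot x_1$ without ever defining $x_1$.
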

\begin{proof}
By definition, \eqref{eqn:1-grpd-special} is connected if and only if $\la u,v\ra\colon X_1\rightarrow\mathrm{Eq}(f_0)$ is surjective. When this holds and an arrow $x_0\colon b\rightarrow b$ in $X_0$ is such that $f_0(x_0)=1_b,$ there exists $x_1\in u^{-1}(1_b)$ such that $(1_b,x_0)=\la u,v\ra(x_1),$ so that $x_0=v(x_1)=\delta(x_1).$ The fact that $Im(\delta)\subseteq K(f_0)$ is always true, by $f_0u=f_0v.$

Conversely, if $Im(\delta)=K(f_0)$ and $(x_0\colon a\rightarrow b,y_0\colon a\rightarrow b)\in\mathrm{Eq}(f_0),$ we can write $y_0=z_0\cdot x_0$ for some (unique) $z_0\in f_0^{-1}(1_b),$ since $\un{f}_0$ is $S$-special: then, by the assumption, $z_0=\delta(z_1)=v(z_1)$ for some $z_1\in u^{-1}(1_b),$ and it follows that $(x_0,y_0)=\la u,v\ra(z_1\cdot x_1).$
\end{proof}
By the definition of $d_1,$ the $1$-direction of \eqref{eqn:1-grpd-special} is given by the kernel of $\la\un{u},\un{v}\ra\colon K(\un{f}_1)\longrightarrow  K(\un{f}_0)\times K(\un{f}_0),$ i.e. by a pullback of $\bigsqcup_{b\in B} f_1^{-1}(1_b) \longrightarrow \bigsqcup_{b\in B}(f_0^{-1}\big(1_b)\times (f_0^{-1}(1_b)\big),$ $(x\colon b\rightarrow b)\mapsto\big(u(x),v(x)\big),$ along the map $B\longrightarrow \bigsqcup_{b\in B}(f_0^{-1}\big(1_b)\times (f_0^{-1}(1_b)\big),$ $b\mapsto(1_b,1_b).$ The latter can be realized by the disjoint union $\bigsqcup_{b\in B}A(b),$ where $A(b)=\{x\in f_1^{-1}(1_b):u(x)=v(x)=1_b\},$ and corresponds to the set of arrows of the small category $\un{A}^+\in\mathbf{Cat}_B$ given by the $\un{Y}$-module $A\colon \un{Y}\rightarrow\mathbf{Ab},$ $(y\colon a\rightarrow b)\longmapsto \big(A(a)\rightarrow A(b),k\mapsto {^yk}\big),$ where ${^yk}$ is defined as $q_{\un{f}_1}(x_1, x_1\cdot k)$ with $x_1$ any arrow satisfying $f_1(x_1)=y$; since $\un{u},\un{v}$ are morphisms in $Sl(\mathbf{Cat}_B/\underline{Y})$, by \ref{lemma:q_comp} we have
\begin{equation}
	\label{eqn:is_mod}
	u({^yk}) = u (q_{\un{f}_1}(x_1, x_1\cdot k)) = q_{\un{f}_0}(u(x_1), u(x_1\cdot k)) = q_{\un{f}_0}(u(x_1), u(x_1))=1_b
\end{equation}
and similarly $v({^yk}) = 1_b$, so that ${^yk}\in A(b)$ whenever $k\in A(a).$

Thus, an aspherical groupoid \eqref{eqn:1-grpd-special} in $\mathrm{Mal}\big(Sl(\mathbf{Cat}_B/\un{Y})\big)$ with $1$-direction $A\in\mathrm{Mod}_{\un{Y}}$ corresponds to a crossed sequence
\begin{equation}
\label{eqn:1-fold-ext}
\begin{aligned}
\xymatrixrowsep{1pc}
\xymatrixcolsep{1.7pc}
\xymatrix{{\un{A}^+} \ar@{>->}[r] &{K(\un{u})} \ar@{->>}[rd] \ar[rr]^-{\un{\delta}=\un{v}_|} &\ &{\un{X}_0} \ar@{->>}[r]^-{\un{f}_0} &{\un{Y}.} \\
&\ &{K(\un{f}_0)} \ar@{>->}[ru] &\ &\
}
\end{aligned}
\end{equation}
Observe that in $K(u)=\bigsqcup_{b\in B}u^{-1}(1_b)$ we have $u^{-1}(1_b)\subseteq f_1^{-1}(1_b)\in\mathbf{Ab}$ for every $b\in B,$ and that in the above diagram both $\xymatrixcolsep{1.7pc}\xymatrix{{\un{A}^+} \ar@{>->}[r] &{K(\un{u})} \ar@{->>}[r] &{K(\un{f}_0)}}$ and $\xymatrixcolsep{1.7pc}\xymatrix{{K(\un{f}_0)} \ar@{>->}[r] &{\un{X}_0} \ar@{->>}[r]^-{\un{f}_0} &{\un{Y}}}$ are extensions of small categories in the sense of \eqref{eqn:ext_of_cat} (using the fact that $\un{v}$ is $S$-special by Remark \ref{rmks:s-special}$(3),$ since so are $\un{f}_0,$ $\un{f}_1$ and $\un{f}_1=\un{f}_0\un{v}$).

This means that \eqref{eqn:1-fold-ext} is a $1$-fold extension in the sense of \cite{golasinski}.

Conversely, consider a crossed module $\un{\delta}$ in $\mathbf{Cat}_B$ as in the diagram
\begin{equation*}
\xymatrixrowsep{1pc}
\xymatrixcolsep{1.7pc}
\xymatrix{{\un{A}^+} \ar@{>->}[r] &{\un{Z}^+} \ar@{->>}[rd] \ar[rr]^-{\un{\delta}} &\ &{\un{X}_0} \ar@{->>}[r]^-{\un{f}_0} &{\un{Y},} \\
&\ &{K(\un{f}_0)} \ar@{>->}[ru] &\ &\
}
\end{equation*}
where $\un{Z}^+$ comes from a $\un{Y}$-module $Z\colon \un{Y}\longrightarrow\mathbf{Ab}$ and both sequences
\begin{equation*}
\begin{aligned}
\xymatrixcolsep{1.7pc}\xymatrix{{\un{A}^+} \ar@{>->}[r] &{\un{Z}^+} \ar@{->>}[r] &{K(\un{f}_0),}} \ \  \xymatrixcolsep{1.7pc}\xymatrix{{K(\un{f}_0)} \ar@{>->}[r] &{\un{X}_0} \ar@{->>}[r]^-{\un{f}_0} &{\un{Y}}}
\end{aligned}
\end{equation*}
are extensions of small categories \eqref{eqn:ext_of_cat}.

Then, in $K(f_0)=\bigsqcup_{b\in B}f_0^{-1}(1_b),$ every $f_0^{-1}(1_b)$ is an abelian group, and it follows by Proposition \ref{prop:char_mal'tsev_objects_slice} that $\un{f}_0\in\mathrm{Mal}\big(Sl(\mathbf{Cat}_B/\un{Y})\big).$

Moreover, by Proposition \ref{prop:scat_xmod}, $\un{\delta}$ corresponds to a Schreier internal groupoid
\begin{equation*}
\xymatrix{
{\un{Z}^+\rtimes_{\un{\omega}}\un{X}_0} \ar[rd]_-{\un{f}_1=\un{f}_0\un{\pi}=\un{f}_0\un{\gamma}} \ar@<.9ex>[rr]^-{\un{\pi}} \ar@<-.9ex>[rr]_-{\un{\gamma}} &\ &{\un{X}_0} \ar[ll]|-{\un{\sigma}} \ar[ld]^-{\un{f}_0} \\
&{\un{Y}}
}
\end{equation*}
in $\mathbf{Cat}_B$ (as in \eqref{eqn:Sigma}), where $\un{\omega}\colon\un{X}_0\longrightarrow E(\un{Z}^+)$ is the given action in the crossed module $\un{\delta}.$

Observe that, since $Z$ is a functor on $\mathbf{Ab}$ and by assumption $K(f_0)$ coincides with the image of $\delta,$ for every $k\in f_0^{-1}(1_b)$ and every $z\in \un{Z}^+(b,b)$ we have $\omega(k)(z)=\omega\big(\delta(x)\big)(z)=x\cdot z\cdot x^{-1}=z$ for some other $x\in \un{Z}^+(b,b)$: then, if $(z\colon b\rightarrow b,x_0\colon a\rightarrow b),$ $(w\colon b\rightarrow b,r_0\colon a\rightarrow b)\in P(\un{Z}^+,\un{X}_0)$ (in the notation of Definition \ref{def:action_Cat_B}) are such that $f_0(x_0)=f_0(r_0),$ using the fact that $\un{f}_0$ is $S$-special we have $r_0=k_0\cdot x_0$ for a unique $k_0\in f_0^{-1}(1_b),$ so that $(w,r_0)=(w\cdot z^{-1},k_0)\cdot(z,x_0).$ Thus, $\un{f}_1$ is also $S$-special. Finally, the group $f_1^{-1}(1_b)=\un{Z}^+(b,b)\times f_0^{-1}(1_b)$ is abelian, as a product of abelian groups, so that $\un{f}_1\in\mathrm{Mal}\big(Sl(\mathbf{Cat}_B/\un{Y})\big).$

By \cite{golasinski}, Theorem 1.4, we conclude that the group $\mathrm{Opext}^2(\un{Y},A)=\pi_0\big(d_{1,\un{Y}}^{-1}(A)\big)$ of connected components of the fibre $d_{1,\un{Y}}^{-1}(A)$ of the $1$-dimensional direction functor
\begin{equation*}
d_1=d_{1,\un{Y}}\colon (1\text{-})\mathrm{Asp}\big(Sl(\mathbf{Cat}_B/\un{Y})\big)\longrightarrow\mathrm{Mod}_{\un{Y}}
\end{equation*}
is a realization of the third cohomology group $H^3(\un{Y},A)$ in the sense of \cite{cocat}.

The same theorem applies for the general case $n>1.$ Indeed, given an $n$-groupoid
\begin{equation}
\label{eqn:n-grpd-special}
\begin{aligned}
\xymatrixrowsep{2.5pc}
\xymatrixcolsep{2.5pc}
\xymatrix{
{\un{X}_n} \ar@/_1pc/[rrd]_-{\un{f}_n} \ar@<.9ex>[r]^-{\un{u}_n} \ar@<-.9ex>[r]_-{\un{v}_n}  &{\dots} \ar[l]|{\un{w}_n} \ar@<.9ex>[r] \ar@<-.9ex>[r] &{\un{X}_2} \ar[d]_-{\un{f}_2} \ar[l] \ar@<.9ex>[r]^-{\un{u}_2} \ar@<-.9ex>[r]_-{\un{v}_2} &{\un{X}_1} \ar[ld]^(.40){\un{f}_1} \ar[l]|-{\un{w}_2}  \ar@<.9ex>[r]^-{\un{u}_1} \ar@<-.9ex>[r]_-{\un{v}_1} &{\un{X}_0} \ar@/^1pc/[lld]^-{\un{f}_0} \ar[l]|-{\un{w}_1} \\
&\ &{\un{Y}}
}
\end{aligned}
\end{equation}
in $\mathrm{Mal}\big(Sl(\mathbf{Cat}_B/\un{Y})\big),$ for every $i=0,\dots,n$ the morphism $\un{f}_i$ is $S$-special and $f_i^{-1}(1_b)$ is an abelian group (for every $b\in B$): then, for $i=1,\dots,n,$ each $(\un{u}_i,\un{w}_i)$ is a Schreier point, and consequently $\xymatrix{ {\un{X}_i} \ar@<.9ex>[r]^-{\un{u}_i} \ar@<-.9ex>[r]_-{\un{v}_i} &{\un{X}_{i-1}} \ar[l]|-{\un{w}_i}}$ is a Schreier groupoid in $\mathbf{Cat}_B,$ corresponding by Proposition \ref{prop:scat_xmod} to the crossed module $K(\un{u}_i)\xlongrightarrow{{\un{v}_i}_|}\un{X}_{i-1}.$ Observe that, for $i=2,\dots,n,$ the image of ${\un{v}_i}_|$ lies in $K(\un{u}_{i-1})$ (using the equation $\un{u}_{i-1}\un{v}_i=\un{u}_{i-1}\un{u}_i$ given by the assumption that \eqref{eqn:n-grpd-special} is an $n$-groupoid). Similarly, the equation $\un{v}_{i-1}\un{v}_i=\un{v}_{i-1}\un{u}_i$ guarantees that ${\un{v}_{i-1}}_|{\un{v}_i}_|=0.$ Moreover, if $f_0$ is surjective, so is $f_i$ for $i=1,\dots,n,$ and each $K(\un{u}_i)$ is a $\un{Y}$-module (cf.~\eqref{eqn:is_mod}).
Then, as proven above for the case $n=1$, an aspherical $n$-groupoid $\mathbb{X}$ in $\mathrm{Mal}\big(Sl(\mathbf{Cat}_B/\un{Y})\big),$ as in \eqref{eqn:n-grpd-special}, with $n$-direction $d_{n,\un{Y}}(\mathbb{X})=A\in\mathrm{Mod}_{\un{Y}},$ corresponds to an $n$-crossed extension
\begin{equation*}
 \xymatrix{{\un{A}^+} \ar@{>->}[r] &{K(\un{u}_n)} \ar[r]^-{{\un{v}_n}_|} &{K(\un{u}_{n-1})} \ar[r] &{\dots} \ar[r] &{K(\un{u}_1)}  \ar[r]^-{{\un{v}_1}_|} &{\un{X}_0} \ar@{->>}[r]^-{\un{f}_0} &{\un{Y}} }
 \end{equation*}
 (i.e., to a $n$-fold extension of $\un{Y}$ by $A,$ in the sense of \cite{golasinski}), and by \cite{golasinski}, Theorem 1.4, we have that $\mathrm{Opext}^{n+1}(\un{Y},A)=\pi_0\big(d_{n,\un{Y}}^{-1}(A)\big)\cong H^{n+2}(\un{Y},A).$

\subsection{A remark on crossed extensions in $\mathbf{Cat}_B$}
Let us call \emph{crossed extension} in $\mathbf{Cat}_B$ a diagram
\begin{equation} \label{crossed extension}
\xymatrixrowsep{1pc}
\xymatrixcolsep{1.7pc}
\xymatrix{{\un{A}} \ar@{>->}[r]^{\un{j}} &{\un{Z}} \ar@{->>}[rd]_{\un{\beta}} \ar[rr]^-{\un{\delta}} &\ &{\un{X}} \ar@{->>}[r]^-{\un{f}} &{\un{Y},} \\
&\ &{K(\un{f})} \ar@{>->}[ru]_{\un{\gamma}} &\ &\
}
\end{equation}
in which $\un{\delta}$, together with an action of $\un{X}$ on $\un{Z}$, is a crossed module, $(\un{\beta}, \un{\gamma})$ is the (regular epimorphism, monomorphism) factorization of $\un{\delta}$, $(\un{A},\un{j})$ is a kernel of $\un{\delta}$ in the sense of \eqref{eqn:K_f}, and the sequence
\[ \xymatrix{ {K(\un{f})} \ar@{>->}[r]^-{\un{\gamma}} & {\un{X}} \ar@{->>}[r]^-{\un{f}} &{\un{Y}} } \]
is a special Schreier extension. We want to show that every crossed extension is equivalent to one in which ${\un{Z}}$ is an abelian groupoid. By equivalent we mean that there exists a zig-zag of morphisms of crossed extensions between the two. A morphism of crossed extensions is a commutative diagram
\[ \xymatrix{ \un{A} \ar@{=}[d] \ar[r]^{\un{j'}} &{\un{Z'}} \ar[d]_{\un{\xi}} \ar[r]^-{\un{\delta'}} & \un{X'} \ar[d]^{\un{\tau}} \ar[r]^{\un{f'}} & \un{Y'} \ar@{=}[d] \\
{\un{A}} \ar[r]_{\un{j}} &{\un{Z}} \ar[r]_-{\un{\delta}} &{\un{X}} \ar[r]_-{\un{f}} &{\un{Y}} } \]
such that the pair $\un{\xi}, \un{\tau}$ is a morphism of crossed modules, as in Definition \ref{def:mor_of_xsmod}.\\

Let \eqref{crossed extension} be a crossed extension. The morphism $\un{f}$ being a regular epimorphism in $\mathbf{Cat}_B$, it admits a section in the category of graphs over $B$. Fixing such a section $\un{s}$, the fact that $\un{f}$ is special Schreier gives us a map
\[ \varphi \colon Y \times_B Y \to K(\un{f}) \]
such that, for all $y_1\colon b_1\to b_2$ and $y_2\colon b_2\to b_3$, $\varphi(y_2,y_1)\in K(f)(b_3,b_3)$ satisfies
\begin{equation} \label{eq:def_varphi}
	\gamma(\varphi(y_2,y_1)) \cdot s(y_2\cdot y_1) = s(y_2)\cdot s(y_1).
\end{equation}
Moreover
\[\varphi(y,1_b) = 1_{b'} = \varphi(1_{b'},y)\]
for all $y\colon b\to b'$ in $Y$. For every composable triple $y_1, y_2, y_3 \in Y$ we can compute $s(y_3) \cdot s(y_2) \cdot s(y_1)$ in two possible ways:
\begin{align*}
	(s(y_3)\cdot s(y_2)) \cdot s(y_1) & = \gamma(\varphi(y_3,y_2))\cdot s(y_3\cdot y_2) \cdot s(y_1)\\
	& = \gamma(\varphi(y_3,y_2))\cdot  \gamma(\varphi(y_3 y_2,y_1)) \cdot s(y_3\cdot y_2 \cdot y_1)
\end{align*}
and
\begin{align*}
	s(y_3)\cdot (s(y_2)\cdot s(y_1)) & = s(y_3)\cdot \gamma(\varphi(y_2,y_1))\cdot s(y_2\cdot y_1)\\
	& = \gamma({}^{s(y_3)}\varphi(y_2,y_1))\cdot s(y_3) \cdot s(y_2y_1) \\
	& = \gamma({}^{s(y_3)}\varphi(y_2,y_1)) \cdot \gamma(\varphi(y_3,y_2y_1)) \cdot s(y_3\cdot y_2 \cdot y_1),
\end{align*}
where by ${}^{s(y_3)}\varphi(y_2,y_1)$ we denote the action of $\un{Y}$ on $K(\un{f})$ determined by the special Schreier extension $\un{f}$.

Using associativity, we get
\begin{equation} \label{eq:id_varphi}
	\varphi (y_3,y_2)\cdot \varphi (y_3y_2,y_1) = {}^{s(y_3)}\varphi(y_2,y_1) \cdot\varphi (y_3,y_2y_1).
\end{equation}

The surjectivity of $\un{\beta}$ gives a map $\psi \colon Y \times_B Y \to Z$ such that $ \beta \psi = \phi$,
\[\psi(y,1_b) = 1_{b'} = \psi(1_{b'},y)\]
for all $y\colon b\to b'$ in $Y$ and, for every composable triple $y_1, y_2, y_3 \in Y,$ there exists a unique $\zeta(y_3, y_2, y_1) \in A$ such that
\begin{equation} \label{eq:def_zeta}
	j(\zeta (y_3, y_2, y_1)) \cdot \psi (y_3,y_2) \cdot \psi (y_3y_2,y_1) = {}^{s(y_3)}\psi(y_2,y_1) \cdot \psi (y_3,y_2y_1).
\end{equation}
This defines a map
\[ \zeta \colon Y \times_B Y \times_B Y \to A \]
which satisfies the following identity for every chain $y_1, y_2, y_3, y_4$ of composable arrows:
\begin{equation} \label{eq:cocycle}
	\zeta(y_4y_3,y_2,y_1) + \zeta(y_4,y_3,y_2y_1) = {}^{y_4}\zeta(y_3,y_2,y_1) + \zeta (y_4,y_3y_2,y_1) + \zeta(y_4,y_3,y_2).
\end{equation}
To show this, we compute in two ways the expression
\[ \diamond = {}^{s(y_4)s(y_3)}\psi(y_2,y_1) \cdot {}^{s(y_4)}\psi(y_3,y_2y_1) \cdot \psi(y_4,y_3y_2y_1). \]
On one hand, using \eqref{eq:def_zeta} repeatedly, we have
\begin{align*}
	\diamond & = j(\zeta (y_4, y_3, y_2y_1)) \cdot {}^{s(y_4)s(y_3)}\psi(y_2,y_1) \cdot \psi (y_4,y_3) \cdot \psi (y_4y_3,y_2y_1) \\
	& =  j(\zeta (y_4, y_3, y_2y_1)) \cdot  {}^{\partial(\psi(y_4,y_3))s(y_4y_3)}\psi(y_2,y_1) \cdot \psi (y_4,y_3) \cdot \psi (y_4y_3,y_2y_1) \\
	& = j(\zeta (y_4, y_3, y_2y_1)) \cdot \psi(y_4,y_3)\cdot {}^{s(y_4y_3)} \psi(y_2,y_1) \cdot \psi (y_4y_3,y_2y_1) \\
	& = j(\zeta(y_4y_3,y_2,y_1) + \zeta (y_4, y_3, y_2y_1)) \cdot \psi(y_4,y_3) \cdot \psi(y_4y_3,y_2) \cdot \psi(y_4y_3y_2,y_1),
\end{align*}
while, on the other hand, we have
\begin{align*}
	\diamond & = {}^{s(y_4)} ({}^{s(y_3)} \psi(y_2,y_1) \cdot \psi(y_3,y_2y_1)) \cdot \psi(y_4,y_3y_2y_1) \\
	& = {}^{s(y_4)} (j(\zeta(y_3,y_2,y_1)) \cdot \psi(y_3,y_2) \cdot \psi(y_3y_2,y_1)) \cdot \psi(y_4,y_3y_2y_1) \\
	& = j({}^{y_4}\zeta(y_3,y_2,y_1)) \cdot {}^{s(y_4)} \psi(y_3,y_2) \cdot {}^{s(y_4)} \psi(y_3y_2,y_1) \cdot \psi(y_4,y_3y_2y_1) \\
	& = j({}^{y_4}\zeta(y_3,y_2,y_1) + \zeta(y_4,y_3y_2,y_1)) \cdot {}^{s(y_)4} \psi(y_3,y_2) \cdot \psi(y_4,y_3y_2) \cdot \psi(y_4y_3y_2,y_1) \\
	& = j({}^{y_4}\zeta(y_3,y_2,y_1) + \zeta(y_4,y_3y_2,y_1) + \zeta(y_4,y_3,y_2)) \cdot \psi(y_4,y_3) \cdot \psi(y_4y_3,y_2) \cdot \psi(y_4y_3y_2,y_1).
\end{align*}

Let then $\un{F}$ be the totally disconnected groupoid over $B$ such that, for $b_3 \in B$, $F(b_3, b_3)$ is the free group generated by pairs $[y_2,y_1]$ of arrows $y_1\colon b_1\to b_2$ and $y_2\colon b_2\to b_3$ of $Y$, with the convention that $[y_2,y_1]$ is the empty word if either $y_1$ or $y_2$ is an identity. For $y_3 \colon b_3 \to b_4$, let us define
\begin{equation} \label{eq:def_act}
	^{y_3}[y_2,y_1] = [y_3,y_2] \cdot [y_3y_2,y_1] \cdot [y_3,y_2y_1]^{-1}
\end{equation}
and let us extend it freely, in order to get a group homomorphism ${}^{y} (\_ )$ for all $y \in Y$. We observe that for all $\xymatrix{ b_1 \ar[r]^{y_1} & b_2 \ar[r]^{y_2} & b_3 \ar[r]^{y_3} & b_4 \ar[r]^{y_4} & b_5 }$, we have
\begin{align*}
	^{y_4} ({ ^{y_3} [y_2,y_1]}) = & {}^{y_4} ([y_3,y_2] \cdot [y_3y_2,y_1] \cdot [y_3,y_2y_1]^{-1}) \\
	= & [y_4,y_3] \cdot [y_4y_3,y_2] \cdot [y_4,y_3y_2]^{-1}\\
	& \cdot [y_4,y_3y_2] \cdot [y_4y_3y_2,y_1] \cdot [y_4,y_3y_2y_1]^{-1} \\
	& \cdot ([y_4,y_3] \cdot [y_4y_3,y_2y_1] \cdot [y_4,y_3y_2y_1]^{-1})^{-1}\\
	= &  [y_4,y_3] \cdot [y_4y_3,y_2] \cdot [y_4,y_3y_2]^{-1}\\
	& \cdot [y_4,y_3y_2] \cdot [y_4y_3y_2,y_1] \cdot [y_4,y_3y_2y_1]^{-1} \\
	& \cdot [y_4,y_3y_2y_1]  \cdot  [y_4y_3,y_2y_1]^{-1} \cdot  [y_4,y_3]^{-1} \\
	= &  [y_4,y_3] \cdot [y_4y_3,y_2] \cdot [y_4y_3y_2,y_1] \cdot [y_4y_3,y_2y_1]^{-1} \cdot  [y_4,y_3]^{-1},
\end{align*}
which implies that
\begin{equation} \label{eq:act_CF}
	[y',y] \cdot {}^{y'\cdot y} w = {}^{y'}({}^{y} w) \cdot [y',y]
\end{equation}
for all $w\in F(b_1, b_1)$ and all composable $y, y' \in Y.$ We define $\widehat{\varphi} \colon F \to K(f)$ as the unique morphism of groups such that $\widehat{\varphi}([y_2,y_1]) = \varphi(y_2,y_1)$. We similarly define $\widehat{\psi} \colon F \to X $ such that $\widehat{\psi}([y_2,y_1])= \psi (y_2,y_1),$ and $\widehat{\zeta}\colon Y\times_B F\to A$ in such a way that for all $y_3\colon b_3\to b_4$ in $Y,$ $v\mapsto\zeta(y_3,v)$ is the unique morphism of groups $F(b_3, b_3) \to F(b_4, b_4)$ such that $\widehat{\zeta} (y_3,[y_2,y_1]) = \zeta(y_3,y_2,y_1)$ for all $\xymatrix{ b_1 \ar[r]^{y_1} & b_2 \ar[r]^{y_2} & b_3. }$ Note that for all $y,w$, we have
\[\widehat{\varphi}({}^{y}w) = {}^{s(y)}\widehat{\varphi}(w),\]
since this holds by construction whenever $w=[y_2,y_1],$ and both sides define group morphisms on the connected components of $\un{F}$ (when $y$ is fixed).

Next, we define $\un{\widehat{X}}$ so that $\widehat{X}(b_1,b_2)=F(b_2, b_2) \times Y(b_1,b_2),$ with composition defined by
\[(w_2,y_2)\cdot (w_1,y_1) = (w_2({}^{y_2} w_1)[y_2,y_1] , y_2y_1). \]
The definition of ${}^y (\_ )$ ensures that this composition is associative. Moreover we define $\un{\widehat{f}}\colon \un{\widehat{X}} \to \un{Y}$ as the projection. We then have a commutative diagram
\begin{equation} \label{first part zigzag}
\xymatrix{ \un{A} \ar@{=}[d] \ar[r]^-{\langle 0, 1 \rangle} & \un{F} \times \un{A} \ar[d]_{\un{\xi}} \ar[r]^-{\un{\widehat{\delta}}} & \un{\widehat{X}} \ar[d]^{\un{\tau}} \ar[r]^{\un{\widehat{f}}} & \un{Y} \ar@{=}[d] \\
{\un{A}} \ar[r]_{\un{j}} &{\un{Z}} \ar[r]_-{\un{\delta}} &{\un{X}} \ar[r]_-{\un{f}} &{\un{Y},} }
\end{equation}
where $\widehat{\delta}(w,a) = (w,1)$, $\tau(w,y) = \gamma(\widehat{\varphi}(w))\cdot s(y)$ and $\xi (w,a) = \widehat{\psi}(w)\cdot j(a)$. Let us check that $\un{\tau}$ is indeed a morphism: we have
\begin{align*}
	\tau((w_2,y_2)\cdot (w_1,y_1)) & = \tau(w_2({}^{y_2} w_1)[y_2,y_1] , y_2y_1) = \gamma(\widehat{\varphi} (w_2({}^{y_2} w_1)[y_2,y_1]))\cdot s(y_2y_1)\\
	& = \gamma(\widehat{\varphi}(w_2)) \gamma(\widehat{\varphi}({}^{y_2} w_1)) \gamma(\varphi(y_2,y_1))\cdot s(y_2y_1)\\
	& = \gamma(\widehat{\varphi}(w_2))  \gamma(\widehat{\varphi}({}^{y_2} w_1)) s(y_2) s(y_1) \\
	& = \gamma(\widehat{\varphi}(w_2))  \gamma({}^{s(y_2)}\widehat{\varphi}( w_1)) s(y_2) s(y_1) \\
	& = \gamma(\widehat{\varphi}(w_2)) s(y_2) \gamma(\widehat{\varphi}( w_1)) s(y_1)\\
	& = \tau(w_2,y_2)\cdot \tau(w_1,y_1).
\end{align*}

Now we define an action of $\un{\widehat{X}}$ on the totally disconnected groupoid $\un{F} \times \un{A}$ by putting
\[{}^{(w,y)} (v,a) = (w{}^y vw^{-1}, \widehat{\zeta}(y,v) + {}^{y}a).\]
In order to show that this defines an action, the only non trivial equation to check is
\[{}^{(w_2,y_2)\cdot (w_1,y_1)} (v,a) = {}^{(w_2,y_2)} \Big({}^{(w_1,y_1)} (v,a)\Big).\]
The first term is
\[{}^{(w_2{}^{y_2}w_1 [y_2,y_1],y_2y_1)} (v,a) = (w_2{}^{y_2}w_1 [y_2,y_1]  {}^{y_2y_1}v (w_2{}^{y_2}w_1 [y_2,y_1])^{-1} , \widehat{\zeta} (y_2y_1,v) + {}^{y_2y_1}a)\]
while the second term is
\begin{gather*}
	{}^{(w_2,y_2)} (w_1{}^{y_1}v w_1^{-1}, \widehat{\zeta}(y_1,v) + {}^{y_1}a) \\
	= \big(w_2 {}^{y_2} w_1 {}^{y_2}({}^{y_1}v) {}^{y_2}(w_1^{-1}) w_2^{-1} , \widehat{\zeta} (y_2, w_1{}^{y_1}v w_1^{-1} ) +{}^{y_2} (\widehat{\zeta}(y_1,v) + {}^{y_1}a)\big).
\end{gather*}
By comparing the two, and since $\un{Y}$ acts on $\un{A}$ and $\un{A}$ is abelian, we conclude that it suffices to check the equality
\[\widehat{\zeta} (y_2y_1,v) = \widehat{\zeta} (y_2,{}^{y_1}v) + {}^{y_2}\widehat{\zeta} (y_1,v).\]
Actually, it is enough to check this identity on the generators $v=[z_2,z_1]$ of $F(b_1,b_1),$ where $b_1$ is the domain of $y_1.$ We have
\begin{align*}
	\zeta(y_2y_1,z_2,z_1) & = \widehat{\zeta}(y_2,[y_1,z_2] \cdot [y_1z_2,z_1] \cdot [y_1,z_2z_1]^{-1}) + {}^{y_2} \zeta(y_1,z_2,z_1)\\
	& = \zeta(y_2,y_1,z_2) + \zeta(y_2,y_1z_2,z_1) - \zeta (y_2,y_1,z_2z_1) + {}^{y_2} \zeta(y_1,z_2,z_1),
\end{align*}
which is equivalent to the identity \eqref{eq:cocycle}.

The morphism $\un{\widehat{\delta}},$ with the above defined action, is a crossed module. Indeed the first condition for crossed modules corresponds to
\[\widehat{\delta} (w {}^{y}v v^{-1}, \widehat{\zeta}(y,v) + {}^{y}a) (w,y) = (w,y)\widehat{\delta}(v,a),\]
which amounts to
\[w^{y}v w^{-1}w = w{}^{y}v,\]
which is clearly true. The second condition is satisfied as well, since when $y=1$ we get
\[{}^{(w,1)} (v,a) = (w vw^{-1},a) = (w,1)(v,a)(w,1)^{-1}.\]
Let us now show that $(\un{\xi}, \un{\tau})$ is a morphism of crossed modules. The equivariance with respect to the action amounts to the condition
\[\xi({}^{(w,y)} (v,a)) = {}^{\tau (w,y)} \xi(v,a),\]
which is the same as
\[\widehat{\psi} (w{}^y vw^{-1}) \cdot j (\widehat{\zeta}(y,v) + {}^{y}a) = {}^{\gamma(\widehat{\varphi}(w))s(y)} \big(\widehat{\psi}(v) \cdot j(a)\big),\]
or equivalently
\[\widehat{\psi} (w) \widehat{\psi}({}^y v ) \widehat{\psi}(w^{-1}) \cdot j (\widehat{\zeta}(y,v) + {}^{y}a) = {}^{\partial(\widehat{\psi}(w))s(y)} (\widehat{\psi}(v) \cdot j(a) ) = \widehat{\psi} (w) {}^{s(y)}\widehat{\psi}(v) \widehat{\psi}(w^{-1}) \cdot j ({}^{y}a),\]
so it suffices to check that
\[\widehat{\psi}({}^y v ) \cdot j (\widehat{\zeta}(y,v)) = {}^{s(y)}\widehat{\psi}(v).\]
Checking this on a generator $v=[y_2,y_1]$ is equivalent to checking that
\[\psi(y,y_2) \cdot \psi (yy_2,y_1) \cdot \psi(y,y_2y_1)^{-1} \cdot j(\zeta(y,y_2,y_1)) = {}^{s(y)} \psi(y_2,y_1),\]
which holds by definition of $\zeta.$ So we conclude that \eqref{first part zigzag} is a morphism of crossed extensions.

Considering then the groupoid $\un{G}$ obtained from $\un{F}$ by taking the abelianization of all the endogroups of $\un{F}$, and the category $\un{\tilde{X}}$ obtained from $\un{\widehat{X}}$ by replacing $F$ with $G$, we end up with the following zig-zag of crossed extensions:
\[ \xymatrix{ \un{A} \ar[r]^-{\langle 0, 1 \rangle} & \un{G} \times \un{A} \ar[r]^-{\un{\tilde{\delta}}} & \un{\tilde{X}} \ar[r]^{\un{\tilde{f}}} & \un{Y} \\
\un{A} \ar@{=}[d] \ar@{=}[u] \ar[r]^-{\langle 0, 1 \rangle} & \un{F} \times \un{A} \ar[d]_{\un{\xi}} \ar[u]^{\un{\pi} \times 1} \ar[r]^-{\un{\widehat{\delta}}} & \un{\widehat{X}} \ar[d]^{\un{\tau}} \ar[u]_{\un{\tilde{\pi}}} \ar[r]^{\un{\widehat{f}}} & \un{Y} \ar@{=}[d] \ar@{=}[u] \\
{\un{A}} \ar[r]_{\un{j}} &{\un{Z}} \ar[r]_-{\un{\delta}} &{\un{X}} \ar[r]_-{\un{f}} &{\un{Y},}
} \]
where $\un{\pi}$ and $\un{\tilde{\pi}}$ are the quotient projections, and $\un{\tilde{\delta}}$ and $\un{\tilde{f}}$ are obtained by the universal property of the quotients.

\section*{Acknowledgement}
The first and third authors are members of the Gruppo Nazionale per le Strutture Algebriche, Geometriche e le loro Applicazioni (GNSAGA) dell'Istituto Nazionale di Alta Matematica ``Francesco Severi''.

The second author is a Postdoctoral researcher of the Fonds de la Recherche Scientifique-FNRS.

This work was supported by the Shota Rustaveli National Science Foundation of Georgia (SRNSFG), through grant FR-24-9660, ``Categorical methods for the study of cohomology theory of monoid-like structures: an approach through Schreier extensions''.


\end{document}